\numberwithin{equation}{section}
\title[Stationary solutions around local maxima]{Stationary solutions of the nonlinear Schr\"{o}dinger equation 
with fast-decay potentials concentrating around local maxima}
\author{Jonathan Di Cosmo}
\address{
Universit{\'e} catholique de Louvain\\
Institut de Recherche en Math\'ematique et en Physique\\
Chemin du Cyclotron 2 bte L7.01.01\\
1348 Louvain-la-Neuve\\
Belgium}
\address{
  D{\'e}partement de Math{\'e}matique\\
  Universit{\'e} libre de Bruxelles, CP 214\\
  Boulevard du Triomphe, 1050 Bruxelles, Belgium}
\email{Jonathan.DiCosmo@uclouvain.be}
\thanks{Jonathan Di Cosmo is a research fellow of the Fonds de la Recherche Scientifique--FNRS}
\author{Jean Van Schaftingen}
\address{
Universit{\'e} catholique de Louvain\\
Institut de Recherche en Math\'ematique et en Physique\\
Chemin du Cyclotron 2 bte L7.01.01\\
1348 Louvain-la-Neuve\\
Belgium}
\email{Jean.VanSchaftingen@uclouvain.be}
\date{\today}
\newcommand{\R}{\mathbb{R}}
\newcommand{\N}{\mathbb{N}}
\newcommand{\norm}[1]{\left\| #1 \right\|}
\newcommand{\abs}[1]{\left| #1 \right|}
\newcommand{\Bigabs}[1]{\Bigl\lvert #1 \Bigr\rvert}
\newcommand{\dualprod}[2]{\langle #1, #2 \rangle}
\newcommand{\st}{\; \mid\; }
\DeclareMathOperator{\supp}{supp}
\DeclareMathOperator{\dist}{dist} 
\newtheorem{theorem}{Theorem}
\newtheorem{proposition}{Proposition}[section]
\newtheorem{lemma}[proposition]{Lemma}
\keywords{Stationary nonlinear Schr\"odinger equation; semiclassical states; semilinear elliptic problem; vanishing potential; critical frequency; concentration around local maxima}
\subjclass[2010]{35J65 (35B05, 35B25, 35B40, 35J20, 35Q55)}
\begin{document} 

\begin{abstract}
 We study positive bound states for the equation
\begin{equation*}
- \varepsilon^2 \Delta u + Vu = u^p, \qquad  \text{in \(\R^N\)},
\end{equation*}
where \(\varepsilon > 0\) is a real parameter, \(\frac{N}{N-2} < p < \frac{N+2}{N-2}\) and \(V\) is a nonnegative
potential. Using purely variational techniques, we find solutions which concentrate at local maxima of the potential \(V\) without any restriction on the potential.
\end{abstract}

\maketitle

% \tableofcontents

\section{Introduction}
We consider the stationary nonlinear Schr\"{o}dinger equation
\begin{equation}
   \label{problemNLSE}
\tag{\protect{$\mathcal{P}_\varepsilon$}}
\left\{
\begin{aligned}
 - \varepsilon^2 \Delta u + Vu &= u^p, & & \text{in \(\R^N\)},\\
  u(x) & \to 0 &&\text{as \(\abs{x} \to \infty\)},
\end{aligned}
\right.
\end{equation}
where \(\varepsilon > 0\) is a real parameter, \(N\geq 3\), \(\frac{N}{N-2} < p <
\frac{N+2}{N-2}\) and 
\(V \in C(\R^N,\R^+)\).

In the semi-classical limit when \(\varepsilon\) is small, one expects quantum physics to be approximated by classical physics and thus the stationary solutions should concentrate around critical points of the potential. 
A first way to construct such a family of solutions around a \emph{nondegenerate} critical point of the potential is the Lyapunov--Schmidt reduction 
\cites{AmbrosettiBadialeCingolani1996,AmbrosettiBadialeCingolani1997,AmbrosettiMalchiodi2006,FloerWeinstein1986,Oh1988,Oh1988Errata,Oh1990}. 
Solutions of \eqref{problemNLSE} can also be found by variational methods. The most natural method yields solutions concentrating around a \emph{global minimum} of the potential \(V\) \citelist{\cite{Rabinowitz1992}\cite{Wang1993}}. More elaborate critical constructions allow to construct solutions concentrating around \emph{strict local minima} \cites{delPinoFelmer1996,delPinoFelmer1998} and around \emph{strict local maxima} \cites{delPinoFelmer1997,delPinoFelmer2002}.

All the works mentioned above are concerned with \emph{subcritical frequency} case \(\inf_{\R^N} V > 0\). 
In the \emph{critical frequency case} \(\inf_{\R^N} V = 0\), solutions concentrating around nondegenerate critical points \cite{AmbrosettiMalchiodiRuiz2006} and around local minima have been obtained \cites{AmbrosettiFelliMalchiodi2005,BonheureVanSchaftingen2008} provided that the potential \(V\) does not decay too fast at infinity.
In the case of local minima, the variational method has been adapted to construct solutions concentrating around a local minimum with a fast decay potential \(V\) --- including a compactly supported potential \citelist{\cite{MorozVanSchaftingen2009}\cite{MorozVanSchaftingen2010}\cite{YinZhang2009}\cite{BaDengPeng2010}}.

The goal of this work is to establish by a variational method the existence of solutions concentrating around local maxima for fast-decaying potentials. Since any potential that decays at infinity has a global maximum, this shows the existence of solutions for a quite general class of potentials. We also think that this problem is a good test of the robustness and flexibility of the variational methods for solutions concentrating around local maxima and of the penalization method for fast decay potentials.

\medbreak

Our main result is the following

\begin{theorem}
\label{theoremMainSmooth}
Let \(N \geq 1\), \(p > 1\) such that \( \frac{1}{p} > \frac{N - 2}{N + 2}\) and \(V \in C^N(\R^{N}, \R^+)\), \(V \not\equiv 0\) be a nonnegative potential.
If 
\(
 \lim_{\abs{x}\to\infty} V(x) = 0 
\)
and either \(\frac{1}{p} < \frac{N - 2}{N}\) or \(\liminf_{\abs{x} \to \infty} V (x) \abs{x}^2 > 0 \), then, for \(\varepsilon>0\) small enough, the problem~\eqref{problemNLSE}
has a family of positive solution that concentrates around a global maximum of \(V\). 
\end{theorem}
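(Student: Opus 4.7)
The plan is to combine the variational approach of del Pino and Felmer for concentration at local maxima with the penalization-in-weighted-spaces framework of Moroz and Van Schaftingen for fast-decay potentials. I would fix $x_0 \in \R^N$ at which $V$ attains its global maximum $V_{\max}$, and let $U$ denote the unique positive radial ground state of the limit problem $-\Delta U + V_{\max} U = U^p$ in $\R^N$, with mountain-pass energy $c_\infty$. After the translation-scaling $v(y) := u(\varepsilon y + x_0)$ the equation becomes $-\Delta v + V_\varepsilon v = v^p$ with $V_\varepsilon(y) := V(\varepsilon y + x_0)$, and the natural energy is
\[
 J_\varepsilon(v) = \tfrac{1}{2}\int_{\R^N} (\abs{\nabla v}^2 + V_\varepsilon v^2) - \tfrac{1}{p+1}\int_{\R^N} v_+^{p+1},
\]
defined on the weighted space $X_\varepsilon := \mathcal{D}^{1,2}(\R^N) \cap L^2(\R^N, V_\varepsilon\, dy)$, which is \emph{not} the standard $H^1$ in the fast-decay regime.

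The first block of work is to set up the functional framework so that $J_\varepsilon$ has a mountain-pass geometry on $X_\varepsilon$. The condition $1/p > (N-2)/(N+2)$ makes $p+1$ Sobolev-subcritical, and the dichotomy in the hypotheses --- either $p > N/(N-2)$, in which case the Sobolev embedding of $\mathcal{D}^{1,2}$ alone controls $\int v_+^{p+1}$, or $V(x)\abs{x}^2 \gtrsim 1$ at infinity, which yields a Hardy-weighted interpolation inequality --- is exactly what is needed to prevent mass from escaping to infinity and to make the nonlinear term continuous on $X_\varepsilon$.

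Next, following del Pino and Felmer, I would penalize the equation outside a fixed bounded neighborhood $\Lambda$ of the origin by replacing $v^p$ with $\min(v^p, k v)$ for some $0 < k < V_{\max}$, obtaining a modified functional $\tilde J_\varepsilon$ that satisfies a suitable Palais--Smale condition on bounded sets. To force concentration at $x_0$ rather than at some spurious point, I would work with a min-max class of paths passing through the truncated ground state $U_\varepsilon(y) := \chi(\varepsilon y) U(y)$; a direct comparison with $U$ shows that the mountain-pass level $c_\varepsilon$ converges to $c_\infty$ as $\varepsilon \to 0$. Lions-style concentration-compactness in $X_\varepsilon$ then produces a critical point $v_\varepsilon$ of $\tilde J_\varepsilon$ at level $c_\varepsilon$, which after a small translation converges to $U$ in $\mathcal{D}^{1,2}(\R^N)$.

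The hard part, and the step where the hypotheses on decay genuinely bite, will be removing the penalization: one must show that $v_\varepsilon(y) \le k^{1/(p-1)}$ outside $\Lambda$ so that $v_\varepsilon$ actually solves \eqref{problemNLSE}. The exponential barriers available when $\inf V > 0$ are useless here because $V_\varepsilon$ is essentially zero on the tail. This is precisely where the dichotomy enters: when $p > N/(N-2)$, a Moser iteration applied to $-\Delta v_\varepsilon \le v_\varepsilon^p$ together with the $\mathcal{D}^{1,2}$ bound produces a pointwise decay $v_\varepsilon(y) \lesssim \abs{y - y_\varepsilon}^{-(N-2)}$; otherwise the lower bound $V(x)\abs{x}^2 \gtrsim 1$ furnishes a Hardy-type supersolution $c\abs{y}^{-(N-2)}$ through the weak maximum principle. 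Either alternative forces the penalty to be inactive for $\varepsilon$ small, and reversing the scaling yields the announced family of positive solutions concentrating at the global maximum~$x_0$.
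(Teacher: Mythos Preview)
There is a genuine gap in the minimax construction. A one-dimensional mountain pass (equivalently, the Nehari infimum) for the penalized functional selects the \emph{lowest} critical energy, and that level is asymptotic to \(\inf_{y\in\Lambda} b_{V_\varepsilon(y)}\), not to \(b_{V_{\max}}\). Since \(b_\nu\) is increasing in \(\nu\), the minimizing peak drifts to the point of \(\Lambda\) where \(V_\varepsilon\) is smallest; in your rescaled picture that point lies on \(\partial B_R\) (because \(x_0\) is a maximum of \(V\)). Consequently the profile \(v_\varepsilon\) is of order one right on \(\partial B_R\), and the inequality \(v_\varepsilon^{p-1}\le k\) that you need to deactivate the truncation fails in a collar \(\{R<\lvert y\rvert<R+C\}\); neither Moser iteration nor a Hardy barrier helps there, since both give decay only away from the peak. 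In short, nothing in your scheme forces the peak to sit strictly inside the penalization region, and without that the penalized solution does not solve \eqref{problemNLSE}.

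The paper handles exactly this obstruction by replacing the one-dimensional mountain pass with an \(N\)-dimensional minimax: paths are continuous maps \(\gamma:\overline{\Lambda}_\varepsilon\to\mathcal{N}_\varepsilon\) pinned on \(\partial\Lambda_\varepsilon\) to bumps \(w_{\varepsilon,y}\) centred at \(y\). A barycenter map \(\beta\) and a degree argument show that every such \(\gamma\) must hit a function whose barycenter is a prescribed interior point, and Proposition~\ref{propositionAsymptoticsMoments} then forces the minimax level \(c_\varepsilon\) strictly above the boundary level \(a_\varepsilon\). It is this strict gap, together with the two-peak lower bound of Proposition~\ref{estim:inf2}, that yields \(\liminf_{\varepsilon\to 0}\dist(x_\varepsilon,\partial\Lambda)>0\) and makes the comparison with barriers legitimate. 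A secondary issue: the constant-\(k\) truncation \(\min(v^p,kv)\) you describe is the del Pino--Felmer penalty, which needs \(\inf V>0\); for decaying \(V\) one must use the Moroz--Van Schaftingen penalty \(\min\bigl(v^p,\mu(V+\varepsilon^2 H)v\bigr)\) with the Hardy-type weight \(H\), so that Lemma~\ref{lemmaPositivity} restores coercivity.
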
 

A typical new potential \(V\) for which this result applies is given by \(V(x) = \frac{1}{1+\abs{x}^4}\) for \(x \in \R^n\). The assumption on \(p\) is optimal, since when \(\frac{1}{p} \ge \frac{N-2}{N}\) and \(V\) is compactly supported, \eqref{problemNLSE} does not have any solution. 
Indeed, such a solution would be positive and satisfy \(-\Delta u = u^p\) in \(\R^N \setminus \supp V\) and that would imply \(u = 0\) on \(\R^N \setminus \supp V\) \cite{BV}. 
Theorem~\ref{theoremMainSmooth} follows from the following result:

\begin{theorem}\label{theoremMainLambda}
Let \(N \geq 1\), \(p > 1\) such that \( \frac{1}{p} > \frac{N - 2}{N + 2}\) and \(V \in C(\R^N,\R^+)\). 
Assume that there exists a smooth bounded open set \(\Lambda \subset \R^N\) such that 
\begin{equation*}
 \sup_{\Lambda} V > \inf_{\Lambda} V = \sup_{\partial\Lambda} V 
\end{equation*}
and
\begin{equation*}
  \sup_{\Lambda} V^{\frac{p+1}{p-1}-\frac{N}{2}} 
 < 2 \inf_{\Lambda} V^{\frac{p+1}{p-1}-\frac{N}{2}}\:.
\end{equation*}
If either \(\frac{1}{p} < \frac{N - 2}{N}\) or \(\liminf_{\abs{x} \to \infty} V (x) \abs{x}^2 > 0 \), then for \(\varepsilon > 0\) small enough, problem~\eqref{problemNLSE} possesses a positive weak
solution \(u_{\varepsilon} \in H^1_\mathrm{loc}(\R^N)\) such that
\(u_{\varepsilon}\) achieves its maximum at \(x_{\varepsilon} \in \Lambda\),
\begin{equation*}
 \liminf_{\varepsilon\to 0} u_{\varepsilon}(x_{\varepsilon}) > 0
\end{equation*}
and
\begin{equation*}
 \liminf_{\varepsilon\to 0} \dist(x_\varepsilon, \R^N \setminus \Lambda) > 0\:. 
\end{equation*}
\end{theorem}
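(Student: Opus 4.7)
My plan is to combine two penalization strategies already in the literature: the exterior penalization of del~Pino and Felmer, designed for concentration at local maxima in the subcritical-frequency case, with the weighted-space penalization introduced by Moroz and Van~Schaftingen for fast-decaying potentials at local minima. The rationale is that the former addresses the variational obstruction specific to concentration at a maximum, while the latter provides a coercive functional framework that survives when \(V\) may vanish or even have compact support.

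After the semiclassical rescaling \(v(y) = u(\varepsilon y)\), the equation becomes \(-\Delta v + V_\varepsilon v = v^p\) with \(V_\varepsilon(y) = V(\varepsilon y)\), and the relevant set \(\Lambda_\varepsilon = \Lambda/\varepsilon\) expands as \(\varepsilon \to 0\). I would fix an auxiliary exterior weight \(H_\varepsilon\) supported outside a slight enlargement of \(\Lambda_\varepsilon\) and construct a penalized nonlinearity \(g_\varepsilon(y,v)\) that coincides with \(v_+^p\) on \(\Lambda_\varepsilon\) and is truncated to a small linear term \(\kappa H_\varepsilon v_+\) outside. The choice of \(H_\varepsilon\) is dictated by the requirement that the functional
\begin{equation*}
 J_\varepsilon(v) = \tfrac{1}{2}\int_{\R^N}\bigl(\abs{\nabla v}^2 + V_\varepsilon v^2\bigr) - \int_{\R^N} G_\varepsilon(y,v)
\end{equation*}
be well-defined and coercive on a suitable weighted Sobolev space. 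In the regime \(1/p < (N-2)/N\), a weight \(H_\varepsilon\sim\abs{y}^{-2}\) far from \(\Lambda_\varepsilon\) is compatible with Hardy's inequality; when \(\liminf V(x)\abs{x}^2 > 0\), the potential itself plays this role at infinity.

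The next step is to produce a critical point of \(J_\varepsilon\) at a mountain-pass level and to estimate this level. A test function built from a ground state of the limit problem \(-\Delta w + v_0 w = w^p\), centered near a point where \(V\) is close to \(\sup_\Lambda V\), provides an upper bound of the order of the ground-state energy \(c(\sup_\Lambda V)\), which scales as \(V^{(p+1)/(p-1)-N/2}\). The assumption \(\sup_\Lambda V^{(p+1)/(p-1)-N/2} < 2\inf_\Lambda V^{(p+1)/(p-1)-N/2}\) then places the min-max level strictly below \(2 c(\inf_\Lambda V)\), precisely the threshold beyond which a Palais--Smale sequence could split into two profiles concentrating on \(\partial\Lambda\). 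A standard concentration-compactness analysis, together with elliptic regularity, shows that, up to translation, \(v_\varepsilon\) converges to a ground state of \(-\Delta w + V(x_0) w = w^p\) for some \(x_0\in\overline{\Lambda}\); the energy gap excludes \(x_0\in\partial\Lambda\), yielding both the positivity of \(\liminf u_\varepsilon(x_\varepsilon)\) and a uniform lower bound on \(\dist(x_\varepsilon, \R^N\setminus\Lambda)\).

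The last and, I expect, most delicate step is recovering a solution of the original equation from that of the penalized one, that is, verifying \(g_\varepsilon(y,v_\varepsilon) = v_\varepsilon^p\) on all of \(\R^N\). This reduces to the pointwise estimate \(v_\varepsilon^{p-1} \le \kappa H_\varepsilon\) outside \(\Lambda_\varepsilon\), and is where the decay hypotheses on \(V\) really enter: one constructs an explicit supersolution from \(H_\varepsilon\) (a Hardy-type profile when \(1/p < (N-2)/N\), or a suitable multiple of \(V\) in the regime \(\liminf V(x)\abs{x}^2 > 0\)) and combines the comparison principle with the uniform decay of \(v_\varepsilon\) coming from the weighted coercivity. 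The main obstacle is that the two decay regimes must be treated separately with distinct barrier constructions, and one has to reconcile the local-maximum min-max geometry with the non-standard function space forced upon us by the fast decay of \(V\).
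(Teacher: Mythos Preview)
Your penalization framework, weighted Sobolev space, and barrier/comparison strategy match the paper's scheme quite closely. The genuine gap is in the variational step. A standard mountain-pass (equivalently, the Nehari infimum) for the penalized functional has asymptotic level \(\varepsilon^{-N}c_\varepsilon \to \inf_\Lambda \mathcal{C} = \mathcal{C}\vert_{\partial\Lambda}\), and the resulting solution concentrates at a \emph{minimum} of \(V\) on \(\overline\Lambda\), which here lies on \(\partial\Lambda\). Your remark that a test function centered near a maximum of \(V\) gives an upper bound \(c(\sup_\Lambda V)\) is true but does not help: the mountain-pass infimum over all paths will still pick out the smaller value \(c(\inf_\Lambda V)\). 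Consequently there is no ``energy gap'' available to rule out \(x_0\in\partial\Lambda\); the argument you sketch for excluding boundary concentration is circular.

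What the paper does instead is a higher-dimensional minimax. The admissible class \(\Gamma_\varepsilon\) consists of continuous maps \(\gamma:\overline{\Lambda}_\varepsilon\to\mathcal{N}_\varepsilon\) (with \(\Lambda_\varepsilon\) a slight shrinking of \(\Lambda\)) satisfying \(\gamma(y)=w_{\varepsilon,y}\) for \(y\in\partial\Lambda_\varepsilon\), where \(w_{\varepsilon,y}\) is a truncated rescaled ground state centered at \(y\); one sets \(c_\varepsilon=\inf_{\gamma\in\Gamma_\varepsilon}\sup_{y\in\Lambda_\varepsilon}\mathcal{J}_\varepsilon(\gamma(y))\). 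A barycenter map \(\beta\) together with a topological degree argument shows that every such \(\gamma\) must contain a function with barycenter equal to any prescribed interior point of \(\Lambda\). Combined with a sharp analysis of almost-minimizers on \(\mathcal{N}_\varepsilon\) (Proposition~\ref{propositionAsymptoticsMoments}), this yields the strict lower bound \(\liminf_{\varepsilon\to 0}\varepsilon^{-N}c_\varepsilon>\inf_\Lambda\mathcal{C}=\sup_{\partial\Lambda}\mathcal{C}\). It is this strict lower bound, not the upper bound \(c_\varepsilon\le\varepsilon^N\sup_\Lambda\mathcal{C}\), that both makes the minimax nontrivial (\(c_\varepsilon>a_\varepsilon\)) and later excludes boundary concentration. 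The hypothesis \(\sup_\Lambda\mathcal{C}<2\inf_\Lambda\mathcal{C}\) is used only afterwards, exactly as you say, to forbid two-peak splitting and force a single bump inside \(\Lambda\).
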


The first assumption on \(V\) implies that
\[
  \sup_{\partial\Lambda} V =  \inf_{\partial\Lambda} V\:,
\]
that is, \(\partial \Lambda\) is a level line of \(V\).

Theorem~\ref{theoremMainSmooth} follows from Theorem~\ref{theoremMainLambda} by taking \(\Lambda_\delta = \{x \in \R^N \st V(x) > \sup V - \delta \}\) for \(\delta > 0\). By Sard's lemma, the set \(\Lambda_\delta\) is smooth for almost every \(\delta > 0\). One applies then Theorem~\ref{theoremMainLambda} and a diagonal argument.

Our method of proof is based on the penalization method \citelist{\cite{delPinoFelmer1996}\cite{delPinoFelmer1997}} adapted to decaying potentials \citelist{\cite{BonheureVanSchaftingen2008}\cite{MorozVanSchaftingen2010}}.
However the decay of the potentials requires us to take some extra care at several steps, especially when lower bounds on the energy of solutions are needed.

\medbreak

This paper is organized as follows. We first introduce a penalized problem (section~\ref{sectionPenalization}) and recall some properties of the associated limiting problem (section~\ref{sectionLimiting}). We then study the asymptotic behaviour of families of critical points (section~\ref{sectAsymptoticsCritical}) and minimizers (section~\ref{sectAsymptoticsMinimizers}) of the energy functional associated to \eqref{problemNLSE}. This allows us to define a minimax level and prove the existence of a solution to the penalized problem in section~\ref{sectionMinimax}. Finally in section~\ref{sectionOriginalProblem} we use the asymptotics and some comparison argument to show that when \(\varepsilon\) is small, our solutions of the penalized problem solve the original problem. Whereas the proof is written for \(N \ge 3\), we highlight in section~\ref{sectionSlow} how the proof can be adapted to the case \(N \le 2\).

\section{The penalized problem}
\label{sectionPenalization}
       
Following M.\thinspace del Pino and P.\thinspace Felmer \cite{delPinoFelmer1997}, we introduce a penalized problem. 
D.\thinspace Bonheure and J.\thinspace Van~Schaftingen \citelist{\cite{BonheureVanSchaftingen2008}\cite{BonheureVanSchaftingen2006}} have introduced a penalized problem for decaying potentials. The penalization for fast decay potentials is due to  V.\thinspace Moroz and J.\thinspace Van~Schaftingen \citelist{\cite{MorozVanSchaftingen2009}\cite{MorozVanSchaftingen2010}}. It was used by D.\thinspace Bonheure together with the authors to study solutions concentrating around spheres \cite{DCBonheureVanSchaftingen2008}. Another penalized problem was defined by Yin Huicheng and Zhang Pingzheng \cite{YinZhang2009} (see also Fei Mingwen and Yin Huicheng \cite{FY} and Ba Na, Deng Yinbin and Peng Shuangjie \cite{BaDengPeng2010}).
       
\subsection{The penalization potential}
Recall that \(\Lambda\) is a bounded domain.
Let  \(x_0 \in \Lambda\) and \(\rho > 0\) be such that
\(\overline{B(x_0,\rho)} \subset \Lambda\), and let
\(\chi_{\Lambda}\) denote the characteristic function of the set \(\Lambda\). For \(N \ge 3\), the penalization
potential \(H : \R^N \to \R\) is defined by
\begin{equation*}
 H(x) := \bigl(1-\chi_{\Lambda}(x)\bigr) \frac{(N-2)^2}{4\abs{x-x_0}^2} \biggl(\frac{\log \frac{\rho}{\rho_0}}{\log \frac{\abs{x-x_0}}{\rho_0}
}\biggr)^{1+\beta}
\end{equation*}
for some fixed \(\beta > 0 \) and \(\rho_0 \in (0, \rho)\).
       
Let us recall that the operator \(-\Delta - H\) satisfies a positivity principle \cite{MorozVanSchaftingen2010}*{Lemma 3.1}.
       
\begin{lemma}
\label{lemmaPositivity}
For every \(u \in C^\infty_c (\R^N)\), 
\begin{equation*}
 \int_{\R^N} \bigl( \abs{\nabla u}^2 - H \abs{u}^2 \bigr) \geq 0\:.
\end{equation*}
\end{lemma}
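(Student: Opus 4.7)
The plan is to deduce the statement directly from the classical Hardy inequality: for \(N \geq 3\) and every \(u \in C^{\infty}_c(\R^N)\),
\[
 \int_{\R^N} \abs{\nabla u}^2 \geq \frac{(N-2)^2}{4} \int_{\R^N} \frac{\abs{u}^2}{\abs{x-x_0}^2}\:.
\]
It therefore suffices to establish the pointwise bound \(H(x) \leq (N-2)^2/(4\abs{x-x_0}^2)\) on all of \(\R^N\) and then integrate against \(\abs{u}^2\).

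The pointwise bound is built into the definition of \(H\). Since \(\overline{B(x_0,\rho)} \subset \Lambda\), the support of \(H\) lies in \(\R^N \setminus \Lambda \subset \R^N \setminus \overline{B(x_0,\rho)}\), on which \(\abs{x-x_0} > \rho > \rho_0 > 0\). Consequently \(\log(\abs{x-x_0}/\rho_0) > \log(\rho/\rho_0) > 0\) there, the logarithmic factor in the definition of \(H\) is strictly less than \(1\), and
\[
 H(x) \leq \frac{(N-2)^2}{4\,\abs{x-x_0}^2}
\]
holds on \(\R^N \setminus \Lambda\) (and trivially on \(\Lambda\), where \(H \equiv 0\)). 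Integrating and combining with Hardy yields
\[
 \int_{\R^N} H\,\abs{u}^2 \leq \frac{(N-2)^2}{4} \int_{\R^N} \frac{\abs{u}^2}{\abs{x-x_0}^2} \leq \int_{\R^N} \abs{\nabla u}^2\:,
\]
which is the desired conclusion.

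Main obstacle: there is essentially none beyond invoking Hardy; the whole content of the lemma is the pointwise domination of \(H\) by the Hardy weight, which was engineered into \(H\) via the requirement \(\rho > \rho_0\). The extra factor \((\log(\rho/\rho_0)/\log(\abs{x-x_0}/\rho_0))^{1+\beta}\) in the definition of \(H\) is not needed for the positivity principle itself --- Hardy already suffices --- but produces \emph{strict} inequality that will provide slack for the compactness and coercivity arguments later in the paper, where the penalization is exploited. If one wished to avoid citing Hardy, an alternative route would be the Allegretto--Piepenbrink ground--state transform \(u = wv\) applied to the positive distributional supersolution \(w(x) := \min\bigl\{\rho^{-(N-2)/2},\, \abs{x-x_0}^{-(N-2)/2}\bigr\}\), which satisfies \(-\Delta w \geq Hw\) on \(\R^N\); this is the sort of argument behind the cited Lemma~3.1 of \cite{MorozVanSchaftingen2010}.
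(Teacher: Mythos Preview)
Your proof is correct and follows essentially the same route as the paper: both reduce the lemma to the classical Hardy inequality via the pointwise bound \(H(x) \le (N-2)^2/(4\abs{x-x_0}^2)\), which holds because the logarithmic factor is at most \(1\) on the support of \(H\). Your additional remarks on the role of the logarithmic factor and the alternative ground-state transform are accurate but go beyond what the paper records.
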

\begin{proof}
Since \(N \ge 3\), this follows from the classical Hardy inequality since for every \(x \in \R^N \setminus B(x_0, \rho)\), 
\[     
  H(x) \le \frac{(N-2)^2}{4\abs{x-x_0}^2}\:.\qedhere
\]     
\end{proof}
       
\subsection{The penalized nonlinearity}
Fix \(\mu \in (0,1)\). The penalized nonlinearity \(g_{\varepsilon}: \R^N \times \R \rightarrow \R\) is defined for \(x \in \R^N\) and \(s \in \R\) by
\begin{equation*}
  g_{\varepsilon}(x,s) := \chi_{\Lambda}(x) s_+{}^p + \bigl( 1-\chi_{\Lambda}(x) \bigr)
\min\bigl(\mu \bigl(\varepsilon^2 H(x) + V(x)\bigr), \abs{s}^{p-1} \bigr) s_+\:.
\end{equation*}
Also set  \(G_{\varepsilon}(x,s) := \int_0^s g_{\varepsilon}(x,\sigma)\, d\sigma\). The function \(g_{\varepsilon}\) is a Carath\'eodory function with the following properties :
\begin{itemize}
 \item[(\(g_1\))] \(g_{\varepsilon}(x,s) = o(s),\) as \(s \rightarrow 0\), uniformly on compact subsets of
\(\R^N\).
 \item[(\(g_2\))] for every \(x \in \R^N\) and \(s \in \R\),
\[
   g_{\varepsilon}(x,s) \le (s)_+^p,
\]
if moreover \(x \in \R^N \setminus \Lambda\), then 
\[
 g_{\varepsilon}(x,s) \leq \mu\bigl(\varepsilon^2 H(x)  + V(x) \bigr)s_+
\]
 \item[(\(g_3\))] for every \(s \in \R\), if \(x \in \Lambda\),
 \[
   (p+1) G_{\varepsilon}(x,s) \leq g_{\varepsilon}(x,s)s\:,
\]
and if \(x \not \in \Lambda\),
\[
   2 G_{\varepsilon}(x,s) \leq g_{\varepsilon}(x,s)s \:,
\]
\item[(\(g_4\))] the function
 \begin{equation*}
 t \in (0, \infty) \mapsto \frac{g_{\varepsilon}(x, ts)s}{t}
 \end{equation*}
 is nondecreasing for all \(x \in \R^N\) and \(s \in \R\).
\end{itemize}
       
\subsection{The penalized functional}
       
The Hilbert space naturally associated to the linear part of our equation is the 
weighted Sobolev space \(H^1_{V}(\R^N)\),
which is the closure of \(C^\infty_c(\R^n)\) under any of the equivalent norms
\begin{equation*}
 \norm{u}_{\varepsilon}^2 := \int_{\R^N} \bigl( \varepsilon^2 \abs{\nabla u}^2 + V \abs{u}^2 \bigr)
\end{equation*} 
defined for \(\varepsilon > 0\).
       
We look for a solution \(u \in H^1_V (\R^N)\) of the penalized equation
\begin{equation}
\label{problemPNLSE}
\tag{\protect{$\mathcal{Q}_\varepsilon$}}
  - \varepsilon^2 \Delta u(x) + V(x)\,u(x) = g_{\varepsilon}\bigl(x,u(x)\bigr) \qquad \text{for \(x \in \R^N\)}.	
\end{equation}
The associated functional is 
\begin{equation*}
 \mathcal{J}_{\varepsilon} : H^1_V \rightarrow \R : \mathcal{J}_{\varepsilon}(u) := \frac{1}{2} \int_{\R^N} \bigl( \varepsilon^2
\abs{\nabla u}^2 + V\abs{u}^2 \bigr) - \int_{\R^N} G_{\varepsilon}\bigl(x,u(x)\bigr)\: dx\;.
\end{equation*}
It is standard that \(\mathcal{J}_{\varepsilon}\) is well-defined and continuously differentiable and that its critical points are weak solutions of the penalized equation \eqref{problemPNLSE}.
       
\subsection{The Nehari manifold}
The Nehari manifold associated to the functional 
\(\mathcal{J}_{\varepsilon}\) is defined by
\begin{equation*}
 \mathcal{N}_{\varepsilon} := \bigl\{ u \in H^1_V(\R^N) \setminus \{0\}\st \dualprod{ \mathcal{J}_{\varepsilon}'(u)}{u}=0
\bigr\}\;.
\end{equation*} 
It is well-known that \(u \in H^1_V(\R^N)\setminus \{0\}\) is a critical point of \(\mathcal{J}_{\varepsilon}\) if and only if \(u \in \mathcal{N}_{\varepsilon}\) and \(u\) is a critical point of \(\mathcal{J}_{\varepsilon}\) restricted to \(\mathcal{N}_{\varepsilon}\). 
       
We point out that \(\mathcal{N}_{\varepsilon}\) is bounded away from \(0\). We first have an integral estimate.

\begin{lemma}
\label{lemENehariEstimate1}
Let \(\varepsilon > 0\) and
\(     
 u \in \mathcal{N}_\varepsilon.
\)     
Then   
\[     
   \int_{\Lambda} (u)_+^{p+1} 
  \ge (1-\mu) \int_{\R^N} \varepsilon^2 \abs{\nabla u}^2 +V \abs{u}^2.
\]     
\end{lemma}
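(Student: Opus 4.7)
The plan is to test the Nehari condition $\langle \mathcal{J}_\varepsilon'(u), u\rangle = 0$ against $u$ itself, use property $(g_2)$ to control the nonlinear term outside $\Lambda$ by a multiple of $\varepsilon^2 H + V$, and finally invoke Lemma~\ref{lemmaPositivity} to absorb the $\varepsilon^2 H$ contribution into the gradient term.

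More precisely, I would start from
\[
 \int_{\R^N} \bigl(\varepsilon^2 \abs{\nabla u}^2 + V\abs{u}^2\bigr) = \int_{\R^N} g_\varepsilon(x,u)\,u,
\]
split the right-hand side over $\Lambda$ and $\R^N \setminus \Lambda$, and observe that on $\Lambda$ we have $g_\varepsilon(x,u)\,u = (u)_+^{p+1}$ by definition, while on $\R^N \setminus \Lambda$ property $(g_2)$ yields the pointwise bound
\[
 g_\varepsilon(x,u)\,u \le \mu\bigl(\varepsilon^2 H(x) + V(x)\bigr) (u)_+^{2} \le \mu\bigl(\varepsilon^2 H(x) + V(x)\bigr) \abs{u}^2.
\]
Since $H$ vanishes on $\Lambda$, integrating gives
\[
 \int_{\R^N}\! \bigl(\varepsilon^2 \abs{\nabla u}^2 + V\abs{u}^2\bigr) \le \int_{\Lambda} (u)_+^{p+1} + \mu\int_{\R^N}\! \bigl(\varepsilon^2 H\abs{u}^2 + V\abs{u}^2\bigr).
\]

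To close the estimate I would apply Lemma~\ref{lemmaPositivity}, extended from $C^\infty_c(\R^N)$ to $H^1_V(\R^N)$ by density (the density is built into the definition of $H^1_V$, and the inequality for test functions makes the $L^2(H)$-norm controlled by the gradient norm, so passing to the limit is immediate). This gives $\int_{\R^N} H\abs{u}^2 \le \int_{\R^N} \abs{\nabla u}^2$, hence
\[
 \mu\int_{\R^N} \bigl(\varepsilon^2 H\abs{u}^2 + V\abs{u}^2\bigr) \le \mu \int_{\R^N} \bigl(\varepsilon^2 \abs{\nabla u}^2 + V\abs{u}^2\bigr).
\]
Substituting and rearranging produces the claimed inequality with factor $(1-\mu)$.

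The proof should be short and essentially a rearrangement; the only mildly delicate point is making sure Lemma~\ref{lemmaPositivity} applies to the Nehari element $u \in H^1_V(\R^N)$ rather than only to smooth compactly supported functions, which is handled by the density definition of $H^1_V$ together with the control the inequality itself provides on the weighted $L^2$-norm $\int H\abs{u}^2$.
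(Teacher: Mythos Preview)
Your proof is correct and follows essentially the same route as the paper: expand the Nehari condition, bound the contribution outside \(\Lambda\) via \((g_2)\), and absorb the \(H\)-term using Lemma~\ref{lemmaPositivity}. The only addition is your explicit remark on extending Lemma~\ref{lemmaPositivity} by density to \(H^1_V(\R^N)\), which the paper leaves implicit.
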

\begin{proof}
By \((g_2)\), one has
\[     
\begin{split}
   \int_{\R^N} \bigl(\varepsilon^2 \abs{\nabla u}^2 +V \abs{u}^2\bigr)
&=\int_{\R^N} g_\varepsilon(x, u(x)) u(x)\:dx\:\\
&\le \int_{\Lambda} \abs{u}^{p+1} +\mu \int_{\R^N \setminus \Lambda} (V+\varepsilon^2 H)\abs{u}^2 .
\end{split}
\]
We deduce from Lemma~\ref{lemmaPositivity} that
\[
 (1-\mu) \int_{\R^N} \varepsilon^2 \abs{\nabla u}^2 +V \abs{u}^2
\le \int_{\Lambda} \abs{u}^{p+1}.\qedhere
\]
\end{proof}

\begin{lemma}
\label{lemENehariEstimate2}
Let \(\varepsilon > 0\) and
\(     
 u \in \mathcal{N}_\varepsilon.
\)     
Then   
\[     
  \int_{\Lambda} \varepsilon^2 \abs{\nabla u}^2 +V \abs{u}^2
  \ge c\varepsilon^{N}
\]     
where \(c > 0\) is independent of \(\varepsilon\) and \(u\).
\end{lemma}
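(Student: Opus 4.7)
The plan is to combine Lemma~\ref{lemENehariEstimate1} with a Gagliardo--Nirenberg interpolation on $\Lambda$; the main point is that one must use the $V\abs{u}^2$ term and the $\varepsilon^2\abs{\nabla u}^2$ term \emph{simultaneously}, since a bare Sobolev--H\"older bound only produces the weaker scaling $\varepsilon^{2(p+1)/(p-1)}$, which in the subcritical range is strictly weaker than $\varepsilon^N$.

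First I would observe that under the hypotheses of Theorem~\ref{theoremMainLambda} the potential stays uniformly positive on $\Lambda$. In the subcritical regime the exponent $\tfrac{p+1}{p-1}-\tfrac{N}{2}$ is strictly positive, so the inequality $\sup_\Lambda V^{(p+1)/(p-1)-N/2} < 2\inf_\Lambda V^{(p+1)/(p-1)-N/2}$ rules out $\inf_\Lambda V=0$; setting $V_0:=\inf_\Lambda V>0$ we have $V\ge V_0$ throughout $\Lambda$.

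Next, Lemma~\ref{lemENehariEstimate1} combined with the trivial bound $\int_{\R^N}\varepsilon^2\abs{\nabla u}^2+V\abs{u}^2 \ge \int_\Lambda\varepsilon^2\abs{\nabla u}^2+V\abs{u}^2 =: E_\Lambda$ gives $(1-\mu)E_\Lambda \le \int_\Lambda (u)_+^{p+1}$. I would bound the right-hand side via the Gagliardo--Nirenberg inequality applied to an $H^1(\R^N)$-extension of $u|_\Lambda$, obtaining
\[
 \int_\Lambda (u)_+^{p+1}\le C\,\norm{\nabla u}_{L^2(\Lambda)}^{N(p-1)/2}\,\norm{u}_{L^2(\Lambda)}^{(p+1)-N(p-1)/2},
\]
where the second exponent is nonnegative precisely because $p<\tfrac{N+2}{N-2}$. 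Using $\norm{\nabla u}_{L^2(\Lambda)}^2 \le \varepsilon^{-2}E_\Lambda$ together with $\norm{u}_{L^2(\Lambda)}^2 \le V_0^{-1}E_\Lambda$ (from $V\ge V_0$ on $\Lambda$), the estimate collapses to
\[
 (1-\mu)\,E_\Lambda \le C'\, E_\Lambda^{(p+1)/2}\,\varepsilon^{-N(p-1)/2},
\]
so that $E_\Lambda^{(p-1)/2}\ge c\,\varepsilon^{N(p-1)/2}$, yielding $E_\Lambda\ge c\,\varepsilon^N$ with $c>0$ independent of $\varepsilon$ and $u$.

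The crucial step to get right is choosing the Gagliardo--Nirenberg interpolation rather than Sobolev--H\"older: only the former lets the $\varepsilon^{-2}$-weight on $\norm{\nabla u}_{L^2}^2$ interact with the $V_0^{-1}$-weight on $\norm{u}_{L^2}^2$ in exactly the right proportion, producing the exponent $-N(p-1)/2$ on $\varepsilon$ and rearranging to the sharp scaling $\varepsilon^N$ expected of a concentrating profile.
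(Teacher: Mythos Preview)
Your proof is correct and is essentially identical to the paper's: what the paper calls ``the Sobolev and H\"older inequalities'' is exactly the Gagliardo--Nirenberg interpolation you use (Sobolev to $L^{2^*}$, then H\"older between $L^{2^*}$ and $L^2$), producing the same exponents $\tfrac{N(p-1)}{2}$ and $(p+1)-\tfrac{N(p-1)}{2}$ and the same $\varepsilon^{-N(p-1)/2}$ factor. Your contrast between ``Gagliardo--Nirenberg'' and ``Sobolev--H\"older'' is therefore a false dichotomy here; the only cosmetic difference is that the paper applies Sobolev globally on $\R^N$ while you localize via an $H^1$-extension of $u\vert_\Lambda$.
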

\begin{proof}
Since  \(\inf_\Lambda V > 0\), the Sobolev and H\"older inequalities imply that
\[
\begin{split}
  \int_{\Lambda} \abs{u}^{p+1} &\le C \Bigl( \int_{\R^N} \abs{\nabla u}^2 \Bigr)^{\frac{p-1}{4} N }
\Bigl( \int_{\Lambda} \abs{u}^2\Bigr)^{\frac{p+1}{2}-\frac{p-1}{4}N}\\
&\le \frac{C}{\varepsilon^{\frac{p-1}{2} N}} \Bigl( \int_{\Lambda} \varepsilon^2 \abs{\nabla u}^2 + V \abs{u}^2 \Bigr)^\frac{p+1}{2}.
\end{split}
\]
The conclusion follows from Lemma~\ref{lemENehariEstimate1}.
\end{proof}

We also have a uniform lower estimate on the maximum.
\begin{lemma}\label{lem:1}
Let \(\varepsilon > 0\) and
\(u \in \mathcal{N}_\varepsilon\). Then
\begin{equation*}
 \sup_{\Lambda}\frac{u_+^{p-1}}{V} \ge 1\:.
\end{equation*}
\end{lemma}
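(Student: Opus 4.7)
The strategy is proof by contradiction. If the conclusion failed then there would exist \(\lambda < 1\) such that \(u_+(x)^{p-1} \le \lambda V(x)\) pointwise on \(\Lambda\); multiplying through by \(u_+^2\) immediately gives the pointwise bound \(u_+^{p+1} \le \lambda V u^2\) on \(\Lambda\). I would feed this into the Nehari identity \(\dualprod{\mathcal{J}_\varepsilon'(u)}{u} = 0\) and try to derive \(\|u\|_\varepsilon = 0\), which contradicts \(u \in \mathcal{N}_\varepsilon\).

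Concretely, the Nehari identity is
\[
 \int_{\R^N} \bigl(\varepsilon^2 \abs{\nabla u}^2 + V u^2\bigr) = \int_{\R^N} g_\varepsilon(x,u)\,u.
\]
Split the right-hand side over \(\Lambda\) and \(\R^N \setminus \Lambda\). On \(\Lambda\) the integrand is \(u_+^{p+1}\), and the assumption gives \(u_+^{p+1} \le \lambda V u^2\). On \(\R^N \setminus \Lambda\) the second half of \((g_2)\) gives \(g_\varepsilon(x,u)\,u \le \mu(V + \varepsilon^2 H)u^2\). Recalling that \(H \equiv 0\) on \(\Lambda\) and rearranging, I would obtain
\[
  \int_{\R^N} \bigl(\varepsilon^2\abs{\nabla u}^2 - \mu \varepsilon^2 H u^2\bigr) + (1-\lambda)\int_{\Lambda} V u^2 + (1-\mu)\int_{\R^N\setminus\Lambda} V u^2 \le 0.
\]
The first integrand is \((1-\mu)\varepsilon^2\abs{\nabla u}^2 + \mu\varepsilon^2\bigl(\abs{\nabla u}^2 - H u^2\bigr)\), and by the positivity principle of Lemma~\ref{lemmaPositivity} the last parenthesis has nonnegative integral. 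Thus the first term is bounded below by \((1-\mu)\int \varepsilon^2\abs{\nabla u}^2 \ge 0\), and with \(1-\mu > 0\) and \(1-\lambda > 0\) every piece of the left-hand side must vanish. In particular \(\int\varepsilon^2\abs{\nabla u}^2 + \int V u^2 = 0\), i.e.\ \(\|u\|_\varepsilon = 0\), contradicting the definition of \(\mathcal{N}_\varepsilon\).

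The only non-routine point is ensuring the positivity principle of Lemma~\ref{lemmaPositivity}, stated there for test functions, actually applies to \(u \in H^1_V(\R^N)\). This should be handled by a density argument using that \(C^\infty_c(\R^N)\) is dense in \(H^1_V\) and that the weight \(H\) is controlled by the classical Hardy weight \((N-2)^2/(4\abs{x-x_0}^2)\), so the Hardy quadratic form passes to the limit. Apart from that, the argument is a direct bookkeeping exercise combining only the Nehari relation, the structure of the penalized nonlinearity \((g_2)\), and the positivity principle already in hand.
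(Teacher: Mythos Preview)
Your argument is correct and follows essentially the same route as the paper: both use the Nehari identity, split over \(\Lambda\) and its complement, apply \((g_2)\) outside \(\Lambda\), bound \(u_+^{p+1}\) by \((\sup_\Lambda u_+^{p-1}/V)\,V u^2\) inside \(\Lambda\), and invoke the positivity principle of Lemma~\ref{lemmaPositivity}. The paper phrases this directly---obtaining \(\sup_\Lambda(u_+^{p-1}/V)\int_\Lambda V u^2 \ge \int_\Lambda V u^2\) and dividing after citing Lemma~\ref{lemENehariEstimate2} to ensure the integral is positive---whereas you wrap the same computation in a contradiction and use \(u\ne 0\) from the definition of \(\mathcal{N}_\varepsilon\); these are equivalent.
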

This was proved for solutions of \eqref{problemPNLSE} by V.\thinspace Moroz and J.\thinspace Van Schaftingen \cite{MorozVanSchaftingen2010}*{Lemma 4.2} (see also~\cite{BonheureVanSchaftingen2008}*{Lemma 17}).

\begin{proof}
One has by \((g_2)\), 
\[
\begin{split}
\int_{\R^N} \varepsilon^2 \abs{\nabla u}^2 +V \abs{u}^2 &\le \int_{\Lambda} (u)_+^{p+1}
+\mu \int_{\R^N \setminus \Lambda} (V + \varepsilon^2 H)\abs{u}^2 \\
&\le \sup_{\Lambda}\frac{u_+^{p-1}}{V} \int_{\Lambda} V \abs{u}^{2}
+\mu \int_{\R^N \setminus \Lambda} (V + \varepsilon^2 H)\abs{u}^2\:,
\end{split}
\]
and thus by Lemma~\ref{lemmaPositivity},
\[
 \sup_{\Lambda}\frac{u_+^{p-1}}{V} \int_{\Lambda} V \abs{u}^{2} \ge \int_{\Lambda} V \abs{u}^{2}\:.
\]
By Lemma~\ref{lemENehariEstimate2}, \(\int_{\Lambda} V \abs{u}^{2} > 0\), and the conclusion follows.
\end{proof}

We also note the following coercivity estimate.
\begin{lemma}\label{lem:bounded}
For every \(\varepsilon > 0\) and \(u \in \mathcal{N}_{\varepsilon}\),
\[
  \Bigl(\frac{1}{2}-\frac{1}{p+1}\Bigr)(1-\mu) \int_{\R^N} \bigl( \varepsilon^2 \abs{\nabla u}^2 + V
\abs{u}^2\bigr) \leq \mathcal{J}_{\varepsilon}(u)\:.
\]
\end{lemma}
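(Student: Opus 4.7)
The plan is to combine the Nehari constraint with property $(g_3)$ and then invoke Lemma~\ref{lemENehariEstimate1} to get back the full weighted Sobolev norm on the right-hand side.

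First, since $u \in \mathcal{N}_\varepsilon$, the Nehari identity $\dualprod{\mathcal{J}'_\varepsilon(u)}{u}=0$ yields
\[
  \int_{\R^N} \bigl(\varepsilon^2 \abs{\nabla u}^2 + V\abs{u}^2\bigr) = \int_{\R^N} g_\varepsilon(x,u)\,u\,dx.
\]
Therefore
\[
  \mathcal{J}_\varepsilon(u) = \int_{\R^N} \Bigl(\tfrac{1}{2} g_\varepsilon(x,u)u - G_\varepsilon(x,u)\Bigr)\,dx.
\]
Splitting the integral over $\Lambda$ and $\R^N\setminus\Lambda$ and applying the two inequalities in $(g_3)$ (using $\tfrac{1}{p+1}$ inside $\Lambda$ and $\tfrac{1}{2}$ outside), the contribution outside $\Lambda$ vanishes and I obtain
\[
  \mathcal{J}_\varepsilon(u) \geq \Bigl(\tfrac{1}{2}-\tfrac{1}{p+1}\Bigr)\int_\Lambda g_\varepsilon(x,u)\,u\,dx.
\]

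Next, on $\Lambda$ the definition of $g_\varepsilon$ gives $g_\varepsilon(x,u)\,u = (u)_+^{p+1}$ pointwise, so
\[
  \int_\Lambda g_\varepsilon(x,u)\,u\,dx = \int_\Lambda (u)_+^{p+1}\,dx.
\]
Applying Lemma~\ref{lemENehariEstimate1} to bound this below by $(1-\mu)\int_{\R^N}(\varepsilon^2 \abs{\nabla u}^2 + V\abs{u}^2)$ yields the desired estimate. The only subtlety is keeping track that $(g_3)$ has different constants on the two regions, so that the ``bad'' term $\tfrac{1}{2}\int_{\R^N\setminus\Lambda} g_\varepsilon(x,u)u$ that would otherwise appear with a negative sign is exactly canceled by the $\tfrac{1}{2}$ from the Nehari identity; apart from this bookkeeping the argument is entirely routine.
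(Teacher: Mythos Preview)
Your proof is correct. Both arguments start from the Nehari constraint and use \((g_3)\), but you organize the computation slightly differently from the paper: you write \(\mathcal{J}_\varepsilon(u)=\int_{\R^N}\bigl(\tfrac{1}{2}g_\varepsilon u - G_\varepsilon\bigr)\), drop the nonnegative contribution outside \(\Lambda\), and then invoke Lemma~\ref{lemENehariEstimate1} to convert \(\int_\Lambda (u)_+^{p+1}\) back into the weighted norm. The paper instead writes \(\mathcal{J}_\varepsilon(u)=(\tfrac{1}{2}-\tfrac{1}{p+1})\norm{u}_\varepsilon^2 + \tfrac{1}{p+1}\int_{\R^N}\bigl(g_\varepsilon u - (p+1)G_\varepsilon\bigr)\) and bounds the correction term directly via \((g_3)\), \((g_2)\), and Lemma~\ref{lemmaPositivity}. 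Since Lemma~\ref{lemENehariEstimate1} is itself proved from \((g_2)\) and Lemma~\ref{lemmaPositivity}, the two routes use the same ingredients; your version is a touch more streamlined because the outside-\(\Lambda\) term disappears immediately rather than being carried along and then absorbed.
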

\begin{proof}
Since \(u \in \mathcal{N}_\varepsilon\), one has 
\begin{multline*}
  \mathcal{J}_{\varepsilon}(u)=\Bigl(\frac{1}{2}-\frac{1}{p+1}\Bigr) \int_{\R^N} \bigl( \varepsilon^2 \abs{\nabla u}^2 + V \abs{u}^2 \bigr)\\
 + \frac{1}{p+1} \int_{\R^N}  g_\varepsilon\bigl(x, u(x)\bigr) u(x) - (p+1) G_\varepsilon\bigl(x, u(x)\bigr)\,dx.
\end{multline*}
In view of \((g_3)\) and \((g_2)\), 
\begin{multline*}
   \frac{1}{p+1} \int_{\R^N}  g_\varepsilon\bigl(x, u(x)\bigr) u(x) - {(p+1)} G_\varepsilon(x, u(x))\,dx\\
\geq -\frac{p-1}{p+1} \int_{\R^N \setminus \Lambda} G_\varepsilon\bigl(x, u(x)\bigr)\,dx \\
\geq - \Bigl(\frac{1}{2}-\frac{1}{p+1}\Bigr) \mu \int_{\R^N \setminus \Lambda} ( \varepsilon^2 H + V )\abs{u}^2.
\end{multline*}
Thanks to Lemma~\ref{lemmaPositivity}, we reach the conclusion.
\end{proof}

\subsection{The Palais-Smale condition}
For every \(\varepsilon > 0\), the functional \(\mathcal{J}_{\varepsilon}\) satisfies the Palais-Smale compactness condition: 

\begin{lemma}\label{lem:PalaisSmale}
For every \(\varepsilon > 0\), if \( (u_n)_{n \in \N} \) is a sequence such that \((\mathcal{J}_{\varepsilon}(u_n))_{n \in \N}\) converges and \((\mathcal{J}_{\varepsilon}'(u_n))_{n \in \N} \) converges to \(0\) in \( (H^1_V(\R^N))' \), then, up to a subsequence, \( (u_n)_{n \in \N} \) converges in \(H^1_V(\R^N)\).
\end{lemma}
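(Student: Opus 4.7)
The plan follows the standard three-step scheme for Palais--Smale sequences: obtain boundedness, extract a weak limit, and then upgrade weak convergence to strong convergence. The first step mirrors the computation of Lemma~\ref{lem:bounded}: since \(\langle \mathcal{J}'_\varepsilon(u_n), u_n\rangle = o(\|u_n\|_\varepsilon)\), the bounds \((g_2)\)--\((g_3)\) together with Lemma~\ref{lemmaPositivity} yield
\[
\Bigl(\tfrac{1}{2}-\tfrac{1}{p+1}\Bigr)(1-\mu)\,\|u_n\|_\varepsilon^2 \leq \mathcal{J}_\varepsilon(u_n) - \tfrac{1}{p+1}\langle\mathcal{J}'_\varepsilon(u_n),u_n\rangle \leq C + o(\|u_n\|_\varepsilon),
\]
so $(u_n)$ is bounded in $H^1_V(\R^N)$. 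Along a subsequence, $u_n \rightharpoonup u$ in $H^1_V(\R^N)$, pointwise almost everywhere, and strongly in $L^q_{\mathrm{loc}}(\R^N)$ for every $q<2N/(N-2)$. Using the growth bound $g_\varepsilon(x,s)\leq s_+^p$ on \(\Lambda\) together with local compactness, one passes to the limit in $\langle\mathcal{J}'_\varepsilon(u_n),\varphi\rangle\to 0$ for every $\varphi\in C_c^\infty(\R^N)$, so that $u$ is a critical point of $\mathcal{J}_\varepsilon$.

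The main obstacle is the strong convergence itself, because the possible decay of $V$ at infinity leaves room for mass to escape. I would therefore establish the uniform tail estimate
\[
\lim_{R\to\infty}\,\limsup_{n\to\infty}\,\int_{|x|>R}\bigl(\varepsilon^2|\nabla u_n|^2 + V u_n^2\bigr)\,dx = 0.
\]
Fix $R$ so large that $\overline{\Lambda}\subset B_R$, and let $\psi_R\in C^\infty(\R^N)$ be a smooth cutoff with $\psi_R = 0$ on $B_R$, $\psi_R = 1$ off $B_{2R}$, and $|\nabla\psi_R|\leq C/R$. Testing $\mathcal{J}'_\varepsilon(u_n)$ against $\psi_R^2 u_n \in H^1_V(\R^N)$ and expanding the gradient, the nonlinear term is dominated by $\mu\int\psi_R^2(V+\varepsilon^2 H)u_n^2$ thanks to $(g_2)$, and Lemma~\ref{lemmaPositivity} applied to $\psi_R u_n$ absorbs the $\varepsilon^2 H$ contribution into the kinetic energy. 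After rearrangement, only error terms supported on the annulus $B_{2R}\setminus B_R$ and linear in $|\nabla\psi_R|$ remain; as $n\to\infty$ the Palais--Smale hypothesis and strong $L^2_{\mathrm{loc}}$ convergence reduce them to integrals of $u$ and $|\nabla u|$ over that annulus, which in turn vanish as $R\to\infty$ because $\nabla u\in L^2(\R^N)$ and $u\in L^{2N/(N-2)}(\R^N)$ by Sobolev.

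With the tail estimate in hand, strong convergence follows easily. From $\|u_n\|_\varepsilon^2 = \int g_\varepsilon(\cdot,u_n)\,u_n + o(1)$ and $\|u\|_\varepsilon^2 = \int g_\varepsilon(\cdot,u)\,u$, it suffices to show $\int g_\varepsilon(\cdot,u_n)\,u_n \to \int g_\varepsilon(\cdot,u)\,u$. On $\Lambda$ this is Rellich compactness in $L^{p+1}$. Outside $\Lambda$, one splits at radius $R$: on the bounded annulus, Rellich and $(g_2)$ suffice, whereas the tails are uniformly small by the previous step (for $u_n$) and because $u\in H^1_V(\R^N)$ (for $u$). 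Since $H^1_V(\R^N)$ is a Hilbert space, weak convergence combined with convergence of norms yields strong convergence, completing the proof.
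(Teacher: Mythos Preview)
The paper does not give a detailed proof of this lemma, referring instead to \cite{delPinoFelmer1996}*{Lemma 1.1}, \cite{BonheureVanSchaftingen2008}*{Lemma 6}, \cite{MorozVanSchaftingen2010}*{Lemma 3.5} and \cite{DCBonheureVanSchaftingen2008}. Your argument is correct and is precisely the standard scheme those references use: boundedness from \(\mathcal{J}_\varepsilon(u_n)-\tfrac{1}{p+1}\langle\mathcal{J}'_\varepsilon(u_n),u_n\rangle\) together with \((g_2)\), \((g_3)\) and Lemma~\ref{lemmaPositivity}, then a uniform tail estimate obtained by testing against \(\psi_R^2 u_n\) and absorbing the \(\varepsilon^2 H\) contribution via Lemma~\ref{lemmaPositivity} (exactly as in Lemma~\ref{lemmaAsymptoticsOutsideLambda}), and finally norm convergence via local Rellich compactness on \(\Lambda\) and on bounded annuli.
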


The proof of Lemma~\ref{lem:PalaisSmale} is a combination of the arguments for the penalization without \(H\) \cite{BonheureVanSchaftingen2008}*{Lemma 6} and without \(V\) \cite{MorozVanSchaftingen2010}*{Lemma 3.5} whose main lines originate in the proof  for nondecaying potentials \cite{delPinoFelmer1996}*{Lemma 1.1}. It was already proved with the present penalization for the functional restricted to a subspace of symmetric functions \cite{DCBonheureVanSchaftingen2008}.

\subsection{Minimizers on the Nehari manifold}

\begin{proposition}\label{prop:Minimizer}
For every \(\varepsilon > 0\), there exists \(u \in \mathcal{N}_\varepsilon\) such that
\[
 \mathcal{J}_{\varepsilon}(u) = \inf_{\mathcal{N}_\varepsilon}\mathcal{J}_{\varepsilon}\:.
\]
\end{proposition}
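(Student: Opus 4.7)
The plan is to take a minimizing sequence, use Ekeland's variational principle to upgrade it to a Palais--Smale sequence for $\mathcal{J}_\varepsilon$, and then invoke Lemma~\ref{lem:PalaisSmale} to extract a convergent subsequence whose limit is the desired minimizer.

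Set $c_\varepsilon := \inf_{\mathcal{N}_\varepsilon}\mathcal{J}_\varepsilon$ and let $(u_n) \subset \mathcal{N}_\varepsilon$ satisfy $\mathcal{J}_\varepsilon(u_n) \to c_\varepsilon$. Lemma~\ref{lem:bounded} gives that $(u_n)$ is bounded in $H^1_V(\R^N)$, so in particular $c_\varepsilon < \infty$. Combining Lemmas~\ref{lemENehariEstimate1} and \ref{lemENehariEstimate2} with Lemma~\ref{lem:bounded} forces $c_\varepsilon \geq c\varepsilon^{N} > 0$, so a minimizer, if it exists, is nontrivial.

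Next, I would verify that $\mathcal{N}_\varepsilon$ is a complete $C^1$ manifold on which Ekeland's principle can be applied. Writing $\Psi_\varepsilon(u) := \langle \mathcal{J}_\varepsilon'(u), u\rangle$, property $(g_4)$ implies that for $u \in \mathcal{N}_\varepsilon$ the map $t \mapsto \mathcal{J}_\varepsilon(tu)$ attains a strict maximum at $t=1$, and a quantitative form of this monotonicity yields a uniform bound $\langle \Psi_\varepsilon'(u), u \rangle \leq -\delta \|u\|_\varepsilon^2$ for some $\delta > 0$ independent of $u \in \mathcal{N}_\varepsilon$. Together with the lower bound from Lemma~\ref{lemENehariEstimate2}, this makes $\mathcal{N}_\varepsilon$ into a closed $C^1$ Finsler submanifold of $H^1_V(\R^N)$ on which the restricted functional is bounded below by $c_\varepsilon > 0$.

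Applying Ekeland's variational principle on $\mathcal{N}_\varepsilon$, I may replace $(u_n)$ by a new minimizing sequence, still denoted $(u_n)$, for which there exist Lagrange multipliers $\lambda_n$ with $\mathcal{J}_\varepsilon'(u_n) - \lambda_n \Psi_\varepsilon'(u_n) \to 0$ in $(H^1_V)'$. Testing this relation against $u_n$, using $\langle \mathcal{J}_\varepsilon'(u_n), u_n \rangle = 0$, the boundedness of $(u_n)$, and the uniform lower bound $-\langle \Psi_\varepsilon'(u_n), u_n\rangle \geq \delta \|u_n\|_\varepsilon^2 \geq \delta c\varepsilon^{N}$, gives $\lambda_n \to 0$, and consequently $\mathcal{J}_\varepsilon'(u_n) \to 0$. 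Thus $(u_n)$ is a Palais--Smale sequence for $\mathcal{J}_\varepsilon$ at level $c_\varepsilon$. Lemma~\ref{lem:PalaisSmale} then yields a subsequence converging strongly in $H^1_V(\R^N)$ to some $u$; strong convergence gives $\Psi_\varepsilon(u) = 0$ and $\mathcal{J}_\varepsilon(u) = c_\varepsilon$, while Lemma~\ref{lemENehariEstimate2} rules out $u = 0$, so $u \in \mathcal{N}_\varepsilon$ achieves the infimum.

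The main obstacle is the regularity issue: because of the $\min$ in its definition, $s \mapsto g_\varepsilon(x,s)$ is only Lipschitz, so $\Psi_\varepsilon$ is not \emph{a priori} of class $C^1$. To handle this cleanly, rather than differentiating $\Psi_\varepsilon$, one parametrizes $\mathcal{N}_\varepsilon$ via the projection $u \mapsto t(u)u$ from the unit sphere of $H^1_V(\R^N)$, where $t(u)$ is the unique positive number with $t(u)u \in \mathcal{N}_\varepsilon$; existence, uniqueness and continuity of $t(u)$ follow from $(g_1)$--$(g_4)$. Ekeland's principle on the sphere, transported via this homeomorphism, produces the required Palais--Smale sequence without differentiating $\Psi_\varepsilon$.
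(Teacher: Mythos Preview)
Your argument is correct; the workaround via the unit-sphere parametrization is a standard and legitimate way to avoid differentiating \(\Psi_\varepsilon\), and once you have a Palais--Smale sequence the conclusion follows from Lemma~\ref{lem:PalaisSmale} exactly as you describe.

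The paper takes a slightly different, more concise route: instead of working on \(\mathcal{N}_\varepsilon\) and applying Ekeland there, it invokes the standard equality (due to \((g_4)\), see Rabinowitz)
\[
\inf_{\mathcal{N}_\varepsilon}\mathcal{J}_{\varepsilon}
= \inf_{\substack{u \in H^1_V(\R^N) \\ u_+ \vert_\Lambda \ne 0}} \sup_{t > 0} \mathcal{J}_{\varepsilon}(tu)
= \inf_{\substack{\gamma \in C([0, 1], H^1_V(\R^N)) \\ \gamma(0)=0,\ \mathcal{J}_{\varepsilon}(\gamma(1)) < 0}} \sup_{t \in [0, 1]} \mathcal{J}_{\varepsilon}(\gamma(t)),
\]
so that the Nehari infimum coincides with the mountain-pass level of the unconstrained functional. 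Lemmas~\ref{lemENehariEstimate2} and~\ref{lem:bounded} make this level positive, and the mountain-pass theorem together with Lemma~\ref{lem:PalaisSmale} yields the critical point directly. This bypasses entirely the question of whether \(\mathcal{N}_\varepsilon\) is a \(C^1\) manifold and avoids Lagrange multipliers. Your approach, by contrast, is more hands-on and self-contained (no appeal to the mountain-pass equivalence), at the price of having to confront and resolve the regularity issue you correctly flagged.
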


Proposition~\ref{prop:Minimizer} was proved for the penalization for nondecaying potentials \cite{delPinoFelmer1996}*{Lemma 2.1}, the penalization without \(H\) \cite{BonheureVanSchaftingen2008}*{Proposition 9} the penalization without \(V\) \cite{BonheureVanSchaftingen2008}*{Proposition 3.7} and the present penalization under symmetry constraints \cite{DCBonheureVanSchaftingen2008}.

\begin{proof}[Proof of Proposition~\ref{prop:Minimizer}]
The proof is standard: by \( (g_4)\) one has the equality \cite{Rabinowitz1992}*{Proposition 3.11}
\[
\inf_{\mathcal{N}_\varepsilon}\mathcal{J}_{\varepsilon}= \inf_{\substack{u \in H^1_V(\R^N) \\ u_+ \vert_\Lambda \ne 0}} \sup_{t > 0} \mathcal{J}_{\varepsilon}(tu)= \inf_{\substack{\gamma \in C([0, 1], H^1_V(\R^N)) \\ \gamma(0)=0 \\ \mathcal{J}_{\varepsilon}(\gamma(1)) < 0}} \sup_{t \in [0, 1]} \mathcal{J}_{\varepsilon}(\gamma(t))\:;
\]
by Lemmas \ref{lemENehariEstimate2} and \ref{lem:bounded}, \(\mathcal{J}_{\varepsilon}\) is bounded away from \(0\) on \(\mathcal{N}_\varepsilon\). Since \(\mathcal{J}_{\varepsilon} \) satisfies the Palais-Smale compactness condition by Lemma~\ref{lem:PalaisSmale}, the existence of \(u\) follows.
\end{proof}

\section{Limiting problems}
\label{sectionLimiting}

\subsection{The limit problem}
For \(\nu > 0\) let \(U_{\nu}\) be the unique positive solution of the problem
\begin{equation}\label{problemLimit}
\left\{ 
\begin{aligned}
 -\Delta u + \nu u &= u^p & &\text{in \(\R^{N}\)}, \\
 u &> 0, \\
 u(0) &= \max_{\R^N} u.
\end{aligned}\right.
\end{equation}
The function \(U_{\nu}\) is radial around the origin \cite{Kwong}. The functional associated to \eqref{problemLimit}
is \(\mathcal{I}_{\nu} : H^1(\R^N) \to \R\) defined for \(u \in H^1(\R^N)\) by 
\begin{equation*}
 \mathcal{I}_{\nu}(u) := \frac{1}{2} \int_{\R^N} \bigl( \abs{\nabla u}^2 + \nu \abs{u}^2 \bigr) - \frac{1}{p+1} \int_{\R^N} u_+^{p+1}.
\end{equation*}
One has the variational characterization
\[
    b_{\nu} := \inf_{\mathcal{M}_\nu} \mathcal{I}_\nu = \mathcal{I}_{\nu}(U_{\nu})
\]
where
\[
 \mathcal{M}_\nu = \Bigl\lbrace u \in H^1(\R^N)\setminus \{0\} \st \int_{\R^N} \bigl(\abs{\nabla u}^2 + \nu \abs{u}^2 \bigr) = \int_{\R^N} u_+^{p+1} \Bigr\rbrace\:.
\]
We also set
\begin{equation}\label{estim:by}
 \mathcal{C}(y) := b_{V(y)} = \frac{S_{p+1}^r}{r} V(y)^{\frac{p+1}{p-1}-\frac{N}{2}},
\end{equation}
where \(\frac{1}{r}=\frac{1}{2}-\frac{1}{p+1}\) and
\begin{equation*}
 S_{p+1}^2 := \inf \Bigl\lbrace \int_{\R^N} \bigl(\abs{\nabla u}^2 + \abs{u}^2\bigr) \st u \in
H^1(\R^N)\ \text{and}\ \int_{\R^N} u_+^{p+1} = 1
\Bigr\rbrace\:.
\end{equation*}

We also recall the following classical result
\begin{lemma}
\label{lemmaConvergenceMinimizingLimiting}
Let \(\nu > 0\) and \( (v_n)_{n\in \N}\) be a sequence in \(\mathcal{M}_\nu \subset H^1(\R^N)\). If 
\[\lim_{n \to \infty} \mathcal{I}_\nu(v_n) = b_{\nu}, \]
then there exists a sequence of points \((y_n)_{n\in \N}\) in \(\R^N\) such that \(v_n(\cdot -y_n) \to U_\nu\) in \(H^1(\R^N)\).
\end{lemma}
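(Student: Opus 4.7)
The plan is a standard concentration-compactness argument for a minimizing sequence on a Nehari manifold, followed by identification of the limit via Kwong's uniqueness result. First, for $v \in \mathcal{M}_\nu$, the Nehari identity gives $\mathcal{I}_\nu(v) = \bigl(\tfrac{1}{2} - \tfrac{1}{p+1}\bigr)\int_{\R^N}(|\nabla v|^2 + \nu v^2)$, so $(v_n)$ is bounded in $H^1(\R^N)$ with $\int_{\R^N}(|\nabla v_n|^2 + \nu v_n^2) \to r b_\nu$; combined with the Sobolev embedding $H^1(\R^N) \hookrightarrow L^{p+1}(\R^N)$, the Nehari identity also yields a uniform positive lower bound on the same quantity. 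Lions's vanishing lemma then produces $R > 0$, $\delta > 0$ and $y_n \in \R^N$ with $\int_{B(y_n, R)} v_n^2 \ge \delta$. Setting $\tilde v_n := v_n(\cdot - y_n)$ and extracting a subsequence, $\tilde v_n \rightharpoonup v$ weakly in $H^1(\R^N)$ and pointwise a.e., with $v \not\equiv 0$.

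Next, I would upgrade to strong convergence via a Brezis--Lieb splitting. Writing $w_n := \tilde v_n - v$, one has
\begin{align*}
 \int_{\R^N}(|\nabla \tilde v_n|^2 + \nu \tilde v_n^2) &= \int_{\R^N}(|\nabla v|^2 + \nu v^2) + \int_{\R^N}(|\nabla w_n|^2 + \nu w_n^2) + o(1),\\
 \int_{\R^N} \tilde v_{n,+}^{p+1} &= \int_{\R^N} v_+^{p+1} + \int_{\R^N} (w_n)_+^{p+1} + o(1).
\end{align*}
Projecting $v$ and $w_n$ onto $\mathcal{M}_\nu$ by suitable positive scalar multiples and applying the variational characterization $b_\nu = \inf_{\mathcal{M}_\nu}\mathcal{I}_\nu$ to each piece yields a subadditive decomposition of $\mathcal{I}_\nu(\tilde v_n)$; the strict concavity of $t \mapsto t^{2/(p+1)}$ on $[0, \infty)$, together with $v \not\equiv 0$, then forces $w_n \to 0$ in $H^1(\R^N)$.

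Finally, the strong limit $v \in \mathcal{M}_\nu$ achieves $\mathcal{I}_\nu(v) = b_\nu$. A standard Lagrange-multiplier computation (in which the multiplier must vanish because the derivative of the Nehari constraint at $v$ is nonzero) shows that $v$ is a nonnegative nontrivial weak solution of $-\Delta v + \nu v = v^p$, hence positive by elliptic regularity and the strong maximum principle. Translating so that its maximum is attained at the origin and invoking Kwong's uniqueness theorem \cite{Kwong}, $v = U_\nu(\cdot - z)$ for some $z \in \R^N$; replacing $y_n$ by $y_n + z$ gives the desired convergence. I expect the main technical step to be the Brezis--Lieb/subadditivity argument, where one must rule out a nontrivial remainder carrying part of the energy away to infinity: this is exactly the role played by the strict convexity of the Nehari energy along positive scalar multiples, and by the fact that $b_\nu$ is attained.
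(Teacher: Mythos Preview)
Your argument is correct and gives a valid alternative to the paper's proof, but the route is genuinely different. The paper does not run a Brezis--Lieb/subadditivity splitting on the minimizing sequence. Instead it first applies Ekeland's variational principle on \(\mathcal{M}_\nu\) to replace \((v_n)\) by a nearby sequence \((\tilde v_n)\) which is a Palais--Smale sequence for the \emph{unconstrained} functional \(\mathcal{I}_\nu\) (one checks the Lagrange multipliers tend to zero). After extracting a nontrivial weak limit via Lions's lemma, this limit \(\bar v\) is automatically a solution of \(-\Delta v+\nu v=v_+^p\) at level \(b_\nu\); strong convergence then follows simply from convergence of \(\lVert(\bar v_n)_+\rVert_{L^{p+1}}\) and the identity \(\lVert \bar v_n-\bar v\rVert_{H^1}^2=\langle \mathcal{I}_\nu'(\bar v_n)-\mathcal{I}_\nu'(\bar v),\bar v_n-\bar v\rangle+\int((\bar v_n)_+^p-\bar v_+^p)(\bar v_n-\bar v)\). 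In short, the paper uses Ekeland to make the weak limit a critical point \emph{before} proving strong convergence, whereas you obtain strong convergence first by concentration-compactness and only afterwards identify the limit as a critical point via a Lagrange-multiplier argument. Your approach is more self-contained (no Ekeland), at the price of the splitting step; the paper's approach is shorter once Ekeland is available and avoids any discussion of subadditivity. One small point to tighten: Lions applied to \(\lvert v_n\rvert\) yields only \(v\not\equiv 0\), while your projection step requires \(v_+\not\equiv 0\); this is easily fixed by applying Lions to \((v_n)_+\), whose \(L^{p+1}\) norm is bounded below by the Nehari identity, or by noting that \(v_+\equiv 0\) with \(v\not\equiv 0\) would let the remainder \(w_n\) project onto \(\mathcal{M}_\nu\) with energy strictly below \(b_\nu\).
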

\begin{proof}
Let \(\mathcal{F}_\nu : H^1(\R^N) \to \R\) be defined for \(v \in H^1(\R^N)\) by
\[
\mathcal{F}_\nu (v) = \int_{\R^N} \abs{\nabla v}^2 + \nu \abs{v}^2 - \int_{\R^N} v_+^{p+1}
\]
By a standard application of the Ekeland variational principle on the manifold \(\mathcal{M}_\nu\)  (see for
example \cite{MW}*{Theorem 4.1}), there exist sequences \((\tilde{v}_n)_{n \in \N} \subset \mathcal{M}_\nu\) and \((\lambda_n)_{n \in \N} \subset \R\) such that \(\mathcal{I}_{\nu}(\tilde{v}_n)\to b_{\nu}\), 
\(\mathcal{I}_{\nu}'(\tilde{v}_n)+\lambda_n \mathcal{F}_\nu'(\tilde{v}_n)\to 0\) in \(H^{-1}(\R^N)\) and \(v_n - \tilde{v}_n \to 0\) in \(H^1(\R^N)\) as \(n \to \infty\). 
The sequence \((\tilde{v}_n)_{n \in \N}\) is a Palais-Smale sequence for the unconstrained functional \(\mathcal{I}_{\nu}\), that is \(\mathcal{I}_{\nu}'(\tilde{v}_n) \to 0\).
Indeed one has 
\begin{align*}
   \lambda_n (p-1)\int_{\R^N} \abs{\nabla v_n}^2 + \abs{v_n}^2 &= -\lambda_n \dualprod{\mathcal{F}_\nu'(\tilde{v}_n)}{\Tilde{v}_n} \\
   &= \dualprod{\mathcal{I}_{\nu}'(\tilde{v}_n)}{\tilde{v}_n}+ o(\norm{\Tilde{v}_n}_{H^1})=o(\norm{\Tilde{v}_n}_{H^1})\:,
\end{align*}
as \(n \to \infty\).
Since there exists a constant \(c>0\) such that \(\norm{v}_{H^1(\R^N)} \geq c\) for every \(v \in \mathcal{M}_\nu\) we deduce that 
\(\lim_{n\to \infty} \lambda_n = 0\).

We compute that
\begin{equation*}
 2 \mathcal{I}_{\nu}(\tilde{v}_n) - \dualprod{\mathcal{I}_{\nu}'(\tilde{v}_n)}{\tilde{v}_n} = \Bigl( 1 - \frac{2}{p+1} \Bigr) \int_{\R^N} (\tilde{v}_n)_+^{p+1}\: dx \to 2 b_{\nu}\:.
\end{equation*}
Hence, 
\begin{equation*}
 \liminf_{n \to \infty} \int_{\R^N} (\tilde{v}_n)_+^{p+1}  > 0\:.
\end{equation*}
Since \((\tilde{v}_n)_{n \in \N}\) is bounded in \(H^1(\R^N)\), we deduce from \cite{Lions1984}*{Part 2, Lemma I.1} (see also \cite{Willem1996}*{Lemma 1.21}) that
\begin{equation*}
 \int_{\R^N} (\tilde{v}_n)_+^{p + 1} \le C \Bigl(\int_{\R^N} \abs{\nabla \tilde{v}_n}^2 + \abs{\tilde{v}_n}^2 \Bigr)\sup_{z \in \R^N} \Bigl(\int_{B(z,1)} (\tilde{v}_n)^{p + 1} \Bigr)^\frac{p - 1}{p + 1}\:.
\end{equation*}
Consequently, there exists a sequence \((y_n)_{n \in \N} \subset \R^N\) such that, if we set \(\Bar{v}_n := \tilde{v}_n(\cdot-y_n)\), we have
\begin{equation}\label{ineqConvergenceMinimizingLimitingBalls}
 \liminf_{n \to \infty} \int_{B(0,1)} (\Bar{v}_n)_+^{p + 1}  > 0.
\end{equation}
Since \((\Bar{v}_n)_{n \in \N}\) is bounded in \(H^1(\R^N)\) and \(\frac{1}{p} > \frac{N - 2}{N}\), we can assume that \(\Bar{v}_n \rightharpoonup \Bar{v}\) in \(H^1(\R^N)\), 
\(\Bar{v}_n \to
\Bar{v}\) in \(L^{p + 1}_{\text{loc}}(\R^N)\) and \(\Bar{v}_n \to \Bar{v}\) almost everywhere. 
By \eqref{ineqConvergenceMinimizingLimitingBalls}, \(\Bar{v} \not\equiv 0\). For all \(v \in H^1(\R^N)\), we have \(\dualprod{\mathcal{I}_{\nu}'(\Bar{v}_n)}{v} \to 0\)
because \(\Bar{v}_n\) is a Palais-Smale sequence, and \(\dualprod{\mathcal{I}_{\nu}'(\Bar{v}_n)}{v} \to \dualprod{\mathcal{I}_{\nu}'(\Bar{v})}{v}\) because \(\Bar{v}_n \rightharpoonup \Bar{v}\). We
conclude that \(\dualprod{\mathcal{I}_{\nu}'(\Bar{v})}{v} = 0\) and so \(\Bar{v}\) is a solution of \eqref{problemLimit}. We compute that
\begin{equation*}
 \frac{p-1}{p+1} \int_{\R^N} (\Bar{v}_n)_+^{p+1} = 2 \mathcal{I}_{\nu}(\Bar{v}_n) - \dualprod{\mathcal{I}_{\nu}'(\Bar{v}_n)}{\Bar{v}_n} \to 2 b_{\nu}
\end{equation*}
and
\begin{equation*}
 \frac{p-1}{p+1} \int_{\R^N} (\Bar{v})_+^{p+1} = 2 \mathcal{I}_{\nu}(\Bar{v}) = 2 b_{\nu}.
\end{equation*}
Therefore \(\norm{(\Bar{v}_n)_+}_{L^{p+1}(\R^N)} \to \norm{(\Bar{v})_+}_{L^{p+1}(\R^N)}\). We infer that \(\Bar{v}_n \to \Bar{v}\) in \(L^{p+1}(\R^N)\). Finally we
can write
\begin{equation*}
\begin{split}
 \norm{\Bar{v}_n-\Bar{v}}_{H^1(\R^N)}^2 &= \dualprod{\mathcal{I}_{\nu}'(\Bar{v}_n)-\mathcal{I}_{\nu}'(\Bar{v})}{\Bar{v}_n-\Bar{v}} \\ &\qquad+ \int_{\R^N} \left(
(\Bar{v}_n)_+^{p} - (\Bar{v})_+^{p} \right) \left( \Bar{v}_n-\Bar{v} \right)\:.
\end{split}
\end{equation*}
Since \(\mathcal{I}_{\nu}'(\Bar{v}) = 0\), \(\mathcal{I}_{\nu}'(\Bar{v}_n) \to 0\) as \(n \to \infty\) and the last term goes to \(0\) by H\"older's inequality, we conclude that
\(\Bar{v}_n \to \Bar{v}\) in \(H^1(\R^N)\). The conclusion follows.
\end{proof}

\subsection{Penalized limit problems}

The two following lemmas will provide information about the limit of sequences of rescaled solutions. The first lemma is due to M.\thinspace del Pino and P.\thinspace Felmer \cite{delPinoFelmer1998}*{Lemma 2.3}.
Let \(\R^N_+ := \left\lbrace x \in \R^N \st x_N > 0 \right\rbrace\).

\begin{lemma}\label{lem:plimpenal1}
Let \(\nu \geq 0\) and \(\mu \in [0, \nu]\). If \(u \in H^1(\R^N)\) is a
solution of 
\begin{equation*}
\left\{ \begin{aligned} 
          - \Delta u + \nu u &= u_+^p & & \text{in \(\R^N_+\)}, \\
	  - \Delta u + \nu u &= \min \bigl( \mu, \abs{u}^{p-1} \bigr)u_+ & &\text{in \(\R^N_-\)},
         \end{aligned}\right.
\end{equation*}
then \(\abs{u}^{p-1} \leq \mu\) in \(\R^N_-\).
\end{lemma}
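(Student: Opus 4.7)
The goal is to show \(u \le k := \mu^{1/(p-1)}\) on \(\R^N_-\); combined with the observation (below) that \(u \ge 0\), this is exactly the stated conclusion. The plan is to combine a subharmonicity argument on \(\R^N_-\), which reduces matters to the hyperplane \(\{x_N = 0\}\), with a sliced Pohozaev-type identity that forces the bound at the hyperplane itself.

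First, testing the equation against \(u_- := \max(-u, 0)\) shows \(u \ge 0\) everywhere, since both \(u_+^p\) and \(\min(\mu, \abs{u}^{p-1}) u_+\) vanish on \(\{u \le 0\}\). Since \(p < \tfrac{N+2}{N-2}\), Moser iteration and Schauder estimates applied to the equation---whose right-hand side is bounded globally by \(u^p\)---give \(u \in C^{1,\alpha}_{\mathrm{loc}}(\R^N) \cap L^\infty(\R^N)\) with \(u(x)\) and \(\abs{\nabla u(x)}\) tending to zero as \(\abs{x}\to\infty\). On \(\R^N_-\), the equation then yields \(-\Delta u = \min(\mu, u^{p-1}) u - \nu u \le (\mu-\nu) u \le 0\), so \(u\) is subharmonic. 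By the maximum principle combined with decay, \(\sup_{\R^N_-} u \le \sup_{\{x_N = 0\}} u\), and it suffices to prove \(u(x', 0) \le k\) for every \(x' \in \R^{N-1}\).

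The key ingredient is a sliced energy identity in the \(x_N\)-direction. Let \(F_+(s) := \frac{s^{p+1}}{p+1}\) and \(F_-(s) := \int_0^s \min(\mu, t^{p-1}) t\, dt\) be the primitives of the respective nonlinearities for \(s \ge 0\), and let \(\nabla'\) denote the gradient in the first \(N-1\) variables. Introduce
\begin{equation*}
 E_\pm(x_N) := \int_{\R^{N-1}} \Bigl(\tfrac{1}{2}\abs{\nabla' u}^2 - \tfrac{1}{2}(\partial_{x_N} u)^2 + \tfrac{\nu}{2} u^2 - F_\pm(u)\Bigr) dx'.
\end{equation*}
Multiplying the equation on \(\R^N_\pm\) by \(\partial_{x_N} u\), integrating over \(\R^{N-1}\), and integrating by parts in \(x'\) produces \(E_+'(x_N) = 0\) for \(x_N > 0\) and \(E_-'(x_N) = 0\) for \(x_N < 0\); each \(E_\pm\) is therefore constant on its half-line. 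Since \(u \in H^1(\R^N)\) and \(F_\pm(u) \le \frac{u^{p+1}}{p+1}\), each of the four integrands appearing in \(E_\pm\) is globally integrable in \((x', x_N)\); thus the constant values of \(E_\pm\) must both be zero.

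By the \(C^{1,\alpha}\) regularity, \(u\) and \(\nabla u\) are continuous across \(\{x_N = 0\}\); taking \(x_N \to 0^\pm\) in \(E_\pm \equiv 0\) and subtracting cancels the gradient and mass terms and leaves
\begin{equation*}
 \int_{\R^{N-1}} \bigl[F_+(u) - F_-(u)\bigr](x', 0)\, dx' = 0.
\end{equation*}
A direct computation gives \(F_+(s) - F_-(s) = \int_0^s (t^{p-1} - \mu)_+ t\, dt \ge 0\), with equality if and only if \(s \le k\). Hence \(u(x', 0) \le k\) for every \(x'\), which together with the subharmonicity step finishes the proof. The main technical obstacle is the rigorous justification of the energy identity up to the interface where the right-hand side of the equation is discontinuous: one needs global \(C^{1,\alpha}\) regularity of \(u\) (supplied by elliptic bootstrap for a bounded measurable right-hand side), together with slice integrability and uniform decay of the relevant integrands, both of which follow from \(u \in H^1(\R^N) \cap L^\infty(\R^N)\) with \(u, \nabla u \to 0\) at infinity.
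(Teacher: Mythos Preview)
Your proof is correct and follows essentially the same approach as the paper: both arguments test the equation against \(\partial_{x_N} u\) to obtain the identity \(\int_{\R^{N-1}}\bigl(F_+(u)-F_-(u)\bigr)(x',0)\,dx'=0\), deduce the bound \(u\le \mu^{1/(p-1)}\) on the hyperplane from the pointwise sign of \(F_+-F_-\), and then apply the maximum principle on \(\R^N_-\). The paper carries out the Pohozaev-type step in one global integration rather than via your sliced energies \(E_\pm\), but the content is the same.
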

\begin{proof}
We follow the argument of M.\thinspace del Pino and P.\thinspace Felmer \cite{delPinoFelmer1998}*{Lemma 2.3}. By elliptic regularity, \(u \in H^2(\R^N) \cap C^1(\R^N)\). 
Thus we can use \(\partial_N u\) as a test function in the equation.
Writing \(g(s) := \min ( \mu, \abs{s}^{p-1})s_+\) and \(G(s) := \int_0^s g(\sigma)\; d\sigma\), we obtain
\begin{equation*}
 \frac{1}{2} \int_{\R^N} \partial_N \bigl( \abs{\nabla u}^2 + \nu \abs{u}^2 \bigr) = \frac{1}{p+1} \int_{\R^N_+}
 \partial_N \bigl( u_+^{p+1} \bigr) + \int_{\R^N_-} \partial_N ( G \circ u)\:.
\end{equation*}
This reduces to
\begin{equation*}
 \int_{\R^{N-1}} \Bigl( G\bigl( u(x', 0) \bigr) - \frac{1}{p+1} \bigl(u(x',0)\bigr)_+^{p+1} \Bigr)\; dx' = 0.
\end{equation*}
Since \(G(u) \leq \frac{u_+^{p+1}}{p+1}\) on \(\R^N\), we have \(G\left( u(x', 0) \right) = 
\frac{1}{p+1} (u(x',0))_+^{p+1}\) for all \(x' \in \R^{N-1}\) and hence,  for every \(x'\in \R^{N-1}\),
\(
 u(x',0) \leq \mu^{\frac{1}{p-1}}\:.
\)
One has on \(\R^N_-\)
\[
 -\Delta u + (\nu-\mu)u \le 0\:. 
\]
Since \(\nu \ge \mu\), we deduce by the maximum principle that \(u \leq \mu^{\frac{1}{p-1}}\) in \(\R^N_-\). 
\end{proof}

The second lemma is an application of the maximum principle.

\begin{lemma}\label{lem:plimpenal2}
Let \(\nu \geq 0\) and \(\mu \in [0, \nu]\). If \(u \in H^1(\R^N)\), \(u\geq 0\), is a solution of
\begin{equation*}
  - \Delta u + \nu u = \min \bigl( \mu, u^{p-1} \bigr)u_+ \qquad
\text{in \(\R^N\)},
\end{equation*}
then \(u \equiv 0\).
\end{lemma}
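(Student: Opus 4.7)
The idea is to test the equation with $u$ itself and exploit that the right-hand side is controlled linearly by $\mu u$.

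Since $u \in H^1(\R^N)$ and the nonlinearity $\min(\mu, u^{p-1}) u_+$ is bounded by $\mu u$ (recall $u \geq 0$), the function $u$ is an admissible test function in the weak formulation. Multiplying the equation by $u$ and integrating over $\R^N$ gives
\[
 \int_{\R^N} \abs{\nabla u}^2 + \nu u^2 \;=\; \int_{\R^N} \min(\mu, u^{p-1})\, u^2 \;\leq\; \mu \int_{\R^N} u^2,
\]
where the last inequality follows from $\min(\mu, u^{p-1}) \leq \mu$. Rearranging,
\[
 \int_{\R^N} \abs{\nabla u}^2 + (\nu - \mu) \int_{\R^N} u^2 \;\leq\; 0.
\]

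The assumption $\mu \leq \nu$ makes both terms on the left nonnegative, so each must vanish. In particular $\int_{\R^N} \abs{\nabla u}^2 = 0$, hence $u$ is a constant. Since $u \in H^1(\R^N) \subset L^2(\R^N)$, the only constant available is $0$, yielding $u \equiv 0$.

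I do not anticipate a real obstacle: the argument is essentially just an energy identity combined with the elementary bound on $\min(\mu, u^{p-1})$. The only mild care needed is in the borderline case $\nu = \mu$ (and in particular $\nu = \mu = 0$), where one cannot immediately conclude $u \equiv 0$ from $\int u^2 = 0$; there one really needs the $L^2$ integrability of $u$ to rule out nonzero constants. This is precisely where the hypothesis $u \in H^1(\R^N)$ (rather than just $u \in H^1_{\mathrm{loc}}$) is essential.
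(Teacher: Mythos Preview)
Your proof is correct and follows essentially the same approach as the paper: test the equation against \(u\), use the bound \(\min(\mu,u^{p-1})\le\mu\) to obtain \(\int_{\R^N}\abs{\nabla u}^2+(\nu-\mu)\int_{\R^N}\abs{u}^2\le 0\), and conclude. Your explicit treatment of the borderline case \(\nu=\mu\) via the \(L^2\)-integrability of \(u\) is a welcome clarification that the paper leaves implicit.
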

\begin{proof}
If \(u\) is a solution, we have
\[
- \Delta u + (\nu - \mu) u \leq 0 \qquad \text{in \(\R^N\)}\:.
\]
Taking \(u\) as a test function, we obtain
\[
\int_{\R^N} \abs{\nabla u}^2 +  (\nu - \mu) \abs{u}^2 \leq 0\:.
\]
Since \(\nu - \mu \geq 0\), this implies that \(u \equiv 0\).
\end{proof}

\section{Asymptotics of families of critical points}
\label{sectAsymptoticsCritical}

In this section we refine the 
asymptotic analysis in \cite{BonheureVanSchaftingen2008}*{Section 5} in order to obtain an estimate of the energy of a critical point \(u_{\varepsilon}\) of
\(\mathcal{J}_{\varepsilon}\)
depending on the number and the location of its local maxima. The corresponding lower estimate was proved in \cite{BonheureVanSchaftingen2008}.

\subsection{Asymptotics on small balls}

The next lemma states that the sequences of rescaled solutions converge in \(C^1_{\text{loc}}(\R^N)\)
to a function in \(H^1(\R^N)\). 

\begin{lemma}
\label{lemuepsrescaledcompact}
Let \((\varepsilon_n)_{n \in \N} \) be a sequence in \(\R^+\) such that
\(\varepsilon_n \to 0\) as \(n \to \infty\), let \((u_{n})_{n \in \N}\) be a sequence of solutions of \(\mathcal{Q}_{\varepsilon_n}\) such that
\[
  \liminf_{n \to \infty} \varepsilon_n^{-N} \mathcal{J}_{\varepsilon_n}(u_n) < \infty
\]
and let \((x_n)_{n \in \N}\) be a  sequence in \(\R^N\) such that \(x_n \to \Bar{x}\) as \(n \to \infty\).
Denote by \((v_n)_{n \in \N}\) the sequence
defined by \(v_n(x) = u_{\varepsilon_n}(x_n + \varepsilon_n x)\). If \(V(\Bar{x}) > 0\),
then there exists \(v \in H^1(\R^N)\) such that, up to a subsequence,
\(
 v_n \to v
\)
in \(C^1_{\mathrm{loc}}(\R^{N})\) 
as \(n \to \infty\).
\end{lemma}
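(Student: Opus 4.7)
The plan is to rescale the penalized equation around $x_n$, derive uniform $H^1_{\mathrm{loc}}$ bounds from the energy estimate, bootstrap via elliptic regularity to $C^{1,\alpha}_{\mathrm{loc}}$ bounds, and then apply Arzel\`a--Ascoli together with Fatou's lemma to recover $v \in H^1(\R^N)$. Since $u_n$ is a critical point of $\mathcal{J}_{\varepsilon_n}$, it lies in $\mathcal{N}_{\varepsilon_n}$, so Lemma~\ref{lem:bounded} applied along the subsequence realising the $\liminf$ gives
\[
\int_{\R^N} \bigl(\varepsilon_n^2 \abs{\nabla u_n}^2 + V \abs{u_n}^2\bigr) \le C\, \mathcal{J}_{\varepsilon_n}(u_n) \le C' \varepsilon_n^N.
\]
After the change of variable $y = x_n + \varepsilon_n x$, this becomes
\[
\int_{\R^N} \bigl( \abs{\nabla v_n}^2 + V(x_n + \varepsilon_n x) \abs{v_n}^2 \bigr)\, dx \le C'.
\]

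Because $V$ is continuous with $V(\Bar{x}) > 0$, there is a neighbourhood of $\Bar{x}$ on which $V \ge V(\Bar{x})/2$; for large $n$ and any compact $K \subset \R^N$, the image $x_n + \varepsilon_n K$ lies in this neighbourhood, so the weighted $L^2$ term controls $\int_K \abs{v_n}^2$ and $(v_n)_{n \in \N}$ is bounded in $H^1(K)$. Rewriting \eqref{problemPNLSE} in the rescaled variable, the function $v_n$ satisfies
\[
-\Delta v_n + V(x_n + \varepsilon_n x) v_n = g_{\varepsilon_n}\bigl(x_n + \varepsilon_n x, v_n\bigr) \quad \text{in \(\R^N\)},
\]
with right-hand side bounded pointwise by $(v_n)_+^p$ thanks to \((g_2)\). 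Since $p$ is Sobolev-subcritical ($\frac{1}{p} > \frac{N-2}{N+2}$), a Moser-type iteration starting from the uniform $H^1_{\mathrm{loc}}$ bound yields $L^\infty_{\mathrm{loc}}$ bounds on $v_n$. Interior $W^{2,q}$ elliptic estimates then give uniform $W^{2,q}_{\mathrm{loc}}$ bounds for every $q < \infty$, and Morrey's embedding produces uniform $C^{1,\alpha}_{\mathrm{loc}}$ bounds.

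By Arzel\`a--Ascoli one can extract a subsequence converging in $C^1_{\mathrm{loc}}(\R^N)$ to some $v \in C^1(\R^N)$. Fatou's lemma, together with the pointwise convergence $V(x_n + \varepsilon_n x) \to V(\Bar{x})$, then yields
\[
\int_{\R^N} \bigl( \abs{\nabla v}^2 + V(\Bar{x}) \abs{v}^2 \bigr) \le \liminf_{n \to \infty} \int_{\R^N} \bigl( \abs{\nabla v_n}^2 + V(x_n + \varepsilon_n x) \abs{v_n}^2 \bigr) \le C',
\]
and since $V(\Bar{x}) > 0$ this forces $v \in H^1(\R^N)$, as required. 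The main technical step is the elliptic bootstrap to $C^{1,\alpha}_{\mathrm{loc}}$; the assumption $V(\Bar{x}) > 0$ is exactly what converts the weighted $L^2$ bound into an ordinary $L^2$ bound on compact sets, without which the compactness argument would fail (and in fact without which the limit $v$ could fail to be in $H^1$).
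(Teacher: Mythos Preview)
Your proof is correct and follows essentially the same approach as the paper: derive a uniform \(H^1_V\) bound from the energy via Lemma~\ref{lem:bounded}, rescale, use \(V(\bar{x})>0\) to turn this into an \(H^1_{\mathrm{loc}}\) bound on the \(v_n\), invoke elliptic regularity for \(C^1_{\mathrm{loc}}\) compactness, and recover \(v\in H^1(\R^N)\) by lower semicontinuity. The paper's version is terser---it packages the bootstrap as ``classical regularity and compactness results'' and uses weak lower semicontinuity on balls where you use Fatou---but the content is the same.
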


This lemma was proved for minimal energy solutions in \cite{BonheureVanSchaftingen2008}*{Lemma 13}. We sketch here the argument in order to highlight that the proof only depends on the fact that \(u_n\) is a solution that satisfies an energy bound.

\begin{proof}
Take \(\varphi \in C^\infty_c(\R^N) \) such that \(\varphi \equiv 1 \) on \(B(0,1)\). 
Set for \(R > 0\) and \(x \in \R^N\) \(\varphi_R(x)=\varphi(\frac{x}{R})\). The sequence
\( (\varphi_R v_n)_{n \in \N}\) is bounded in \(H^1(\R^N)\) for every \(R > 0\). By a diagonal argument,  there exists \(v \in H^1_\mathrm{loc}(\R^N) \) such that \(v_n \to v\) weakly in \(H^1_\mathrm{loc}(\R^N)\) along a subsequence. 

Now note that for every \(R > 0\),
\[
  \int_{B(0,R)} \abs{\nabla v}^2 \leq  \liminf_{n \to \infty} \int_{B(0,R)} \abs{\nabla v_n}^2 \leq  \liminf_{n \to \infty} \int_{\R^N} \abs{\nabla v_n}^2
\]
and
\[
 V(\Bar{x}) \int_{B(0,R)} \abs{v}^2 \leq \liminf_{n \to \infty} \int_{B(0,R)} V \abs{v_n}^2
\leq \liminf_{n \to \infty} \int_{\R^N} V \abs{v_n}^2\:,
\]
so that \(v \in H^1(\R^N)\).

The remainder follows from classical regularity and compactness results.
\end{proof}

\begin{lemma}
\label{lemsmallballs}
Let \((\varepsilon_n)_{n \in \N} \) be a sequence in \(\R^+\) such that
\(\varepsilon_n \to 0\) as \(n \to \infty\), let \((u_{n})_{n \in \N}\) be a sequence of solutions of \(\mathcal{Q}_{\varepsilon_n}\) such that
\[
  \liminf_{n \to \infty} \varepsilon_n^{-N} \mathcal{J}_{\varepsilon_n}(u_n) < \infty
\]
and let \((x_n)_{n \in \N}\) be a  sequence in \(\R^N\) such that \(x_n \to \Bar{x}\) as \(n \to \infty\). If \(V(\Bar{x}) > 0 \) and
\begin{equation*}
 \liminf_{n\to\infty} u_{\varepsilon_n}(x_n) > 0\:,
\end{equation*}
then, \(\Bar{x} \in \Bar{\Lambda}\),
\[
 \limsup_{n \to \infty} \frac{\dist(x_n, \Lambda)}{\varepsilon_n} < \infty\:,
\]
\[
 \lim_{R\to \infty} \limsup_{n\to \infty} \Bigabs{\varepsilon_n^{-N}  \int_{B(x_n, \varepsilon_n R)} \Bigl( \frac{1}{2}
\bigl( \varepsilon_n^2 \abs{\nabla u_{\varepsilon_n}}^2 + V \abs{u_{\varepsilon_n}}^2 \bigr) -
G_{\varepsilon_n}(.,u_{\varepsilon_n}) \Bigr) -\mathcal{C}(\Bar{x})} = 0\:,
\]
and
\[
  \lim_{R\to \infty} \limsup_{n\to \infty} \varepsilon_n^{-N}  \int_{B(x_n, 2\varepsilon_n R) \setminus B(x_n, \varepsilon_n R)} 
\abs{\nabla u_{\varepsilon_n}}^2 + V \abs{u_{\varepsilon_n}}^2=0\:.
\]
\end{lemma}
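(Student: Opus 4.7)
The plan is a blow-up argument. Set $v_n(y) := u_{\varepsilon_n}(x_n + \varepsilon_n y)$. By Lemma~\ref{lemuepsrescaledcompact}, along a subsequence, $v_n \to v$ in $C^1_{\mathrm{loc}}(\R^N)$ for some $v \in H^1(\R^N)$; since $v(0) = \lim_{n\to\infty} u_{\varepsilon_n}(x_n) > 0$ and $u_{\varepsilon_n} \ge 0$, the limit $v$ is nonnegative and nontrivial. To prove $\Bar{x} \in \Bar{\Lambda}$ and $\limsup_{n \to \infty} \dist(x_n, \Lambda)/\varepsilon_n < \infty$ together, I argue by contradiction: if $\dist(x_n, \Lambda)/\varepsilon_n \to \infty$ along a subsequence, then $\Bar{x} \in \R^N \setminus \Lambda$, so $\Bar{x} \ne x_0$ and $\varepsilon_n^2 H(x_n + \varepsilon_n y) \to 0$ locally uniformly in $y$; for each fixed $R$ and $n$ large, $x_n + \varepsilon_n B(0, R) \subset \R^N \setminus \Lambda$, so $v_n$ satisfies the outside penalized equation on $B(0, R)$, and passing to the $C^1_{\mathrm{loc}}$ limit yields
\[
-\Delta v + V(\Bar{x})\, v = \min\bigl(\mu V(\Bar{x}),\, v^{p-1}\bigr)\, v_+ \quad \text{in } \R^N.
\]
Since $V(\Bar{x}) > 0$ and $\mu V(\Bar{x}) \le V(\Bar{x})$, Lemma~\ref{lem:plimpenal2} forces $v \equiv 0$, contradicting $v(0) > 0$.

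With the location of $\Bar{x}$ established, pass to a further subsequence so that $\Lambda_n := (\Lambda - x_n)/\varepsilon_n$ converges locally either to $\R^N$ (when $\dist(x_n, \partial\Lambda)/\varepsilon_n \to \infty$) or, using the smoothness of $\partial\Lambda$ near $\Bar{x}$ and a rotation, to a half-space $\Lambda_\infty$. In the first case $v$ solves $-\Delta v + V(\Bar{x}) v = v_+^p$ on $\R^N$. In the second it solves the mixed equation of Lemma~\ref{lem:plimpenal1} with $\nu = V(\Bar{x})$ and $\mu V(\Bar{x}) \in [0, \nu]$, and that lemma forces $v^{p-1} \le \mu V(\Bar{x})$ in $\R^N \setminus \Lambda_\infty$, collapsing the minimum so that the same equation $-\Delta v + V(\Bar{x}) v = v_+^p$ holds on all of $\R^N$. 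The strong maximum principle gives $v > 0$, and the uniqueness result \cite{Kwong} identifies $v$ as a translate of $U_{V(\Bar{x})}$; in particular $\mathcal{I}_{V(\Bar{x})}(v) = b_{V(\Bar{x})} = \mathcal{C}(\Bar{x})$.

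Finally, the two asymptotic estimates follow from $C^1_{\mathrm{loc}}$ convergence alone. After the change of variables $x = x_n + \varepsilon_n y$, the ball integral equals
\[
\int_{B(0, R)} \Bigl(\tfrac{1}{2}\bigl(\abs{\nabla v_n}^2 + V_n v_n^2\bigr) - \tilde{G}_n(y, v_n)\Bigr)\, dy,
\]
where $V_n(y) := V(x_n + \varepsilon_n y)$ and $\tilde{G}_n(y, s) := G_{\varepsilon_n}(x_n + \varepsilon_n y, s)$; uniform convergence of $v_n$ and $\nabla v_n$ on $B(0, R)$ together with the continuity of $V$ yield, for each fixed $R$, convergence as $n \to \infty$ to $\int_{B(0, R)}\bigl(\tfrac{1}{2}(\abs{\nabla v}^2 + V(\Bar{x}) v^2) - v_+^{p+1}/(p+1)\bigr)\, dy$, which then tends to $\mathcal{I}_{V(\Bar{x})}(v) = \mathcal{C}(\Bar{x})$ as $R \to \infty$ by $v \in H^1(\R^N)$ and Sobolev embedding. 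The annulus integral reduces analogously to $\int_{B(0, 2R) \setminus B(0, R)}(\abs{\nabla v}^2 + V(\Bar{x}) v^2)\, dy$, which vanishes by integrability as $R \to \infty$. The main conceptual hurdle is the boundary case of the previous step: without Lemma~\ref{lem:plimpenal1} one would be left with a genuinely mixed equation on a half-space, unable to identify $v$ with a translate of $U_{V(\Bar{x})}$, and the ball energy could then fail to realize the ground-state value $\mathcal{C}(\Bar{x})$.
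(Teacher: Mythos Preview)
Your argument is correct and follows essentially the same route as the paper: rescale via Lemma~\ref{lemuepsrescaledcompact}, rule out the far-from-\(\Lambda\) case by Lemma~\ref{lem:plimpenal2}, handle the boundary case via Lemma~\ref{lem:plimpenal1} so that the limit \(v\) solves the clean equation \(-\Delta v + V(\Bar{x}) v = v_+^p\), identify \(v\) with a translate of \(U_{V(\Bar{x})}\), and read off the energy limits from \(C^1_{\mathrm{loc}}\) convergence together with \(v \in H^1(\R^N)\). Your write-up is in fact slightly more explicit than the paper's on two points: the identification step (you invoke the maximum principle and Kwong's uniqueness, which the paper leaves implicit in the line ``\(v\) and \(U_{V(\Bar{x})}\) are equal up to a translation''), and the convergence of the \(G_{\varepsilon_n}\) term (the paper only spells out the quadratic part and says the rest is similar). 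One small remark: the nonnegativity of \(u_{\varepsilon_n}\) that you assert is not part of the hypotheses, but it does follow by testing \(\mathcal{Q}_{\varepsilon_n}\) against \((u_{\varepsilon_n})_-\) since \(g_{\varepsilon_n}(x,s)=0\) for \(s\le 0\); you may want to say this in one line rather than take it for granted.
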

\begin{proof}
Set \(v_n(x) := u_{\varepsilon_n}(x_n + \varepsilon_n x)\). By Lemma~\ref{lemuepsrescaledcompact} up to a subsequence, there exists a \(v \in H^1(\R^N)\) such that 
\(v_n \to v\) in \(C^1_{\text{loc}}(\R^N)\). We have
\(v(0) = \lim_{n\to \infty} v_n(0) > 0\) so that \(v \not \equiv 0\).

Let us now prove by contradiction that 
\begin{equation}
\label{limsupdistxneps}
  \limsup_{n \to \infty} \frac{\dist(x_n, \Lambda)}{\varepsilon_n} < \infty\:.
\end{equation}
Up to a subsequence, we can assume that \(\lim_{n \to \infty} \dist(x_n, \Lambda)/\varepsilon_n = \infty\).
Then, since the sequence of characteristic functions \(\chi_n(x) :=
\chi_{\Lambda}(x_n + \varepsilon_n x)\) converges pointwise to \(0\), we have as \(n \to \infty\)
\[
\begin{split}
g_{\varepsilon_n}(x_n + \varepsilon_n \cdot, v_n) & = \min \bigl( \mu \bigl(\varepsilon_n^2 H(x_n + \varepsilon_n \cdot) v_n
+ V(x_n + \varepsilon_n \cdot) \bigr), \abs{v_n}^{p-1} \bigr)(v_n)_+ \\ 
&\qquad \to \min \bigl( \mu V(\Bar{x}) , \abs{v}^{p-1} \bigr)v_+\: ,
\end{split}
\]
in \(L^q_{\mathrm{loc}}(\R^N)\) for \(1 \le q < \frac{2N}{p(N-2)}\),
and thus \(v\) solves the limiting equation
\begin{equation*}
 -\Delta v + V(\Bar{x}) v = \min \bigl(\mu V(\Bar{x}) , \abs{v}^{p-1} \bigr)v_+,
\qquad\text{in \(\R^N\)}.
\end{equation*}
By Lemma \ref{lem:plimpenal2}, \(v \equiv 0\), which is a contradiction. Thus \eqref{limsupdistxneps} holds.

Now, let us assume that
\begin{equation}
\label{limsupdistxnepsc}
  \limsup_{n \to \infty} \frac{\dist(x_n, \R^N \setminus \Lambda)}{\varepsilon_n} = \infty\:.
\end{equation}
Since \(\chi_n(x)\) converges pointwise to \(1\), we have, up to a subsequence, for \(n\) large enough,
\begin{equation*}
g_{\varepsilon_n}(x_n + \varepsilon_n \cdot, v_n) = (v_n)_+^p \to v_+^p\:,
\end{equation*}
in \(L^q_{\mathrm{loc}}(\R^N)\) for \(1 \le q < \frac{2N}{p(N-2)}\).
Hence \(v\) solves the limiting equation
\begin{equation}\label{plim4}
 - \Delta v + V(\Bar{x}) v = v_+^p \qquad \text{in }\ \R^N\:.
\end{equation}

If \eqref{limsupdistxneps} holds but \eqref{limsupdistxnepsc} does not, then
\[
 \limsup_{n \to \infty} \frac{\dist(x_n, \partial \Lambda)}{\varepsilon_n} < \infty\:.
\]
Since \(\Lambda\) is smooth, 
\(
 \chi_n \to \chi_{E},
\)
almost everywhere as \(n \to \infty\), where \(E\) is a half-space. 
By Lemma \ref{lem:plimpenal1}, \(v\) is again a solution of \eqref{plim4}.

In any case, \(v\) is thus a nontrivial solution of \eqref{plim4}. Now we claim that
\begin{multline}\label{eqlimhn}
\lim_{R\to \infty} \lim_{n\to \infty} \varepsilon_n^{-N}  \frac{1}{2}\int_{B(x_n, \varepsilon_n R)} 
\bigl( \varepsilon_n^2 \abs{\nabla u_{\varepsilon_n}}^2 + V \abs{u_{\varepsilon_n}}^2 \bigr) \\ = \frac{1}{2}\int_{\R^N} \abs{\nabla U_{V(\Bar{x})}}^2 + V(\Bar{x}) \abs{U_{V(\Bar{x})}}^2\:.
\end{multline}
For every \(R > 0\), the convergence of \(v_n\) to \(v\) in \(C^1_{\text{loc}}(\R^N)\) implies that
\begin{equation*}
 \lim_{n\to \infty} \varepsilon_n^{-N}  \frac{1}{2}\int_{B(x_n, \varepsilon_n R)} 
\bigl( \varepsilon_n^2 \abs{\nabla u_{\varepsilon_n}}^2 + V \abs{u_{\varepsilon_n}}^2 \bigr) \\ = \frac{1}{2}\int_{B(0, R)} \abs{\nabla v}^2 + V(\Bar{x}) \abs{v}^2\:.
\end{equation*}
Since \(v \in H^1(\R^N)\), and \(v\) and \(U_{V(\Bar{x})}\) are equal up to a translation, we conclude that \eqref{eqlimhn} holds.
The argument for the other limit is similar.
\end{proof}

\subsection{Asymptotics outside small balls}

The solutions decay outside a neighborhood of \(\Lambda\):
\begin{lemma}\label{lemmaAsymptoticsOutsideLambda}
For every open set \(U\) such that \(\Bar{\Lambda} \subset U\), there exists \(C > 0\) such that for every \(\varepsilon > 0\), if \(u \in H^1_V (\R^N)\) is a solution of \(\mathcal{Q}_{\varepsilon}\),
\begin{equation*}
   \int_{\R^N\setminus U} \bigl(\varepsilon^2 \abs{\nabla u}^2 + V \abs{u}^2 \bigr) \le C \varepsilon^2 \int_{\R^N} \bigl(\varepsilon^2 \abs{\nabla u}^2 + V \abs{u}^2 \bigr)\:.
\end{equation*}
\end{lemma}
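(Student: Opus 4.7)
The approach is a Caccioppoli-type energy estimate: test the weak formulation of \eqref{problemPNLSE} against $\eta^2 u$ for a carefully chosen Lipschitz cutoff $\eta$, and exploit the sub-linear bound $(g_2)$ for the penalized nonlinearity outside $\Lambda$. The hypotheses of Theorem~\ref{theoremMainLambda} force $\inf_\Lambda V > 0$ (since $\frac{p+1}{p-1}-\frac{N}{2} > 0$) and, combined with the continuity of $V$ and $\inf_\Lambda V = \sup_{\partial\Lambda} V$, imply that $V \equiv \inf_\Lambda V > 0$ on $\partial\Lambda$, so that $V > 0$ on the compact set $\bar\Lambda$. By continuity, there exist $c_0 > 0$ and an open set $W$ with $\bar\Lambda \subset W \subset \bar W \subset U$ and $V \ge c_0$ on $\bar W$. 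I would then fix a Lipschitz function $\eta : \R^N \to [0,1]$ with $\eta \equiv 0$ on $\bar\Lambda$ and $\eta \equiv 1$ on $\R^N \setminus W$, so that $\supp|\nabla \eta|$ is a compact subset of $\{V \ge c_0\}$.

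Next, I would test \eqref{problemPNLSE} against $\eta^2 u \in H^1_V(\R^N)$. Since $\eta \equiv 0$ on $\bar\Lambda$ and $g_\varepsilon(x,u)\, u \le \mu(\varepsilon^2 H + V) u^2$ on $\R^N\setminus\Lambda$ (for any sign of $u$, because $g_\varepsilon \ge 0$), this yields
\[
\varepsilon^2 \int \eta^2 |\nabla u|^2 + 2 \varepsilon^2 \int \eta u\, \nabla \eta \cdot \nabla u + \int V \eta^2 u^2 \le \mu \varepsilon^2 \int H \eta^2 u^2 + \mu \int V \eta^2 u^2.
\]
Applying Lemma~\ref{lemmaPositivity} to $\eta u$ (extended by density to $H^1(\R^N)$) bounds the $H$-term by $\int |\nabla(\eta u)|^2 = \int \eta^2 |\nabla u|^2 + 2 \int \eta u\, \nabla\eta\cdot\nabla u + \int u^2 |\nabla\eta|^2$. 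Using Young's inequality to absorb the resulting cross term $2(1 - \mu) \varepsilon^2 \int \eta u\, \nabla \eta \cdot \nabla u$ into $\frac{1-\mu}{2} \varepsilon^2 \int \eta^2 |\nabla u|^2 + 2(1 - \mu) \varepsilon^2 \int u^2 |\nabla \eta|^2$ leads to
\[
\frac{1-\mu}{2}\, \varepsilon^2 \int \eta^2 |\nabla u|^2 + (1 - \mu) \int V \eta^2 u^2 \le (2 - \mu)\, \varepsilon^2 \int u^2 |\nabla \eta|^2.
\]

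To conclude, since $V \ge c_0$ on $\supp|\nabla \eta|$, one controls $\int u^2 |\nabla \eta|^2 \le c_0^{-1}\|\nabla\eta\|_\infty^2 \int V u^2 \le c_0^{-1}\|\nabla\eta\|_\infty^2\, \|u\|_\varepsilon^2$; since $\eta \equiv 1$ on $\R^N \setminus U \subset \R^N \setminus W$, the claim follows with a constant $C$ depending on $U$ through $c_0$ and $\|\nabla\eta\|_\infty$. The crux of the argument is the construction of $\eta$: the decisive factor $\varepsilon^2$ in the conclusion appears only because $\int u^2|\nabla\eta|^2$ is controlled by $\|u\|_\varepsilon^2$ \emph{without} a further factor $\varepsilon^{-2}$, and this requires $V$ to be bounded below on $\supp|\nabla\eta|$, which is exactly what the positivity of $V$ on a neighborhood of $\bar\Lambda$ provides. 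Relying instead on the Hardy inequality from Lemma~\ref{lemmaPositivity} to estimate $\int u^2|\nabla\eta|^2$ directly would give only $\int u^2|\nabla\eta|^2 \lesssim \int |\nabla u|^2 = \varepsilon^{-2} \int \varepsilon^2 |\nabla u|^2$, costing precisely the factor that makes the estimate useful.
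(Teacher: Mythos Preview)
Your proof is correct and follows essentially the same approach as the paper: test \eqref{problemPNLSE} against \(\eta^2 u\) for a cutoff vanishing on \(\bar\Lambda\), use \((g_2)\) and Lemma~\ref{lemmaPositivity} to absorb the nonlinearity, and exploit \(\inf V > 0\) on \(\supp|\nabla\eta|\) to gain the factor \(\varepsilon^2\). The only cosmetic difference is that the paper uses the identity \(\int \nabla u\cdot\nabla(\psi^2 u) = \int |\nabla(\psi u)|^2 - \int |\nabla\psi|^2 u^2\) to write the energy directly in terms of \(\psi u\), so Lemma~\ref{lemmaPositivity} applies without any Young-inequality bookkeeping; your expanded-then-absorbed version reaches the same conclusion with a slightly worse constant.
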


\begin{proof}
Since \(V\) is continuous and \(\inf_{\Lambda} V > 0\), we can assume without loss of generality that \(\inf_{U} V > 0\).
Take \(\psi \in C^\infty_c(\R^N)\) such that \(\psi = 0\) on \(\Bar{\Lambda}\) and \(\psi=1\) on \(\R^N \setminus U\). By taking \(\psi^2 u_{n}\) as a test function in \eqref{problemPNLSE}, we obtain
\begin{multline*}
 \int_{\R^N} \bigl(\varepsilon^2 \abs{\nabla (\psi u)}^2 + V \abs{\psi u}^2\bigr) 
=\int_{\R^N} g_{\varepsilon} (x, u(x))\psi(x)^2 u(x) \: dx + \int_{\R^N} \varepsilon^2 \abs{\nabla \psi}^2 \abs{u}^2  \: .
\end{multline*}
Since \(\psi = 0\) in \(\Lambda\), we deduce from \( (g_2) \) and Lemma~\ref{lemmaPositivity} that
\begin{align*}
 \int_{\R^N} g_{\varepsilon} (x, u(x))\psi (x)^2 u(x) \:dx &\leq \mu \int_{\R^N} (V+\varepsilon^2 H)\abs{\psi u}^2 \\
 &\leq \mu \int_{\R^N} \bigl(\varepsilon^2 \abs{\nabla (\psi u)}^2 + V \abs{\psi u}^2\bigr)\: .
\end{align*}
Therefore, since \(\supp \nabla \psi \subset U \setminus \Bar{\Lambda}\) and \(\inf_{U} V > 0\), we have
\[
 (1-\mu) \int_{\R^N} \bigl(\varepsilon^2 \abs{\nabla (\psi u)}^2 + V \abs{\psi u}^2\bigr)
\leq \int_{\R^N} \varepsilon^2 \abs{\nabla \psi}^2 \abs{u}^2 \leq C \varepsilon^2 \int_{U \setminus \Bar{\Lambda}} V \abs{u}^2\:.\qedhere
\]
\end{proof}

Now we have an estimate outside small balls. 
\begin{lemma}\label{lemoutsmallballs}
Let \((\varepsilon_n)_{n \in \N} \) be a sequence in \(\R^+\) such that
\(\varepsilon_n \to 0\) as \(n \to \infty\), let \((u_{n})_{n \in \N}\) be a sequence of solutions of \((\mathcal{Q}_{\varepsilon_n})\) such that
\[
 \limsup_{n \to \infty} \varepsilon_n^{-N} \mathcal{J}_{\varepsilon_n} (u_n) < \infty\: ,
\]
and let \((x^i_n)_{n \in \N} \subset \R^N\), \(1\leq i \leq M\), be sequences such that \(x^i_n \to \Bar{x}^i \in \R^N\) as \(n \to \infty\).
If for every \(i \in \{1, \dotsc, M\}\), \(V(\Bar{x}^i) > 0\) and 
\begin{equation*}
 \liminf_{n\to\infty} u_{n}(x^i_n) > 0\:,
\end{equation*}
then
\begin{equation*}
 \liminf_{R\to \infty} \liminf_{n\to \infty} \varepsilon_n^{-N}\Bigl( \int_{\R^N\setminus \mathcal{B}_n(R)} \frac{1}{2}
\bigl( \varepsilon_n^2 \abs{\nabla u_{n}}^2 + V \abs{u_{n}}^2 \bigr) - G_{\varepsilon_n}(.,u_{n}) \Bigr) \geq 0\:,
\end{equation*}
where \(\mathcal{B}_n(R) := \bigcup_{i=1}^M B(x^i_n, \varepsilon_n R)\). 
 Furthermore, if 
\begin{equation*}
 \inf\bigl\{ \limsup_{n \to \infty} \norm{u_{n}}_{L^{\infty}(U \setminus \mathcal{B}_n(R))} \st \text{\(R > 0\), \(U\) is open and \(\Bar{\Lambda} \subset U\)}\bigr\} = 0\:, 
\end{equation*}
 then
 \begin{equation*}
  \lim_{R\to \infty} \limsup_{n\to \infty} \varepsilon_n^{-N} \Bigabs{\int_{\R^N\setminus \mathcal{B}_n(R)} \frac{1}{2}
\bigl( \varepsilon_n^2 \abs{\nabla u_{n}}^2 + V \abs{u_{n}}^2 \bigr) - G_{\varepsilon_n}(.,u_{n})} = 0\:.
 \end{equation*}
\end{lemma}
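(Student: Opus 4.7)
The plan is to test the penalized equation \eqref{problemPNLSE} against the square of a cutoff supported outside $\mathcal{B}_n(R)$ and to exploit the pointwise inequality $\tfrac{1}{2} g_{\varepsilon_n}(x,s)s - G_{\varepsilon_n}(x,s) \ge 0$, which follows at once from $(g_3)$ (both inside and outside $\Lambda$). Fix a smooth $\psi_{n,R} : \R^N \to [0,1]$ with $\psi_{n,R} \equiv 0$ on $\mathcal{B}_n(R)$, $\psi_{n,R} \equiv 1$ outside $\mathcal{B}_n(2R)$, and $|\nabla \psi_{n,R}| \le C/(\varepsilon_n R)$. Writing $e_n := \tfrac{1}{2}(\varepsilon_n^2 |\nabla u_n|^2 + V u_n^2) - G_{\varepsilon_n}(\cdot, u_n)$, testing against $\psi_{n,R}^2 u_n$ and integrating by parts produces the identity
\begin{equation*}
\int \psi_{n,R}^2 e_n = \int \psi_{n,R}^2 \bigl[\tfrac{1}{2} g_{\varepsilon_n}(\cdot, u_n) u_n - G_{\varepsilon_n}(\cdot, u_n)\bigr] - \varepsilon_n^2 \int \psi_{n,R}\, u_n \nabla u_n \cdot \nabla \psi_{n,R}\:.
\end{equation*}

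For the first conclusion, the bracketed integrand is non-negative by $(g_3)$. The cross term is dominated by Cauchy--Schwarz in terms of the energy of $u_n$ on the annulus $\mathcal{A}_n := \mathcal{B}_n(2R) \setminus \mathcal{B}_n(R)$, which is contained in $\bigcup_{i}(B(x_n^i, 2\varepsilon_n R) \setminus B(x_n^i, \varepsilon_n R))$; applying the last conclusion of Lemma~\ref{lemsmallballs} pointwise in $i$ (together with $V$ being bounded below on a neighbourhood of $\bar\Lambda$), this annular energy is $o(\varepsilon_n^N)$ as $n \to \infty$ then $R \to \infty$, so the cross term is of the same order. To replace $\psi_{n,R}^2$ by $\chi_{\R^N \setminus \mathcal{B}_n(R)}$, the annular integral of $|e_n|$ must also be $o(\varepsilon_n^N)$: the $H^1_V$-part is again given by Lemma~\ref{lemsmallballs}, and the $L^{p+1}$-part vanishes after rescaling because the limits $v^i$ from Lemma~\ref{lemuepsrescaledcompact} belong to $H^1(\R^N) \subset L^{p+1}(\R^N)$, so $\int_{B(0,2R)\setminus B(0,R)} (v^i_+)^{p+1} \to 0$ as $R \to \infty$.

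For the second conclusion, the smallness hypothesis upgrades this lower bound to a two-sided estimate. Given $\delta > 0$, pick $U \supset \bar\Lambda$ and $R$ with $\limsup_n \|u_n\|_{L^\infty(U \setminus \mathcal{B}_n(R))} \le \delta$; then $(u_n)_+^{p-1} \le \delta^{p-1}$ on $\Lambda \cap \supp \psi_{n,R}$ for $n$ large, so
$\int \chi_\Lambda \psi_{n,R}^2 (u_n)_+^{p+1} \le \tfrac{\delta^{p-1}}{\inf_\Lambda V} \int V \psi_{n,R}^2 u_n^2$.
Inserting this together with $(g_2)$ on $\R^N \setminus \Lambda$ into the test identity $\int \varepsilon_n^2 |\nabla(\psi_{n,R} u_n)|^2 + V|\psi_{n,R} u_n|^2 = \int g_{\varepsilon_n}(\cdot,u_n) \psi_{n,R}^2 u_n + \int \varepsilon_n^2 |\nabla \psi_{n,R}|^2 u_n^2$, and absorbing the $\varepsilon_n^2 H u_n^2$ penalty into $\int \varepsilon_n^2 |\nabla(\psi_{n,R} u_n)|^2$ by means of Lemma~\ref{lemmaPositivity} applied to $\psi_{n,R} u_n$, one obtains, for $\delta$ small,
\begin{equation*}
\Bigl(1-\mu - \tfrac{\delta^{p-1}}{\inf_\Lambda V}\Bigr) \!\! \int \bigl(\varepsilon_n^2 |\nabla(\psi_{n,R} u_n)|^2 + V|\psi_{n,R} u_n|^2\bigr) \le \int \varepsilon_n^2 |\nabla \psi_{n,R}|^2 u_n^2 + o(\varepsilon_n^N)\:,
\end{equation*}
where the correction term absorbs the contribution coming from $\R^N \setminus U$, itself $o(\varepsilon_n^N)$ by Lemma~\ref{lemmaAsymptoticsOutsideLambda} (and one more application of Lemma~\ref{lemmaPositivity}, with an auxiliary cutoff supported away from $\bar\Lambda$, to control $\varepsilon_n^2 \int_{\R^N \setminus U} H u_n^2$). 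The right-hand side is $o(\varepsilon_n^N)$ by Lemma~\ref{lemsmallballs}, so the localized energy is $o(\varepsilon_n^N)$; then the test identity gives $\int \psi_{n,R}^2 g_{\varepsilon_n}(\cdot, u_n) u_n = o(\varepsilon_n^N)$, whence $\int \psi_{n,R}^2 |e_n| = o(\varepsilon_n^N)$, and the comparison with $\int_{\R^N \setminus \mathcal{B}_n(R)}$ is as in the first part.

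The main difficulty lies in this last absorption step: the penalization weight $\mu$, the Hardy-type penalty from $H$ (controlled only through Lemma~\ref{lemmaPositivity}), and the subcritical nonlinearity on $\Lambda$ must all be simultaneously reabsorbed into a single coercive $H^1_V$-norm on the left, which forces the restriction $\delta^{p-1} < (1-\mu)\inf_\Lambda V$ on how small $\delta$ must be chosen before fixing $U$ and $R$.
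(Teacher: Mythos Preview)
Your argument is correct and, for the first (lower-bound) assertion, essentially identical to the paper's: both test \eqref{problemPNLSE} against a cutoff supported off \(\mathcal{B}_n(R)\), use \((g_3)\) to get nonnegativity of \(\tfrac{1}{2}g_{\varepsilon_n}(\cdot,u_n)u_n - G_{\varepsilon_n}(\cdot,u_n)\), and kill the annular cross terms via the last statement of Lemma~\ref{lemsmallballs}.

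For the second (two-sided) assertion the two routes diverge. The paper does \emph{not} show that the localized \(H^1_V\)-energy itself is \(o(\varepsilon_n^N)\); instead it uses the first-step identity \(\int_{\R^N\setminus\mathcal{B}_n(R)}(\varepsilon_n^2\abs{\nabla u_n}^2+V\abs{u_n}^2 - g_{\varepsilon_n}(\cdot,u_n)u_n)=o(\varepsilon_n^N)\) and bounds the remaining piece \(\bigl|G_{\varepsilon_n}(\cdot,u_n)-\tfrac{1}{2}g_{\varepsilon_n}(\cdot,u_n)u_n\bigr|\le \tfrac{1}{2}g_{\varepsilon_n}(\cdot,u_n)u_n\) directly, splitting the latter into \(U\setminus\mathcal{B}_n(R)\) (where \((g_2)\) and the \(L^\infty\) hypothesis give \((u_n)_+^{p+1}\le \norm{u_n}_{L^\infty}^{p-1}(\inf_U V)^{-1}V\abs{u_n}^2\)) and \(\R^N\setminus U\) (where \((g_2)\) and Lemma~\ref{lemmaAsymptoticsOutsideLambda} apply). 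Your absorption argument instead proves the stronger intermediate fact \(\int(\varepsilon_n^2\abs{\nabla(\psi_{n,R}u_n)}^2+V\abs{\psi_{n,R}u_n}^2)=o(\varepsilon_n^N)\). Both are valid; the paper's route avoids tracking the constant \(1-\mu-\delta^{p-1}/\inf_\Lambda V\), while yours yields a cleaner intermediate conclusion that could be reused elsewhere.

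One simplification of your write-up: the \(o(\varepsilon_n^N)\) ``correction from \(\R^N\setminus U\)'' and the auxiliary cutoff invoking Lemma~\ref{lemmaAsymptoticsOutsideLambda} are unnecessary. Since \(H\equiv 0\) on \(\Lambda\), applying Lemma~\ref{lemmaPositivity} once to \(\psi_{n,R}u_n\) already absorbs \emph{all} of \(\mu\int\varepsilon_n^2 H\abs{\psi_{n,R}u_n}^2\) into \(\mu\int\varepsilon_n^2\abs{\nabla(\psi_{n,R}u_n)}^2\); the \(L^\infty\) bound is only needed on \(\Lambda\cap\supp\psi_{n,R}\subset U\setminus\mathcal{B}_n(R)\), so no residual term on \(\R^N\setminus U\) appears.
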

The first assertion was proved in \cite{BonheureVanSchaftingen2008}*{Lemma 15}. 
\begin{proof}
First we claim that 
\[
 \lim_{R\to \infty} \limsup_{n\to \infty} \varepsilon_n^{-N} \Bigabs{\int_{\R^N\setminus \mathcal{B}_n(R)} 
\bigl( \varepsilon_n^2 \abs{\nabla u_{n}}^2 + V \abs{u_{n}}^2 -  g_{\varepsilon_n}(.,u_{n}) u_n \bigr)}= 0\: .
\]
This is proved in \cite{BonheureVanSchaftingen2008}*{Lemma 15} by taking a suitable family of test functions and using Lemma~\ref{lemuepsrescaledcompact}. We do not need to go to a subsequence since by Lemma~\ref{lemsmallballs}, 
\[
  \lim_{R \to \infty} \limsup_{n \to \infty} \int_{B(x_n^i, \varepsilon_n R) \setminus B(x_n^i, \varepsilon_n R/2)} \varepsilon_n^2 \bigl(\abs{\nabla u_{n}}^2 + V \abs{u_{n}}^2\bigr) = 0 \: .
\]

The first assertion follows, as in \cite{BonheureVanSchaftingen2008}*{Lemma 15} from the inequality
\begin{multline*}
\frac{1}{2}\int_{\R^N\setminus \mathcal{B}_n(R)} 
\bigl( \varepsilon_n^2 \abs{\nabla u_{n}}^2 + V \abs{u_{n}}^2 \bigr) - \frac{1}{2} \int_{\R^N\setminus \mathcal{B}_n(R)} g_{\varepsilon_n}(x,u_{n}(x)) u_n(x) \: dx \\
\le \frac{1}{2}\int_{\R^N\setminus \mathcal{B}_n(R)} 
\bigl( \varepsilon_n^2 \abs{\nabla u_{n}}^2 + V \abs{u_{n}}^2 \bigr) - \int_{\R^N\setminus \mathcal{B}_n(R)} G_{\varepsilon_n}(x,u_{n}(x)) \:dx\:.
\end{multline*}

For the second assertion, we have by \((g_2)\) and \((g_3)\),
\begin{multline*}
 \Bigabs{\int_{\R^N\setminus \mathcal{B}_n(R)} G_{\varepsilon_n}(x,u_{n}(x)) - \frac{1}{2} g_{\varepsilon_n}(x,u_{n}(x)) u_{n}(x)\: dx} \\
\le \int_{\R^N\setminus \mathcal{B}_n(R)} \frac{1}{2} g_{\varepsilon_n}(x,u_{n}(x)) u_{n}(x)\: dx\\
\leq \frac{1}{2} \int_{U \setminus \mathcal{B}_n(R)} (u_{n})_+^{p+1}
+\frac{\mu}{2} \int_{\R^N\setminus U} \bigl( \varepsilon_n^2 H  + V\bigr) \abs{u_{n}}^2\:.
\end{multline*}
We compute that 
\begin{equation*}
\begin{split}
\int_{U \setminus \mathcal{B}_n(R)} \abs{u_{n}}^{p+1}
&\leq C \norm{u_{n}}^{p-1}_{L^{\infty}(U \setminus \mathcal{B}_n(R))}  \int_{U \setminus \mathcal{B}_n(R)} \abs{u_{n}}^{2}   \\
&\le \frac{1}{(\inf_U V)^2}  \norm{u_{n}}^{p-1}_{L^{\infty} (U\setminus \mathcal{B}_n(R))} \int_{\R^N} \bigl( \varepsilon_n^2 \abs{\nabla u_{n}}^2 + V \abs{u_{n}}^2 \bigr)\:.
\end{split}
\end{equation*}
In view of Lemma~\ref{lemmaAsymptoticsOutsideLambda}, for every \(U \supset \Lambda\) there exists \(C > 0\) such that we have
\begin{multline*}
\Bigabs{\int_{\R^N\setminus \mathcal{B}_n(R)} G_{\varepsilon_n}(x,u_{n}(x)) - \frac{1}{2} g_{\varepsilon_n}(x,u_{n}(x)) u_{n}(x)\: dx}\\
\le \frac{1}{2} \Bigl(\frac{1}{(\inf_U V)^2}  \norm{u_{n}}^{p-1}_{L^{\infty} (U\setminus \mathcal{B}_n(R))} + C \mu \varepsilon_n^2\Bigr) \int_{\R^N} \bigl( \varepsilon_n^2 \abs{\nabla u_{n}}^2 + V \abs{u_{n}}^2 \bigr)\:.
\end{multline*}
We conclude by taking \(U \supset \Bar{\Lambda}\) small enough and \(R\) and \(n\) large enough, in view of the hypothesis and Lemma~\ref{lem:bounded}.
\end{proof}

\subsection{Conclusion}
We can now state and prove the main result of this section is

\begin{proposition}\label{estim:inf2}
Let \((\varepsilon_n)_{n \in \N} \) be a sequence in \(\R^+\) such that
\(\varepsilon_n \to 0\) as \(n \to \infty\), let \((u_{n})_{n \in \N}\) be a sequence of solutions of \(\mathcal{Q}_{\varepsilon_n}\) such that
\[
 \limsup_{n \to \infty} \varepsilon_n^{-N} \mathcal{J}_{\varepsilon_n} (u_n) < \infty\: ,
\]
and let \((x^i_n)_{n \in \N} \subset \R^N\), \(1\leq i \leq M\), be sequences such that \(x^i_n \to \Bar{x}^i \in \R^N\) as \(n \to \infty\).
If for every \(i \in \{1, \dotsc, M\}\), \(V(\Bar{x}^i) > 0\) and
\begin{equation*}
 \liminf_{n\to\infty} u_{n}(x^i_n) > 0\:,
\end{equation*}
and if for every \(i,j \in \{ 1, \dots, M \}\) such that \(i\neq j\),
\begin{equation*}
 \lim_{n\to\infty} \frac{\abs{x^i_n-x^j_n}}{\varepsilon_n} = +\infty\:,
\end{equation*}
then for every \(i \in \{ 1, \dots, M \}\), \(\Bar{x}^i \in \Bar{\Lambda}\), 
\begin{equation*}
 \lim_{n\to\infty} \frac{\dist(x^i_n, \Lambda)}{\varepsilon_n} < +\infty\:,
\end{equation*}
and
\begin{equation*}
 \liminf_{n\to\infty} \varepsilon_n^{-N} \mathcal{J}_{\varepsilon_n}(u_{\varepsilon_n}) \geq 
\sum_{i=1}^M \mathcal{C}(\Bar{x}^i)\:.
\end{equation*}
Furthermore, if 
\begin{equation*}
 \inf\Bigl\{ \limsup_{n \to \infty} \norm{u_{n}}_{L^{\infty}(U \setminus \mathcal{B}_n(R))} \st \text{\(R > 0\), \(U\) is open and \(\Bar{\Lambda} \subset U\)}\Bigr\} = 0\:, 
\end{equation*}
then
\begin{equation*}
  \lim_{n\to\infty}\varepsilon_n^{-N} \mathcal{J}_{\varepsilon_n}(u_{n})=
  \sum_{i=1}^M \mathcal{C}(\Bar{x}^i)\:.
\end{equation*}
\end{proposition}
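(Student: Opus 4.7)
My plan is to decompose the rescaled energy $\varepsilon_n^{-N}\mathcal{J}_{\varepsilon_n}(u_n)$ into $M$ localized contributions on the balls $B(x_n^i,\varepsilon_n R)$ plus a tail integral over $\R^N\setminus\mathcal{B}_n(R)$, then pass to the limit first in $n\to\infty$ and afterwards in $R\to\infty$.

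The first step is to apply Lemma~\ref{lemsmallballs} separately to each sequence $(x_n^i)_{n\in\N}$. Its hypotheses $V(\bar x^i)>0$, $\liminf_{n\to\infty} u_n(x_n^i)>0$, together with the uniform bound on $\varepsilon_n^{-N}\mathcal{J}_{\varepsilon_n}(u_n)$, are exactly those being assumed here. This immediately delivers the two pointwise conclusions $\bar x^i\in\bar\Lambda$ and $\limsup_{n\to\infty}\dist(x_n^i,\Lambda)/\varepsilon_n<\infty$, together with the local concentration of energy
\[
\lim_{R\to\infty}\limsup_{n\to\infty}\Bigabs{\varepsilon_n^{-N}\int_{B(x_n^i,\varepsilon_n R)}\!\Bigl(\tfrac{1}{2}\bigl(\varepsilon_n^2\abs{\nabla u_n}^2+V\abs{u_n}^2\bigr)-G_{\varepsilon_n}(\cdot,u_n)\Bigr)-\mathcal{C}(\bar x^i)}=0.
\]

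The separation hypothesis $\abs{x_n^i-x_n^j}/\varepsilon_n\to+\infty$ for $i\neq j$ guarantees that, for each fixed $R>0$, the balls $B(x_n^i,\varepsilon_n R)$ are pairwise disjoint once $n$ is sufficiently large. Denoting by $e_n:=\tfrac{1}{2}(\varepsilon_n^2\abs{\nabla u_n}^2+V\abs{u_n}^2)-G_{\varepsilon_n}(\cdot,u_n)$ the energy density, I would then write the exact decomposition
\[
\varepsilon_n^{-N}\mathcal{J}_{\varepsilon_n}(u_n)=\sum_{i=1}^{M}\varepsilon_n^{-N}\int_{B(x_n^i,\varepsilon_n R)}e_n+\varepsilon_n^{-N}\int_{\R^N\setminus\mathcal{B}_n(R)}e_n.
\]
Taking $\liminf_{n\to\infty}$ termwise, Lemma~\ref{lemsmallballs} handles each of the $M$ localized summands, while the first assertion of Lemma~\ref{lemoutsmallballs} ensures that the tail integral is asymptotically nonnegative when one sends $n\to\infty$ and then $R\to\infty$. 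Letting $R\to\infty$ yields the desired lower bound $\liminf_{n\to\infty}\varepsilon_n^{-N}\mathcal{J}_{\varepsilon_n}(u_n)\geq\sum_{i=1}^M\mathcal{C}(\bar x^i)$.

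For the final equality statement, the extra hypothesis on $\norm{u_n}_{L^{\infty}(U\setminus\mathcal{B}_n(R))}$ is precisely the trigger for the second assertion of Lemma~\ref{lemoutsmallballs}, which upgrades the bound on the tail integral to vanishing in absolute value. Combined with the local concentration on each ball, this turns the inequality into an equality. I do not expect a serious obstacle: the only point requiring care is the bookkeeping of the iterated limits (first $n\to\infty$ with $R$ held fixed, then $R\to\infty$) and the uniform disjointness of the balls for large $n$, both of which follow directly from the separation hypothesis and the two preceding lemmas.
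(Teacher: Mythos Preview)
Your proposal is correct and follows exactly the approach the paper indicates: the paper's own proof simply states that the result follows from Lemmas~\ref{lemsmallballs} and \ref{lemoutsmallballs} and refers to external references for the details you have written out. Your decomposition into the $M$ disjoint balls plus the tail, followed by the iterated limits $n\to\infty$ then $R\to\infty$, is precisely the intended argument.
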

\begin{proof}
This follows from Lemmas~\ref{lemsmallballs} and \ref{lemoutsmallballs} (see \citelist{\cite{BonheureVanSchaftingen2008}*{Proposition 16}\cite{MorozVanSchaftingen2010}*{Lemma 4.3}} for the details).
\end{proof}

\section{Asymptotics of families of almost minimizers}
\label{sectAsymptoticsMinimizers}

\subsection{Families of minimizers}

Let us recall how the results of Section~\ref{sectAsymptoticsCritical} allow to study the asymptotics of \(\inf_{\mathcal{N}_\varepsilon} \mathcal{J}_\varepsilon\).

\begin{proposition}
\label{propAsymptotics}
If \(\inf_\Lambda V > 0\), then
\[
  \lim_{\varepsilon \to 0} \varepsilon^{-N} \inf_{\mathcal{N}_\varepsilon} \mathcal{J}_\varepsilon 
= \inf_\Lambda \mathcal{C}\:.
\]
\end{proposition}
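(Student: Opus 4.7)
The plan is to prove the two matching inequalities $\limsup_{\varepsilon\to 0}\varepsilon^{-N}\inf_{\mathcal{N}_\varepsilon}\mathcal{J}_\varepsilon \le \inf_\Lambda\mathcal{C}$ and $\liminf_{\varepsilon\to 0}\varepsilon^{-N}\inf_{\mathcal{N}_\varepsilon}\mathcal{J}_\varepsilon \ge \inf_\Lambda\mathcal{C}$ separately. For the upper bound I would fix \(y \in \Lambda\) and build a test function by concentrating the limit ground state at \(y\): take \(\eta \in C^\infty_c(\Lambda)\) with \(\eta \equiv 1\) near \(y\) and set \(w_\varepsilon(x) := \eta(x)\, U_{V(y)}((x-y)/\varepsilon)\). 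Since \(V(y) \ge \inf_\Lambda V > 0\), the function \(U_{V(y)}\) decays exponentially, so the truncation by \(\eta\) and the replacement of \(V\) by the constant \(V(y)\) on the rescaled support contribute only \(o(\varepsilon^N)\); a change of variables yields
\[
  \varepsilon^{-N} \int_{\R^N}\!\bigl(\varepsilon^2|\nabla w_\varepsilon|^2 + V w_\varepsilon^2\bigr) \to \int_{\R^N}\!\bigl(|\nabla U_{V(y)}|^2 + V(y) U_{V(y)}^2\bigr)
\]
and similarly \(\varepsilon^{-N}\int(w_\varepsilon)_+^{p+1} \to \int U_{V(y)}^{p+1}\). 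Since \(\supp w_\varepsilon \subset \Lambda\), the nonlinearity \(g_\varepsilon\) coincides with \(s_+^p\) on the support, so projecting onto \(\mathcal{N}_\varepsilon\) reduces to a scalar equation for \(t_\varepsilon > 0\), and \(U_{V(y)} \in \mathcal{M}_{V(y)}\) forces \(t_\varepsilon \to 1\). The Nehari identity
\[
  \mathcal{J}_\varepsilon(t_\varepsilon w_\varepsilon) = \Bigl(\tfrac{1}{2} - \tfrac{1}{p+1}\Bigr) t_\varepsilon^2 \int_{\R^N}\bigl(\varepsilon^2|\nabla w_\varepsilon|^2 + V w_\varepsilon^2\bigr)
\]
then gives \(\varepsilon^{-N}\mathcal{J}_\varepsilon(t_\varepsilon w_\varepsilon) \to \mathcal{I}_{V(y)}(U_{V(y)}) = \mathcal{C}(y)\), and taking the infimum over \(y \in \Lambda\) finishes the upper bound.

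For the lower bound, let \(u_\varepsilon\) be a minimizer of \(\mathcal{J}_\varepsilon\) on \(\mathcal{N}_\varepsilon\) provided by Proposition~\ref{prop:Minimizer}. The upper bound just established ensures \(\limsup\varepsilon^{-N}\mathcal{J}_\varepsilon(u_\varepsilon) < \infty\). By Lemma~\ref{lem:1}, for each \(\varepsilon\) there is a point \(x_\varepsilon \in \overline{\Lambda}\) with \(u_\varepsilon(x_\varepsilon)^{p-1} \ge \tfrac{1}{2} V(x_\varepsilon) \ge \tfrac{1}{2}\inf_\Lambda V\), hence \(\liminf u_\varepsilon(x_\varepsilon) > 0\). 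Along any sequence \(\varepsilon_n \to 0\), pass to a subsequence so that \(x_{\varepsilon_n} \to \bar x \in \overline{\Lambda}\); then \(V(\bar x) \ge \inf_\Lambda V > 0\), and Proposition~\ref{estim:inf2} with \(M = 1\) yields
\[
  \liminf_{n\to\infty}\varepsilon_n^{-N}\mathcal{J}_{\varepsilon_n}(u_{\varepsilon_n}) \ge \mathcal{C}(\bar x) \ge \inf_{\overline{\Lambda}}\mathcal{C} = \inf_\Lambda\mathcal{C},
\]
where the last equality follows from continuity of \(\mathcal{C}\) on the compact set \(\overline{\Lambda}\) (guaranteed by continuity and positivity of \(V\) there).

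The main technical point is the upper bound construction: the contributions of the cut-off tail and of the mismatch \(V - V(y)\) near \(y\) must be of order \(o(\varepsilon^N)\), which follows from the exponential decay of \(U_{V(y)}\) and the continuity of \(V\) at \(y\). The verification that \(t_\varepsilon \to 1\) uniformly enough to preserve the sharp limit is then automatic. Everything else is a direct invocation of Proposition~\ref{estim:inf2} and Lemma~\ref{lem:1}, with the Nehari identity playing the role of the bridge between the rescaled energies and the limit quantity \(\mathcal{C}(y)\).
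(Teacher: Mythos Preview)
Your proposal is correct and follows essentially the same route as the paper's sketch: the upper bound via truncated rescaled ground states \(U_{V(y)}((\cdot-y)/\varepsilon)\) projected onto \(\mathcal{N}_\varepsilon\), and the lower bound via the Nehari minimizer from Proposition~\ref{prop:Minimizer} together with Lemma~\ref{lem:1} and Proposition~\ref{estim:inf2}. The only point you leave implicit is the continuity of \(u_\varepsilon\) (needed to actually locate a point \(x_\varepsilon\) realizing the bound from Lemma~\ref{lem:1}); the paper invokes classical elliptic regularity for this.
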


This has been proved by M.\thinspace del Pino and P.\thinspace Felmer \cite{delPinoFelmer1996}*{(2.4) and Lemma 2.2} when \(\inf_{\R^N} V > 0 \), and has been extended to decaying potentials \citelist{\cite{BonheureVanSchaftingen2008}*{Lemma 12 and proof of Proposition 21}\cite{MorozVanSchaftingen2010}*{Lemma 2.2}}.

\begin{proof}[Sketch of the proof]
First one shows that for every \(x \in \Lambda\),
\[
  \limsup_{\varepsilon \to 0} \varepsilon^{-N} \inf_{\mathcal{N}_\varepsilon} \mathcal{J}_\varepsilon 
\le \mathcal{C}(x)\:
\]
by taking suitable multiples of cutoffs of \(U_{V(x)}(\frac{\cdot-x}{\varepsilon})\).

By Proposition~\ref{prop:Minimizer}, for every \(\varepsilon > 0\), there exists \(u_\varepsilon \in \mathcal{N}_\varepsilon\) such that \(\mathcal{J}_\varepsilon(u_\varepsilon) = \inf_{\mathcal{N}_\varepsilon} \mathcal{J}_\varepsilon\). By classical regularity theory, \(u_\varepsilon\) is continuous. Choose \(x_\varepsilon \in \Bar{\Lambda}\) such that \(u_\varepsilon(x_\varepsilon) = \sup_\Lambda u_\varepsilon\). By Lemma~\ref{lem:1}, \(\liminf_{\varepsilon \to 0} u_\varepsilon(x_\varepsilon) > 0\). By Proposition~\ref{estim:inf2},
\[
 \liminf_{\varepsilon \to 0} \varepsilon^{-N} \mathcal{J}_\varepsilon(u_\varepsilon) 
\ge \inf_\Lambda \mathcal{C}\:.\qedhere
\]
\end{proof}

\subsection{Decay of almost minimizers}

The next ingredient is a decay estimate that will allow to control the functional outside \(\Lambda\) in the proof of the strict inequality \eqref{estim}. 
\begin{lemma}\label{lem:3}
Let \((\varepsilon_n)_{n \in \N} \subset \R^+_0\) be a sequence such that \(\varepsilon_n \to 0\) as \(n \to \infty\) and let \((u_n)_{n \in \N} \subset \mathcal{N}_{\varepsilon_n}\). If \(\inf_\Lambda V > 0\)  and
\begin{equation*}
 \limsup_{n\to\infty} \varepsilon_n^{-N} \mathcal{J}_{\varepsilon_n} (u_n) \leq  \inf_{\Lambda} \mathcal{C},
\end{equation*}
then, for every open set \(U \subset \R^N\) such that \(\Bar{\Lambda} \subset U\), 
\begin{equation*}
 \lim_{n\to\infty} \varepsilon_n^{-N} \int_{\R^N\setminus U} \bigl( \varepsilon_n^2 \abs{\nabla u_n}^2 + V \abs{u_n}^2 \bigr)= 0\:.
\end{equation*}
\end{lemma}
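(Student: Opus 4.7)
The plan is to replace $u_n$ by a nearby, approximately critical sequence via Ekeland's variational principle on $\mathcal{N}_{\varepsilon_n}$, apply the cutoff test-function technique of Lemma~\ref{lemmaAsymptoticsOutsideLambda} to that modified sequence, and then transfer the decay back to $u_n$.

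From Proposition~\ref{propAsymptotics} and Lemma~\ref{lem:bounded}, the hypothesis yields $\mathcal{J}_{\varepsilon_n}(u_n) - \inf_{\mathcal{N}_{\varepsilon_n}} \mathcal{J}_{\varepsilon_n} = o(\varepsilon_n^N)$ and $\|u_n\|_{\varepsilon_n}^2 = O(\varepsilon_n^N)$. Since $\mathcal{N}_{\varepsilon_n}$ is closed in $H^1_V$ (by Lemma~\ref{lemENehariEstimate2}, it is bounded away from zero), Ekeland's principle with the distance induced by $\|\cdot\|_{\varepsilon_n}$ produces $\tilde u_n \in \mathcal{N}_{\varepsilon_n}$ with $\mathcal{J}_{\varepsilon_n}(\tilde u_n) \le \mathcal{J}_{\varepsilon_n}(u_n)$, $\|\tilde u_n - u_n\|_{\varepsilon_n} = o(\varepsilon_n^{N/2})$, and constrained derivative of $\|\cdot\|_{\varepsilon_n}$-dual size $o(\varepsilon_n^{N/2})$. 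Writing the Lagrange multiplier relation with respect to the Nehari constraint $\Phi(u) := \dualprod{\mathcal{J}'_{\varepsilon_n}(u)}{u}$, pairing with $\tilde u_n$, and using property $(g_4)$ together with Lemmas~\ref{lemENehariEstimate1} and~\ref{lemENehariEstimate2} to obtain
\[
\dualprod{\Phi'(\tilde u_n)}{\tilde u_n} \le -(p-1)\int_\Lambda (\tilde u_n)_+^{p+1} \le -c\,\varepsilon_n^N,
\]
forces the multiplier to be $o(1)$ and hence $\|\mathcal{J}'_{\varepsilon_n}(\tilde u_n)\|_{*,\varepsilon_n} = o(\varepsilon_n^{N/2})$.

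It suffices, by monotonicity of $\int_{\R^N\setminus U}$ in $U$, to treat $U$ small enough that $\inf_U V > 0$. Fix $\bar\Lambda \subset V_0$ with $\overline{V_0} \subset U$ and a cutoff $\psi \in C^\infty(\R^N)$ with $\psi \equiv 0$ on $V_0$, $\psi \equiv 1$ on $\R^N \setminus U$, $0\le \psi \le 1$. Testing $\mathcal{J}'_{\varepsilon_n}(\tilde u_n)$ against $\psi^2 \tilde u_n$ and using $(g_2)$ on the support of $\psi$ together with Lemma~\ref{lemmaPositivity}, I get, exactly as in the proof of Lemma~\ref{lemmaAsymptoticsOutsideLambda},
\[
(1-\mu)\int\bigl(\varepsilon_n^2 |\nabla(\psi\tilde u_n)|^2 + V(\psi\tilde u_n)^2\bigr) \le \dualprod{\mathcal{J}'_{\varepsilon_n}(\tilde u_n)}{\psi^2 \tilde u_n} + \varepsilon_n^2 \int |\nabla\psi|^2 \tilde u_n^2.
\]
The first term is $o(\varepsilon_n^{N/2})\cdot O(\varepsilon_n^{N/2}) = o(\varepsilon_n^N)$; the second is $O(\varepsilon_n^{N+2}) = o(\varepsilon_n^N)$, since $\operatorname{supp}\nabla\psi \subset U\setminus V_0$ where $V$ is bounded below. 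This gives $\int_{\R^N\setminus U}(\varepsilon_n^2 |\nabla \tilde u_n|^2 + V \tilde u_n^2) = o(\varepsilon_n^N)$, and the triangle inequality with $\|\tilde u_n - u_n\|_{\varepsilon_n}^2 = o(\varepsilon_n^N)$ transfers this to $u_n$.

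The main obstacle is the Ekeland--Lagrange-multiplier step: one must track the $\varepsilon_n$-scaling of all norms and combine $(g_4)$ with Lemmas~\ref{lemENehariEstimate1}--\ref{lemENehariEstimate2} to ensure that the Nehari constraint derivative $\Phi'(\tilde u_n)$ is nondegenerate of the right order $\varepsilon_n^N$, which is precisely what makes the Lagrange multiplier tend to zero and delivers an approximate critical point of the correct order. Once this is in hand, the cutoff estimate is a routine adaptation of Lemma~\ref{lemmaAsymptoticsOutsideLambda}.
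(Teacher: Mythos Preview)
Your argument is correct in spirit but follows a genuinely different route from the paper. The paper does \emph{not} pass to an approximate critical point via Ekeland; instead it introduces an auxiliary energy-splitting inequality (Lemma~\ref{lem:lowenergycoercivity}): writing \(\varphi^2+\psi^2=1\) with \(\psi=0\) on \(\Lambda\) and \(\varphi=0\) outside \(U\), one has
\[
\mathcal{J}_\varepsilon(\varphi u)+(1-\mu)\int\bigl(\varepsilon^2\abs{\nabla(\psi u)}^2+V\abs{\psi u}^2\bigr)\le \mathcal{J}_\varepsilon(u)+C\varepsilon^2\int_{U\setminus\bar\Lambda}V\abs{u}^2.
\]
One then applies this to \(t_n u_n\) with \(t_n\) chosen so that \(t_n\varphi u_n\in\mathcal{N}_{\varepsilon_n}\), bounds \(\mathcal{J}_{\varepsilon_n}(t_n u_n)\le\mathcal{J}_{\varepsilon_n}(u_n)\) from above by the hypothesis and \(\mathcal{J}_{\varepsilon_n}(t_n\varphi u_n)\ge\inf_{\mathcal{N}_{\varepsilon_n}}\mathcal{J}_{\varepsilon_n}\) from below by Proposition~\ref{propAsymptotics}, and reads off the decay of \(\psi u_n\). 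This is shorter and avoids both Ekeland and Lagrange multipliers entirely; in particular it never needs the Nehari constraint functional \(\Phi\) to be \(C^1\). Your approach is more modular---it reduces to an almost-critical point and then literally reuses the test-function computation of Lemma~\ref{lemmaAsymptoticsOutsideLambda}---and would transfer more readily to settings where an energy-splitting identity like Lemma~\ref{lem:lowenergycoercivity} is unavailable.

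One technical caveat you should address: the penalized nonlinearity \(g_\varepsilon(x,s)\) is only Lipschitz in \(s\) outside \(\Lambda\) (the \(\min\) creates a corner), so \(\Phi(u)=\dualprod{\mathcal{J}_\varepsilon'(u)}{u}\) is not obviously \(C^1\), and the Lagrange-multiplier step on \(\mathcal{N}_{\varepsilon_n}\) needs a word of justification (e.g.\ via the homeomorphism \(u\mapsto t(u)u\) from the unit sphere onto \(\mathcal{N}_{\varepsilon_n}\), or by noting that the nondifferentiability set has measure zero for each fixed \(u\)). The paper's argument sidesteps this issue completely.
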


This lemma is proved by M.\thinspace del Pino and P.\thinspace Felmer \cite{delPinoFelmer1997}*{(1.19)} when \(\inf_{\R^N} V > 0\). 
The proof of Lemma~\ref{lem:3} relies on the following lemma
\begin{lemma}\label{lem:lowenergycoercivity}
Let \(U \subset \R^N\) be such that \(\Bar{\Lambda} \subset U\) and \(\inf_U V > 0\).
Let \(\psi \in C^1 (\R^N)\) and \(\varphi \in C^1 (\R^N)\) be such that \(\psi = 0\) on \(\Lambda\), \(\varphi = 0\) on \(\R^N \setminus U\), and \(\psi^2 + \varphi^2 = 1 \)  on \(\R^N\).
There exists \(C > 0\) such that for every \(\varepsilon > 0\) and \(u \in H^1_V (\R^N)\),
\[
  \mathcal{J}_{\varepsilon} ( \varphi u) + 
(1-\mu) \int_{\R^N} \bigl(\varepsilon^2 \abs{\nabla (\psi u)}^2 +  V \abs{\psi u}^2\bigr)
\le \mathcal{J}_{\varepsilon} (u) + C \varepsilon^2 \int_{U \setminus \Bar{\Lambda} } V \abs{u}^2\:.
\]
\end{lemma}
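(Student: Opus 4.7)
The approach is an IMS-type localization of $\mathcal{J}_\varepsilon$. The partition-of-unity identity $\psi^2 + \varphi^2 \equiv 1$ yields, upon differentiation, $\psi\nabla\psi + \varphi\nabla\varphi \equiv 0$, so that the product rule gives the pointwise identity
\[
  |\nabla(\psi u)|^2 + |\nabla(\varphi u)|^2 = |\nabla u|^2 + u^2\bigl(|\nabla\psi|^2 + |\nabla\varphi|^2\bigr),
\]
together with $(\psi u)^2 + (\varphi u)^2 = u^2$. Integrating these identities against $\varepsilon^2$ and $V$ expresses the Dirichlet part of $\mathcal{J}_\varepsilon(u)$ as
$\frac{1}{2}\int[\varepsilon^2|\nabla(\varphi u)|^2 + V(\varphi u)^2] + \frac{1}{2}\int[\varepsilon^2|\nabla(\psi u)|^2 + V(\psi u)^2] - \frac{\varepsilon^2}{2}\int u^2(|\nabla\psi|^2 + |\nabla\varphi|^2)$; the last correction is supported in $\bar U \setminus \Lambda$ because $\nabla\psi$ vanishes on $\Lambda$ and $\nabla\varphi$ vanishes on $\R^N\setminus U$.

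For the nonlinear term, on $\Lambda$ we have $\varphi u = u$ and hence $G_\varepsilon(x, u) - G_\varepsilon(x, \varphi u) \equiv 0$. On $\R^N \setminus \Lambda$, the plan is to combine property $(g_2)$, namely $g_\varepsilon(x, s) \le \mu(V + \varepsilon^2 H)s_+$, with the fundamental theorem of calculus applied between $\varphi u$ and $u$ (the case $u < 0$ is trivial since $g_\varepsilon$ vanishes on $(-\infty, 0]$):
\[
  G_\varepsilon(x, u) - G_\varepsilon(x, \varphi u) = \int_{\varphi u}^{u} g_\varepsilon(x, s)\, ds \le \frac{\mu(V + \varepsilon^2 H)}{2}\bigl(u^2 - (\varphi u)^2\bigr) = \frac{\mu(V + \varepsilon^2 H)}{2}(\psi u)^2.
\]
Since $\psi u$ vanishes on $\Lambda$, Lemma~\ref{lemmaPositivity} applies to $\psi u$ and absorbs the $H$-contribution via $\int \varepsilon^2 H (\psi u)^2 \le \int \varepsilon^2 |\nabla(\psi u)|^2$, so the full $G_\varepsilon$-defect is controlled by a $\mu$-fraction of $\int[\varepsilon^2 |\nabla(\psi u)|^2 + V (\psi u)^2]$.

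Assembling the pieces, one finds that $\mathcal{J}_\varepsilon(u) - \mathcal{J}_\varepsilon(\varphi u)$ is bounded below by a positive multiple of $\int[\varepsilon^2 |\nabla(\psi u)|^2 + V(\psi u)^2]$ (with constant proportional to $1-\mu$) minus the gradient-correction $\frac{\varepsilon^2}{2}\int u^2(|\nabla\psi|^2 + |\nabla\varphi|^2)$. This remaining error is controlled using that $|\nabla\psi|^2 + |\nabla\varphi|^2$ is uniformly bounded and supported in $\bar U \setminus \Lambda$, and that $\inf_U V > 0$: hence it is dominated by $C\varepsilon^2 \int_{U \setminus \bar\Lambda} V |u|^2$, yielding the claimed estimate after rearrangement.

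The main delicate point is the coefficient bookkeeping: the $\tfrac{1}{2}$ produced by integrating the linear majorant of $g_\varepsilon$ from $\varphi u$ to $u$ has to mesh cleanly with the $\tfrac{1}{2}$ already present in the quadratic part of $\mathcal{J}_\varepsilon$ and with the Hardy inequality on $\psi u$, so that the $V$-part and the gradient part of the $\psi u$-energy pick up the same coefficient $1-\mu$ on the left-hand side. Lemma~\ref{lemmaPositivity} is essential throughout: without it, the singular $H$-contribution coming from the penalization could not be absorbed into the kinetic part of the localized energy.
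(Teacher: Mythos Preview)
Your proof is correct and follows essentially the same route as the paper's: the IMS localization identity for the quadratic part, the bound on \(G_\varepsilon(x,u)-G_\varepsilon(x,\varphi u)\) via \((g_2)\), absorption of the \(H\)-term through Lemma~\ref{lemmaPositivity} applied to \(\psi u\), and control of the commutator error by \(\inf_U V>0\). The only cosmetic difference is that the paper leaves the final invocation of Lemma~\ref{lemmaPositivity} implicit (``The conclusion follows''), whereas you spell it out; note also that both arguments actually produce the constant \(\tfrac{1-\mu}{2}\) rather than \(1-\mu\), which is harmless since only positivity of this constant is used in the application (Lemma~\ref{lem:3}).
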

\begin{proof}
One has
\begin{multline*}
\mathcal{J}_{\varepsilon} (u) = \mathcal{J}_{\varepsilon} ( \varphi u) + \frac{1}{2}
\int_{\R^N} \bigl(\varepsilon^2 \abs{\nabla (\psi u)}^2  + V \abs{ \psi u}^2 \bigr)
-\frac{1}{2} \int_{U \setminus \Bar{\Lambda} } \varepsilon^2(\abs{\nabla \psi}^2 +\abs{\nabla \varphi}^2 ) \abs{u}^2\\
- \int_{\R^N} G_\varepsilon(x, u(x))-G_\varepsilon(x,\varphi(x) u(x))\:dx\:.
\end{multline*}
One has for every \(x \in \R^N \setminus \Lambda\), by \( (g_2) \),
 \[
 \begin{split}
   G_\varepsilon(x, u(x))-G_\varepsilon(x,\varphi(x) u(x))&= \int_{\varphi(x) u(x)}^{u(x)} g_\varepsilon(x,\sigma)\:d\sigma\\
 &\leq \mu \int_{\varphi(x) u(x)}^{u(x)} \bigl(V(x) + \varepsilon^2 H(x) \bigr) \sigma \:d\sigma\\[1.9mm]
 &= \frac{\mu}{2}\bigl(V(x) + \varepsilon^2 H(x) \bigr) \abs{\psi(x)u(x)}^2,
 \end{split}
 \]
 so that
\[
 \int_{\R^N} G_\varepsilon(x, u(x))-G_\varepsilon(x,\varphi(x) u(x))\:dx \leq \frac{\mu}{2}\int_{\R^N}\bigl(V + \varepsilon^2 H \bigr) \abs{\psi u}^2 \:.
\]
On the other hand,
\[
  \int_{U \setminus \Bar{\Lambda} } \varepsilon^2 (\abs{\nabla \psi}^2 +\abs{\nabla \varphi}^2 ) \abs{u}^2
\leq C\varepsilon^2 \int_{U \setminus \Bar{\Lambda} } V \abs{u}^2\:.
\]
The conclusion follows.
\end{proof}

\begin{proof}[Proof of Lemma~\ref{lem:3}]
Without loss of generality, assume that \(\inf_U V > 0\).
Let \(\psi \in C^1 (\R^N)\) and \(\varphi \in C^1 (\R^N)\) be such that \(\psi = 0\) on \(\Lambda\), \(\varphi = 0\) on \(\R^N \setminus U\), and \(\psi^2 + \varphi^2 = 1 \)  on \(\R^N\).
Define \(t_n\) so that \(t_n \varphi u_n \in \mathcal{N}_{\varepsilon_n}\).
By Lemma~\ref{lem:bounded},
\[
  \limsup_{n \to \infty} \varepsilon_n^{-N}\int_{\R^N} \varepsilon^2 \abs{\nabla u_n}^2 +V \abs{u_n}^2 < \infty\:,
\]
and thus
\[
  \limsup_{n \to \infty}  \varepsilon_n^{-N} \int_{\R^N} \varepsilon_n^2 \abs{\nabla (\varphi u_n)}^2 +V \abs{\varphi u_n}^2 < \infty\:.
\]
By the choice of \(t_n\),
\begin{align*}
 t_n^2\int_{\R^N} \varepsilon_n^2 \abs{\nabla (\varphi u_n)}^2 +V \abs{\varphi u_n}^2 &= \int_{\R^N} g_{\varepsilon_n}(x, t_n \varphi(x) u_n(x)) t_n \varphi(x) u_n(x)\: dx  \\
 &\geq t_n^{p+1} \int_{\Lambda}  \abs{u_n}^{p+1}\:. 
\end{align*}
We infer from Lemmas~\ref{lemENehariEstimate1}~and~\ref{lemENehariEstimate2} that
\[
  \liminf_{n \to \infty} \varepsilon_n^{-N} \int_{\Lambda}  \abs{u_n}^{p+1} > 0\:.
\]
Therefore, 
\(\limsup_{n \to \infty} t_n < \infty\)
and
\[
 \limsup_{n \to \infty}  \varepsilon_n^{-N} \int_{U \setminus \Bar{\Lambda} } V \abs{t_n u_n}^2 < \infty\:.
\]
By Lemma~\ref{lem:lowenergycoercivity},
\begin{multline*}
\limsup_{n \to \infty} \varepsilon_n^{-N}\mathcal{J}_{\varepsilon_n} (t_n u_n) \geq \liminf_{n \to \infty}  \varepsilon_n^{-N}\mathcal{J}_{\varepsilon_n} (t_n \varphi u_n) \\
+ \limsup_{n \to \infty} \varepsilon_n^{-N} (1-\mu) t_n^2 \int_{\R^N} \bigl(\varepsilon_n^2 \abs{\nabla (\psi u_n)}^2 +  V \abs{\psi u_n}^2\bigr)\:.
\end{multline*}
By assumption, we have
\[
\limsup_{n \to \infty} \varepsilon_n^{-N}\mathcal{J}_{\varepsilon_n} (t_n u_n) \le \limsup_{n \to \infty} \varepsilon_n^{-N}\mathcal{J}_{\varepsilon_n} (u_n) \le \inf_{\Lambda} \mathcal{C}\: ,
\]
and since \(t_n \varphi u_n \in \mathcal{N}_{\varepsilon_n}\), we deduce from Proposition~\ref{propAsymptotics} that 
\[
 \liminf_{n \to \infty} \varepsilon_n^{-N}\mathcal{J}_{\varepsilon_n} (t_n \varphi u_n) \ge \inf_{\Lambda} \mathcal{C}\: .
\]
Combining the last three inequalities, we obtain the conclusion.
\end{proof}

\subsection{Asymptotics of the barycenters}

As in \cite{delPinoFelmer1997}, we introduce a barycenter map in order to localize functions.
Let \(\psi \in C^1(\R^N)\) be such that \(\supp \psi\) is compact, \(\supp \psi \subset \{x \in \R^N \st V(x) > 0\}\) and \(\psi=1\) on
a neighborhood of \(\Lambda\). 
The barycenter of a function \(u \in L^2(\R^N)\) is defined by
\begin{equation*}
 \beta(u) := \frac{\displaystyle \int_{\R^N} x \abs{\psi(x) u(x)}^2 \: dx}{\displaystyle \int_{\R^N}  \abs{\psi(x)u(x)}^2\: dx}\:.
\end{equation*}
The map \(\beta\) is well-defined on the set \(\{ u \in H^1_V(\R^N) \st \psi u \ne 0\}\), which contains \(\mathcal{N}_\varepsilon\) for each \(\varepsilon > 0\) by Lemma~\ref{lemENehariEstimate2}. 

\begin{proposition}
\label{propositionAsymptoticsMoments}
Let \((\varepsilon_n)_{n\in \N} \subset \R^+_0\) be a sequence such that \(\varepsilon_n \to 0\) as \(n \to \infty\) and let \((u_n)_{n\in \N} \subset \mathcal{N}_{\varepsilon_n}\). If \(\inf_{\Lambda} V>0\) and
\begin{equation*}
 \limsup_{n \to \infty} \varepsilon_n^{-N}\mathcal{J}_{\varepsilon_n} (u_n) \leq  \inf_{\Lambda} \mathcal{C}\: ,
\end{equation*}
then
\begin{equation*}
 \lim_{n\to\infty} V\bigl(\beta(u_n)\bigr) = \inf_{\Lambda} V\:.
\end{equation*}
\end{proposition}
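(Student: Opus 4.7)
The plan is to extract a concentration point of \(u_n\) in \(\Bar{\Lambda}\), to show via Proposition~\ref{estim:inf2} that it must realize \(\inf_\Lambda V\) and is the only bubble, and finally to upgrade this to \(L^2\)-concentration of \(|\psi u_n|^2\) in order to control the barycenter.

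\emph{Step 1 (concentration point and its location).} By regularity each \(u_n\) is continuous, so I pick \(x_n \in \Bar{\Lambda}\) realizing \(u_n(x_n) = \sup_{\Bar{\Lambda}} u_n\). Lemma~\ref{lem:1} gives \(u_n(x_n)^{p-1} \ge V(x_n) \ge \inf_\Lambda V > 0\), and along a subsequence \(x_n \to \Bar{x} \in \Bar{\Lambda}\) with \(\liminf_n u_n(x_n) > 0\). Proposition~\ref{estim:inf2} applied with this single point yields \(\liminf_n \varepsilon_n^{-N}\mathcal{J}_{\varepsilon_n}(u_n) \ge \mathcal{C}(\Bar{x})\), and together with the hypothesis this forces \(\mathcal{C}(\Bar{x}) = \inf_\Lambda \mathcal{C}\). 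Since by~\eqref{estim:by} \(\mathcal{C}\) is a strictly increasing function of \(V\) on \(\Bar{\Lambda}\), I conclude \(V(\Bar{x}) = \inf_\Lambda V\).

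\emph{Step 2 (no second bubble).} Suppose the \(L^\infty\)-smallness hypothesis of the ``furthermore'' clause of Proposition~\ref{estim:inf2} failed with the single point \(x_n\). A diagonal extraction then produces \(y_n\) with \(|y_n - x_n|/\varepsilon_n \to \infty\), \(y_n \to \Bar{y}\) remaining near \(\Bar{\Lambda}\) by Lemma~\ref{lemsmallballs}, and \(\liminf u_n(y_n) > 0\). Proposition~\ref{estim:inf2} with \(M = 2\) would then give
\[
\liminf_n \varepsilon_n^{-N}\mathcal{J}_{\varepsilon_n}(u_n) \ge \mathcal{C}(\Bar{x}) + \mathcal{C}(\Bar{y}) \ge 2 \inf_\Lambda \mathcal{C},
\]
contradicting the upper bound on the energy. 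Thus the ``furthermore'' clause applies with a single bubble at \(x_n\), and \(\varepsilon_n^{-N}\mathcal{J}_{\varepsilon_n}(u_n) \to \mathcal{C}(\Bar{x})\).

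\emph{Step 3 (\(L^2\)-concentration and conclusion).} Since \(x_n \to \Bar{x}\), it suffices to show \(\int |x - x_n| |\psi u_n|^2 \, dx = o(\int |\psi u_n|^2 \, dx)\). The denominator is at least \(c \varepsilon_n^N\) for some \(c > 0\), by \(C^1_{\mathrm{loc}}\)-convergence of \(v_n(y) := u_n(x_n + \varepsilon_n y)\) to a nontrivial limit. Splitting the numerator over \(B(x_n, \varepsilon_n R)\), \(U \setminus B(x_n, \varepsilon_n R)\), and \(\R^N \setminus U\) for a small neighbourhood \(U\) of \(\Bar{\Lambda}\), the first piece is at most \(\varepsilon_n R \int |\psi u_n|^2\); the third is controlled by Lemma~\ref{lem:3} since \(V\) has a positive lower bound on the compact set \(\supp \psi\); and on the middle piece the \(L^\infty\)-smallness from Step~2 combined with a subsolution argument for the rescaled equation yields uniform exponential decay of \(v_n\), so that \(\int_{U \setminus B(x_n, \varepsilon_n R)} u_n^2 = o(\varepsilon_n^N)\) as \(R, n \to \infty\). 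Combining the three contributions gives \(\beta(u_n) - x_n \to 0\), hence by continuity of \(V\), \(V(\beta(u_n)) \to V(\Bar{x}) = \inf_\Lambda V\).

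\emph{Main obstacle.} The crux is the \(L^2\)-tightness on the middle piece. The \(L^\infty\)-smallness of Step~2 does not by itself yield \(o(\varepsilon_n^N)\) in \(L^2\), and Hölder interpolation from the available \(L^{p+1}\)-control runs in the wrong direction for \(p > 1\). Promoting the smallness to an \(L^2\)-decay estimate therefore requires either a pointwise comparison with explicit exponential subsolutions for the rescaled equation on a region where \(V\) is bounded below, or a concentration-compactness argument in the rescaled variable that rules out escape of mass to infinity (equivalently, a second bubble, which has already been excluded).
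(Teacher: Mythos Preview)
There is a fundamental gap: the hypothesis only assumes \(u_n \in \mathcal{N}_{\varepsilon_n}\), i.e.\ elements of the Nehari manifold, \emph{not} solutions of \((\mathcal{Q}_{\varepsilon_n})\). Every tool you invoke in Steps~1--3 requires the equation. Proposition~\ref{estim:inf2} and Lemma~\ref{lemsmallballs} are stated for solutions; the ``regularity'' you use in Step~1 to make \(u_n\) continuous and select a maximum point has no basis without an equation; and the ``subsolution argument for the rescaled equation'' in Step~3 likewise presupposes that \(u_n\) solves something. This is not a technicality one can retrofit: in the application (Lemma~\ref{lemmaEstimatecepsilon}) the functions \(\gamma_n(x_n)\) are arbitrary points on a path in \(\mathcal{N}_{\varepsilon_n}\), so the proposition really must be proved at the level of Nehari elements.

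The paper's proof avoids all of this by working purely variationally. One multiplies \(\psi u_n\) by a scalar \(t_n\) so that the rescaled function
\(v_n(y) = t_n \psi(\beta(u_n)+\varepsilon_n y)\, u_n(\beta(u_n)+\varepsilon_n y)\)
lies on the Nehari manifold \(\mathcal{M}_{V_0}\) of the \emph{limiting} functional \(\mathcal{I}_{V_0}\), with \(V_0 = \inf_\Lambda V\). Using Lemma~\ref{lem:3} (which is stated for Nehari elements) together with the energy hypothesis, one shows \(\limsup_n \mathcal{I}_{V_0}(v_n) \le b_{V_0}\), so \((v_n)\) is a minimizing sequence. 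Lemma~\ref{lemmaConvergenceMinimizingLimiting} (concentration-compactness for minimizing sequences, no equation needed) then gives \(H^1\)-convergence of \(v_n(\cdot - y_n)\) to \(U_{V_0}\); this delivers both the \(L^2\)-tightness you were missing and the localisation of mass in \(\Lambda\), from which \(\beta(u_n)-x_n \to 0\) and \(V(\beta(u_n)) \to V_0\) follow.
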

\begin{proof}
Let \(t_n > 0\) be such that
\begin{equation*}
 \int_{\R^N} \varepsilon_n^2 \abs{\nabla t_n \psi u_n}^2 + V_0 \abs{ t_n \psi u_n}^2
=\int_{\R^N} ( t_n \psi u_n)_+^{p+1}\:.
\end{equation*}
Define \(v_n : \R^N \to \R\) for \(y \in \R^N\) by
\[
  v_n(y) := t_n \psi(\beta(u_n)+ \varepsilon_n y)u_n(\beta(u_n)+ \varepsilon_n y)\:.
\]

\medbreak
\paragraph{Claim 1} 
\emph{The sequence \((t_n)_{n \in \N}\) is bounded.}

By Lemmas~\ref{lemENehariEstimate1} and \ref{lemENehariEstimate2},
\[
 \liminf_{n \to \infty} \epsilon_n^{- N} \int_{\R^N} (\psi u_n)_+^{p+1}
 \ge \liminf_{n \to \infty} \epsilon_n^{- N} \int_{\Lambda} (u_n)_+^{p+1}
 > 0.
\]
Since 
\[
 \limsup_{n \to \infty} \int_{\R^N} \varepsilon_n^2 \abs{\nabla t_n \psi u_n}^2 + V_0 \abs{ t_n \psi u_n}^2
\]
by Lemma~\ref{lem:bounded}, the sequence \((t_n)_{n \in \N}\) is bounded.

\medbreak
\paragraph{Claim 2} \emph{One has }
\begin{equation*}
 \limsup_{n \to \infty} \varepsilon_n^{-N} \mathcal{J}_{\varepsilon_n}(t_n \psi u_n) \leq  \inf_{\Lambda} \mathcal{C}\:.
\end{equation*}
We can write
\begin{equation*}
\begin{split}
 \mathcal{J}_{\varepsilon_n}(t_n \psi u_n)
&= \mathcal{J}_{\varepsilon_n}(t_n u_n) + \frac{\varepsilon_n^2}{2} t_n^2\int_{\R^N} \abs{\nabla (\psi u_n)}^2-\abs{\nabla u_n}^2\\
&\qquad + \frac{t_n^2}{2}  \int_{\R^N} (\psi^2-1)V \abs{u_n}^2\\
&\qquad + \int_{\R^N} G_{\varepsilon_n}(x, t_n u_n(x)) - G_{\varepsilon_n}(x, t_n \psi(x) u_n(x)) \:dx
\end{split}
\end{equation*}
Now, in view of Lemma \ref{lem:3}, since \(\psi=1\) in a neighborhood of \(\Lambda\) and \((t_n)_{n \in \N}\) is bounded,
\begin{multline*}
\frac{\varepsilon_n^2}{2}t_n^2 \int_{\R^N} \abs{\nabla (\psi u_n)}^2-\abs{\nabla u_n}^2\\
=\frac{\varepsilon_n^2}{2} t_n^2\int_{\R^N} (\psi^2-1) \abs{\nabla u_n}^2
+ 2 \psi u_n \nabla \psi \cdot \nabla u_n + \abs{\nabla \psi }^2\abs{u_n}^2
=o(\varepsilon_n^N)\: ,
\end{multline*}
as \(n \to \infty\).
Lemma \ref{lem:3} also implies that
\[
\begin{split}
 \int_{\R^N} (\psi^2-1)V \abs{u_n}^2
=o(\varepsilon_n^N)\:,
\end{split}
\]
as \(n \to \infty\).
Finally, since \(\psi = 1\) on a neighborhood \(U\) of \(\Lambda\), we deduce from \((g_2)\) that
\begin{multline*}
 \int_{\R^N} G_{\varepsilon_n}\bigl(x, t_n u_n(x)\bigr) - G_{\varepsilon_n}(x, t_n \psi(x) u_n(x))\:dx\\
\le \frac{\mu}{2} t_n^2\int_{\R^N \setminus U} \bigl(\varepsilon_n^2 H + V\bigr) \abs{u_n}^2 
=o(\varepsilon_n^N)\:,
\end{multline*}
as \(n \to \infty\).
where we have used Lemma \ref{lem:3} again.
It follows from the hypothesis that
\begin{equation*} 
  \limsup_{n \to \infty} \varepsilon_n^{-N} \mathcal{J}_{\varepsilon_n}(t_n u_n) \le \limsup_{n \to \infty} \varepsilon_n^{-N} \mathcal{J}_{\varepsilon_n}(u_n) \le \inf_{\Lambda} \mathcal{C}\:;
\end{equation*}
the claim follows.

\medbreak
\paragraph{Claim 3} \emph{There holds}
\begin{equation*}
 \limsup_{n \to \infty} \mathcal{I}_{V_0}(v_n) \leq \inf_{\Lambda} \mathcal{C}\:,
\end{equation*}
\emph{where \(V_0=\inf_\Lambda V\).}

One computes that, using \((g_2)\),
\begin{equation*}
\begin{split}
 \varepsilon_n^{N} \mathcal{I}_{V_0}(v_n)
&= \mathcal{J}_{\varepsilon_n}(t_n \psi u_n) + \frac{t_n^2}{2}  \int_{\R^N} (V_0 - V)\psi^2 \abs{u_n}^2\\
&\qquad + \int_{\R^N} G_{\varepsilon_n}(x, t_n \psi(x) u_n(x)) - \frac{t_n^{p+1}}{p+1} (\psi(x) u_n(x))_+^{p+1} \:dx\\
&\leq \mathcal{J}_{\varepsilon_n}(t_n \psi u_n) + \frac{t_n^2}{2}  \int_{\R^N} (V_0 - V)\psi^2 \abs{u_n}^2.
\end{split}
\end{equation*}
For \(\kappa \in (0, 1)\), define 
\[
  U_\kappa = \bigl\{ x \in \R^N \st \bigl(V_0-V(x)\bigr)\psi^2(x) < \kappa V(x) \bigr\}\:.
\]
Since \(V\) is continuous, \(U_\kappa\) is open and \(\Bar{\Lambda} \subset  U_\kappa\).
By Lemma \ref{lem:3}, 
\[
\begin{split}
 \int_{\R^N} (V_0 -V)\psi^2 \abs{u_n}^2
&\le \kappa\int_{U_\kappa}  V\abs{u_n}^2
+  \int_{\R^N \setminus U_\kappa} (V_0 -V)\psi^2 \abs{u_n}^2\\
&\le  \kappa \int_{\R^N} V \abs{u_n}^2 + o(\varepsilon_n^N)\:,
\end{split}
\]
as \(n \to \infty\).
Since \(\kappa > 0 \) is arbitrary, and \((t_n)_{n \in \N}\) and \( (\norm{u_n}_\varepsilon)_{n \in \N} \) are bounded,
\[
\frac{t_n^2}{2} \int_{\R^N} (V_0-V) \psi^2 \abs{u_n}^2 \le o(\varepsilon_n^N)\:,
\]
as \(n \to \infty\).
The claim now follows from Claim 2.

\medbreak
\paragraph{Conclusion} 
We know from Claim 3 that \((v_n)_{n \in \N}\) is a minimizing sequence of \(\mathcal{I}_{V_0}\) on its associated Nehari manifold 
\(
 \mathcal{M}_{V_0}.
\)
By Lemma~\ref{lemmaConvergenceMinimizingLimiting}, there exists a sequence of points \((y_n)_{n \in \N} \subset \R^N\) such that \(v_n(\cdot-y_n)\) converges in \(H^1(\R^N)\) to the positive solution
\(U_{V_0}\) of problem \eqref{problemLimit}.
Let \(x_n := \varepsilon_n y_n\). 
Since
\[
\begin{split}
 \beta(u_n)&=x_n + \frac{\displaystyle \int_{\R^N} (x-x_n) \abs{\psi(x) u_n(x)}^2 \: dx}{\displaystyle \int_{\R^N}  \abs{\psi(x) u_n(x)}^2\: dx}\\
&=x_n + \frac{\displaystyle \varepsilon_n \int_{\R^N} y \abs{v_n(y-y_n)}^2 \: dy}{\displaystyle \int_{\R^N}  \abs{v_n(y-y_n)}^2 \: dy}\:,
\end{split}
\]
we have \(\lim_{n \to \infty} \beta(u_n)-x_n = 0\).

Now, note that for \(n\) large enough, by Lemma~\ref{lemENehariEstimate2}, 
\[ 
 \liminf_{n\to \infty} \varepsilon_n^{-N}\int_{\Lambda} \abs{u_n}^2 >0.
\]
Since \(v_n(\cdot-y_n) \to U_{V_0}\) in \(L^2(\R^N)\), we must have \(\dist(x_n, \Lambda) = O(\varepsilon_n)\) as \(n \to \infty\).
Let
\[
  \Bar{V} := \limsup_{n \to \infty} V(x_n)= \limsup_{n \to \infty} V(\beta(u_n)).
\]
Since \(V\) is continuous on the compact set \(\supp \psi\), one has \(\lim_{n \to \infty} V(x_n+\varepsilon y) \geq V_0 \).

By Claim 2,
\[
\begin{split}
 b_{V_0} &\geq \varepsilon_n^{-N} \mathcal{J}_{\varepsilon_n}(t_n \psi u_n) + o(1)\\
&\geq \frac{1}{2}\int_{\R^N} \abs{\nabla v_n}^2 + \frac{1}{2}\int_{\R^N} V(x_n+\varepsilon y) \abs{v_n}^2
- \frac{1}{p+1}\int_{\R^N} (v_n)^{p+1}_+ + o(1),
\end{split}
\]
as \(n \to \infty\),
and thus we obtain
\begin{multline*}
\frac{1}{2}\int_{\R^N} \abs{\nabla U_{V_0}}^2 + V_0 \abs{U_{V_0}}^2
- \frac{1}{p+1}\int_{\R^N} (U_{V_0})^{p+1}_+ = b_{V_0} \\
\geq \frac{1}{2}\int_{\R^N} \bigl(\abs{\nabla U_{V_0}}^2 + \Bar{V} \abs{U_{V_0}}^2\bigr)
- \frac{1}{p+1}\int_{\R^N} (U_{V_0})^{p+1}_+.
\end{multline*}
This implies that \(\Bar{V} \leq V_0\). The conclusion follows.
\end{proof}

\section{The minimax level}
\label{sectionMinimax}

\subsection{Definition of the minimax level}

Following M.\thinspace del Pino and P.\thinspace Felmer \cites{delPinoFelmer1997,delPinoFelmer2002}, we define a minimax
value for \(\mathcal{J}_{\varepsilon}\). 

Let \(\eta \in C^{\infty}_{\mathrm{c}}(\R^+)\) be a cut-off function such that \(0 \leq \eta \leq 1\), \(\eta=1\) on a neighborhood of \(\Lambda\) and \(\supp \eta \subset \{x \in \R^N : V(x) > 0\}\).
We define \(w_{\varepsilon,y} \in C^\infty_{\mathrm{c}}(\R^N)\) by 
\begin{equation}\label{wepsy}
 w_{\varepsilon,y}(x) := t_{\varepsilon,y} \eta(x)
U_{V(y)}\Bigl(\frac{y-x}{\varepsilon}\Bigr),
\end{equation}
where \(t_{\varepsilon,y} > 0\) is such that \(w_{\varepsilon,y} \in \mathcal{N}_{\varepsilon}\).
Let \(\Lambda_\varepsilon \subset \Lambda\) be such that
\begin{equation}
\label{limsupLambdaepsilon}
  \lim_{\varepsilon \to 0} \sup_{x \in \partial \Lambda_\varepsilon} \dist(x,\partial\Lambda) = 0
\end{equation}
and
\begin{equation}
\label{liminfLambdaepsilon}
 \lim_{\varepsilon \to 0} \inf_{x \in \partial \Lambda_\varepsilon} \frac{\dist(x,\partial\Lambda)}{\varepsilon} = \infty\:.
\end{equation}

We define the family of paths
\begin{equation*}
 \Gamma_{\varepsilon} := \left\lbrace \gamma \in C(\overline{\Lambda}_\varepsilon, \mathcal{N}_{\varepsilon}) \st \text{for every \(y \in \partial\Lambda_\varepsilon\), } \gamma(y) = w_{\varepsilon,y} \right\rbrace
\end{equation*}
and the minimax value
\begin{equation}\label{ceps}
 c_{\varepsilon} := \inf_{\gamma \in \Gamma_{\varepsilon}} \sup_{y \in  \Lambda_\varepsilon} \mathcal{J}_{\varepsilon}( \gamma(y)
)\:.
\end{equation}

We want to apply the following theorem.
\begin{theorem}[General Minimax Principle \cite{Willem1996}*{Theorem 2.9}]
 Let \(X\) be a Banach space. Let \(M_0\) be a closed subspace of the metric space \(M\) and $\Gamma_0 \subset
C(M_0,X)$. Define
\begin{equation*}
 \Gamma := \left\lbrace \gamma \in C(M,X) \st \gamma_{\vert M_0} \in \Gamma_0 \right\rbrace.
\end{equation*}
If \(\varphi \in C^1(X,\R)\) satisfies
\begin{equation*}
 \infty > c := \inf_{\gamma \in \Gamma} \sup_{z \in M} \varphi\bigl(\gamma(z)\bigr) > a := \sup_{\gamma_0 \in \Gamma_0}
 \sup_{z \in M_0} \varphi\bigl(\gamma_0(z)\bigr)
\end{equation*}
and if \(\varphi\) satisfies the Palais-Smale condition at the level \(c\), then \(c\) is a critical value of \(\varphi\).
\end{theorem}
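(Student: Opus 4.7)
The plan is the classical deformation argument by contradiction. Suppose \(c\) is not a critical value of \(\varphi\). By the Palais--Smale condition at the level \(c\), a standard compactness argument produces \(\varepsilon_0 > 0\) and \(\delta > 0\) such that \(\norm{\varphi'(u)} \geq \delta\) whenever \(\abs{\varphi(u) - c} \leq 2 \varepsilon_0\); otherwise, we could extract a Palais--Smale sequence at level \(c\) with \(\varphi'(u_n) \to 0\) and its limit would provide a critical point at level \(c\). Fix \(\varepsilon \in \bigl(0, \min(\varepsilon_0, (c - a)/2)\bigr)\). My goal is to build a continuous deformation \(\eta : [0, 1] \times X \to X\) which is the identity on \([0, 1] \times \{u \st \abs{\varphi(u) - c} \geq 2\varepsilon\}\), never increases \(\varphi\), and satisfies \(\varphi(\eta(1, u)) \leq c - \varepsilon\) whenever \(\varphi(u) \leq c + \varepsilon\).

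Such a deformation is built in the usual way: choose a Lipschitz pseudo-gradient vector field \(\mathcal{V}\) on \(\{u \st \varphi'(u) \neq 0\}\) with \(\norm{\mathcal{V}(u)} \leq 1\) and \(\dualprod{\varphi'(u)}{\mathcal{V}(u)} \geq \tfrac{1}{2}\norm{\varphi'(u)}\); multiply by a Lipschitz cutoff \(\psi\) equal to \(1\) on \(\{\abs{\varphi - c} \leq \varepsilon\}\) and vanishing on \(\{\abs{\varphi - c} \geq 2\varepsilon\}\); and let \(\eta(t, u)\) be the time-\(tT\) flow of \(-\psi(u)\mathcal{V}(u)\), where \(T\) is of order \(\varepsilon/\delta\). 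The uniform bound \(\norm{\varphi'} \geq \delta\) forces \(\varphi \circ \eta(\cdot, u)\) to decrease at rate at least \(\delta/2\) as long as the trajectory stays in the slab \(\{\abs{\varphi - c} \leq \varepsilon\}\), yielding the required drop in time \(T\).

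To conclude, pick \(\gamma \in \Gamma\) with \(\sup_{z \in M} \varphi(\gamma(z)) \leq c + \varepsilon\) and set \(\tilde \gamma := \eta(1, \cdot) \circ \gamma\). For \(z \in M_0\) one has \(\varphi(\gamma(z)) \leq a < c - 2\varepsilon\) by the choice of \(\varepsilon\), hence \(\eta(1, \gamma(z)) = \gamma(z)\), so \(\tilde \gamma_{\vert M_0} = \gamma_{\vert M_0} \in \Gamma_0\) and \(\tilde \gamma \in \Gamma\). But then \(\sup_{z \in M} \varphi(\tilde \gamma(z)) \leq c - \varepsilon\), contradicting the definition of \(c\) as an infimum. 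The main obstacle is the construction of \(\eta\); the strict inequality \(c > a\) is essential precisely because it ensures that the cutoff \(\psi\) vanishes on a neighbourhood of \(\{\varphi \leq a\}\), so that the deformation leaves the boundary values of \(\gamma\) untouched and hence preserves membership in \(\Gamma\).
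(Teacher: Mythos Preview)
Your proof sketch is correct and is precisely the standard deformation argument. Note, however, that the paper does not actually prove this theorem: it is quoted as Theorem~2.9 from Willem's book \cite{Willem1996} and used as a black box, so there is no ``paper's own proof'' to compare against. Your argument is essentially the one given in that reference.
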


Since \(\mathcal{J}_{\varepsilon}\) satisfies the Palais-Smale condition (Lemma~\ref{lem:PalaisSmale}), we have
to show that for \(\varepsilon > 0\) small enough,
\begin{equation}\label{estim}
 c_{\varepsilon} > \sup_{y \in \partial\Lambda_\varepsilon} \mathcal{J}_{\varepsilon}( w_{\varepsilon,y} ) =: a_{\varepsilon}\:.
\end{equation}

\subsection{Estimates on the levels}\label{sectionEstimatesLevels}

\subsubsection{Estimate of \(a_{\varepsilon}\)}
We begin with an estimate of \(a_{\varepsilon}\). 
\begin{lemma}\label{lem:estimaeps}
We have
\begin{equation*}
 \lim_{\varepsilon \to 0}\varepsilon^{-N} a_{\varepsilon} = \sup_{\partial\Lambda} \mathcal{C}\:,
\end{equation*}
\end{lemma}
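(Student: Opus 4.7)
The plan is to rescale around each $y \in \partial\Lambda_\varepsilon$, identify the limiting profile as a translate of $U_{V(y_0)}$, and show that the supremum reduces to a single value. The key structural observation is that, since by assumption $\partial\Lambda$ is a level line of $V$, the function $\mathcal{C} = b_{V(\cdot)}$ is constant on $\partial\Lambda$, so $\sup_{\partial\Lambda}\mathcal{C} = b_{V(y_0)}$ for any $y_0 \in \partial\Lambda$.

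For a sequence $\varepsilon_n \to 0$, I would pick $y_n \in \partial\Lambda_{\varepsilon_n}$ almost realising $a_{\varepsilon_n}$ and, using that $\overline{\Lambda}$ is compact and that \eqref{limsupLambdaepsilon} forces $\dist(y_n,\partial\Lambda) \to 0$, extract a subsequence $y_n \to y_0 \in \partial\Lambda$. Then I would perform the change of variables $x = y_n + \varepsilon_n z$, under which the rescaled profile is $t_{\varepsilon_n,y_n}\,\eta(y_n+\varepsilon_n z)\,U_{V(y_n)}(-z)$. Two facts make the rescaled problem tractable: $\eta \equiv 1$ on a neighbourhood of $\Lambda$, and \eqref{liminfLambdaepsilon} gives $\dist(y_n,\partial\Lambda)/\varepsilon_n \to \infty$. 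Combined with the exponential decay of $U_\nu$, uniform for $\nu$ in compact subsets of $(0,\infty)$, this ensures that essentially all the mass of the rescaled profile sits in the region where $\eta = 1$ and $y_n+\varepsilon_n z \in \Lambda$, so $g_{\varepsilon_n}$ reduces there to $s_+^p$.

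Using continuity of $V$ together with dominated convergence in the rescaled integrals, I would then pass to the limit in each piece of $\mathcal{J}_{\varepsilon_n}(w_{\varepsilon_n,y_n})$. Writing $t := \lim t_{\varepsilon_n,y_n}$ along a further subsequence,
\[
\varepsilon_n^{-N}\int_{\R^N} \bigl(\varepsilon_n^2\abs{\nabla w_{\varepsilon_n,y_n}}^2 + V\abs{w_{\varepsilon_n,y_n}}^2\bigr) \to t^2 \int_{\R^N} \bigl(\abs{\nabla U_{V(y_0)}}^2 + V(y_0)\abs{U_{V(y_0)}}^2\bigr),
\]
and similarly $\varepsilon_n^{-N}\int_{\R^N} G_{\varepsilon_n}(x,w_{\varepsilon_n,y_n})\,dx \to \tfrac{t^{p+1}}{p+1}\int_{\R^N} U_{V(y_0)}^{p+1}$. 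The Nehari identity for $w_{\varepsilon_n,y_n}$ passes to the limit and, combined with the equation $-\Delta U_{V(y_0)} + V(y_0) U_{V(y_0)} = U_{V(y_0)}^p$, forces $t^{p-1} = 1$, hence $t_{\varepsilon_n,y_n} \to 1$. Substituting gives $\varepsilon_n^{-N}\mathcal{J}_{\varepsilon_n}(w_{\varepsilon_n,y_n}) \to \mathcal{I}_{V(y_0)}(U_{V(y_0)}) = b_{V(y_0)} = \sup_{\partial\Lambda}\mathcal{C}$. Since this limit does not depend on the particular $y_0 \in \partial\Lambda$, the identical argument applied to any sequence $y_n \in \partial\Lambda_{\varepsilon_n}$ provides the matching lower bound for $\varepsilon_n^{-N}a_{\varepsilon_n}$, and the conclusion follows.

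The delicate point I anticipate is not the rescaling itself but the uniform-in-$y$ control of the portion of $w_{\varepsilon,y}$ that lives in $\R^N \setminus \Lambda$, where $g_\varepsilon$ is penalised and $V$ may even vanish. This should be handled by the exponential decay of $U_{V(y)}$, which makes that portion exponentially small in $1/\varepsilon$, together with the bound $(g_2)$ and Lemma~\ref{lemmaPositivity} to absorb the $H$-weighted tail into the quadratic part.
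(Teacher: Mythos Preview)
Your proposal is correct and follows essentially the same route as the paper: rescale \(w_{\varepsilon,y}\) around \(y\), use \eqref{liminfLambdaepsilon} together with the exponential decay of \(U_{V(y)}\) to reduce to the unpenalized functional, and identify the limit as \(\mathcal{I}_{V(y)}(U_{V(y)}) = b_{V(y)}\). The only packaging difference is that the paper states the outcome as a uniform-in-\(y\) expansion \(\mathcal{J}_\varepsilon(w_{\varepsilon,y}) = \varepsilon^N b_{V(y)} + o(\varepsilon^N)\) and then takes the supremum, whereas you extract convergent subsequences and invoke the level-line property of \(\partial\Lambda\) to conclude; both arguments are equivalent here.
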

\begin{proof}
 By a standard computation, we find in view of \eqref{liminfLambdaepsilon}
\begin{equation*}
 \mathcal{J}_{\varepsilon}( w_{\varepsilon,y} ) = \varepsilon^N \mathcal{I}_{V(y)}\left(U_{V(y)}\right) + o(\varepsilon^N)\:.
\end{equation*}
as \(\varepsilon \to 0\), uniformly in \(y \in \Lambda\).
Thus
\begin{equation*}
a_{\varepsilon} = \sup_{y \in \partial\Lambda_\varepsilon}  \mathcal{J}_{\varepsilon}( w_{\varepsilon,y} ) = \varepsilon^N \sup_{y \in \partial\Lambda_\varepsilon} 
b_{V(y)} + o(\varepsilon^N)\:.
\end{equation*}
The estimate follows from \eqref{estim:by}, \eqref{limsupLambdaepsilon} and the continuity of \(V\).
\end{proof}

\subsubsection{Upper estimate of the critical level \(c_{\varepsilon}\)}
The same method gives an upper estimate on \(c_{\varepsilon}\). 

\begin{lemma}\label{estim:sup2}
We have
\begin{equation*}
 \limsup_{\varepsilon \to 0}\varepsilon^{-N} c_{\varepsilon} \le \sup_{\Lambda} \mathcal{C}\:.
\end{equation*}
\end{lemma}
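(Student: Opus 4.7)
The natural strategy is to construct an explicit competitor path in \(\Gamma_\varepsilon\) and take its supremum. The obvious candidate is \(\gamma_\varepsilon^\ast(y) := w_{\varepsilon,y}\) for all \(y \in \overline{\Lambda}_\varepsilon\); on \(\partial \Lambda_\varepsilon\) it matches the prescribed boundary value by definition, so the only non-trivial point is to verify that \(\gamma_\varepsilon^\ast\) is continuous as a map into \(\mathcal{N}_\varepsilon\). For fixed \(\varepsilon\), the map \(y \mapsto \eta(\cdot) U_{V(y)}((y-\cdot)/\varepsilon)\) is continuous into \(H^1_V(\R^N)\) because \(V\) is continuous and positive on \(\supp\eta\), and the map \(\nu \mapsto U_\nu\) is continuous in \(H^1(\R^N)\) on \((0,\infty)\). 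The Nehari multiplier \(t_{\varepsilon,y}\) is the unique positive solution of \(\dualprod{\mathcal{J}_\varepsilon'(tu)}{tu}=0\) (unique by \((g_4)\)), and hence depends continuously on \(u\) via the implicit function theorem on \((0,\infty)\); so \(y \mapsto w_{\varepsilon,y}\) is continuous into \(\mathcal{N}_\varepsilon\).

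Granted this, we get immediately
\[
   c_\varepsilon \le \sup_{y \in \overline{\Lambda}_\varepsilon} \mathcal{J}_\varepsilon(w_{\varepsilon,y}).
\]
The plan is then to upgrade the pointwise computation from the proof of Lemma~\ref{lem:estimaeps} to a uniform estimate
\[
    \mathcal{J}_\varepsilon(w_{\varepsilon,y}) = \varepsilon^N \mathcal{I}_{V(y)}(U_{V(y)}) + o(\varepsilon^N), \qquad \varepsilon \to 0,
\]
uniformly in \(y \in \overline{\Lambda}\). Changing variables \(x = y - \varepsilon z\) and using that \(\eta \equiv 1\) in a neighbourhood of \(\Lambda\) (so \(\eta(y-\varepsilon z) = 1\) on balls whose radius tends to \(\infty\) uniformly in \(y \in \overline{\Lambda}\)) turns \(\varepsilon^{-N}\mathcal{J}_\varepsilon(w_{\varepsilon,y})\) into \(\mathcal{I}_{V(y)}(t_{\varepsilon,y} U_{V(y)})\) plus cutoff and potential remainders. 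The cutoff error is \(O(\varepsilon^N)\) times the exponentially small tail of \(U_{V(y)}\), and the potential remainder is controlled by the uniform continuity of \(V\) on \(\supp \eta\) together with the exponential decay of \(U_{V(y)}\); both can be bounded uniformly in \(y \in \overline{\Lambda}\) since \(\inf_{\supp \eta} V > 0\) and \(\{U_{V(y)}\}_{y \in \overline{\Lambda}}\) is a uniformly decaying family. The Nehari multiplier then satisfies \(t_{\varepsilon,y} \to 1\) uniformly, and one recovers \(\varepsilon^{-N}\mathcal{J}_\varepsilon(w_{\varepsilon,y}) \to \mathcal{I}_{V(y)}(U_{V(y)}) = b_{V(y)} = \mathcal{C}(y)\) uniformly.

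Taking the supremum over \(y \in \overline{\Lambda}_\varepsilon \subset \overline{\Lambda}\) and using continuity of \(\mathcal{C}\) on \(\overline{\Lambda}\) (which follows from the continuity of \(V\) and the formula \eqref{estim:by}) gives
\[
    \varepsilon^{-N} c_\varepsilon \le \sup_{y \in \overline{\Lambda}_\varepsilon} \mathcal{C}(y) + o(1) \le \sup_{\Lambda} \mathcal{C} + o(1),
\]
whence the claim on passing to the \(\limsup\). The only real obstacle is to make the asymptotic expansion genuinely uniform in \(y\); this is addressed by the compactness of \(\overline{\Lambda}\), the positive lower bound on \(V\) over \(\supp\eta\), and the locally uniform dependence \(\nu \mapsto U_\nu\). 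Continuity of \(y \mapsto w_{\varepsilon,y}\) is technically the next most delicate point, but it is a routine consequence of \((g_4)\) and the implicit function theorem.
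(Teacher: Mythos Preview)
Your approach is exactly the one the paper takes: the paper's proof is a two-line sketch that says to use the path \(y \mapsto w_{\varepsilon,y}\) for \(y \in \Lambda_\varepsilon\) as a competitor in the definition of \(c_\varepsilon\) and then invoke the uniform expansion \(\mathcal{J}_\varepsilon(w_{\varepsilon,y}) = \varepsilon^N \mathcal{I}_{V(y)}(U_{V(y)}) + o(\varepsilon^N)\) already recorded (uniformly in \(y \in \Lambda\)) in the proof of Lemma~\ref{lem:estimaeps}. You have simply fleshed out the details the paper leaves implicit---continuity of \(y \mapsto w_{\varepsilon,y}\) and the uniformity of the asymptotics---so there is no difference in method.
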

\begin{proof}
As a test path in \eqref{ceps}, we take \(w_{\varepsilon,y}\) defined by \eqref{wepsy} for every \(y \in \Lambda_\varepsilon\). We obtain the first
estimate after a straightforward computation in view of \eqref{liminfLambdaepsilon}. 
\end{proof}

\subsubsection{Lower estimate of the critical level \(c_{\varepsilon}\)}
A more delicate construction gives a lower estimate of the critical level \(c_{\varepsilon}\).

\begin{lemma}
\label{lemmaEstimatecepsilon}
If 
\[
 \sup_{\Lambda} \mathcal{C} > \inf_{\Lambda} \mathcal{C},
\]
then
\[ 
 \liminf_{\varepsilon \to 0} \varepsilon^{-N} c_{\varepsilon} > \inf_{\Lambda} \mathcal{C}.
\]
\end{lemma}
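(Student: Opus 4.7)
The plan is to argue by contradiction using Proposition~\ref{propositionAsymptoticsMoments} together with a Brouwer degree argument built on the barycenter map $\beta$. Assume that $\liminf_{\varepsilon \to 0} \varepsilon^{-N} c_\varepsilon \le \inf_\Lambda \mathcal{C}$; then I can extract a sequence $\varepsilon_n \to 0$ and paths $\gamma_n \in \Gamma_{\varepsilon_n}$ such that
\begin{equation*}
\sup_{y \in \overline{\Lambda_{\varepsilon_n}}} \mathcal{J}_{\varepsilon_n}\bigl(\gamma_n(y)\bigr) \le \varepsilon_n^N \inf_\Lambda \mathcal{C} + o(\varepsilon_n^N).
\end{equation*}
Since $\sup_\Lambda \mathcal{C} > \inf_\Lambda \mathcal{C}$ and the exponent $\frac{p+1}{p-1}-\frac{N}{2}$ is strictly positive under the hypothesis $\frac{1}{p} > \frac{N-2}{N+2}$, the explicit formula \eqref{estim:by} yields a point $x_* \in \Lambda$ with $V(x_*) > \inf_\Lambda V$. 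By \eqref{limsupLambdaepsilon}, for $n$ large $x_* \in \Lambda_{\varepsilon_n}$ and $\dist(x_*, \partial \Lambda_{\varepsilon_n})$ is bounded below by a positive constant.

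The core step is to consider the continuous map $H_n : \overline{\Lambda_{\varepsilon_n}} \to \R^N$ defined by $H_n(y) := \beta(\gamma_n(y))$, which is well defined because $\gamma_n(y) \in \mathcal{N}_{\varepsilon_n}$ satisfies $\psi \gamma_n(y) \ne 0$ by Lemma~\ref{lemENehariEstimate2}. On $\partial \Lambda_{\varepsilon_n}$ one has $\gamma_n(y) = w_{\varepsilon_n,y}$, and I would verify by a direct rescaling $x = y + \varepsilon_n z$ in the integrals defining $\beta$, combined with the radial symmetry of $U_{V(y)}$ around the origin, its exponential decay, and condition \eqref{liminfLambdaepsilon} (which ensures that $\eta$ and $\psi$ are identically $1$ on balls $B(y, R\varepsilon_n)$ for any fixed $R$ and $n$ large), that
\begin{equation*}
\sup_{y \in \partial \Lambda_{\varepsilon_n}} \abs{\beta(w_{\varepsilon_n,y}) - y} \longrightarrow 0 \qquad\text{as } n \to \infty.
\end{equation*}
Consequently, the straight-line homotopy $h_n(t,y) := (1-t) y + t\, H_n(y)$ avoids $x_*$ on $\partial \Lambda_{\varepsilon_n}$ for $n$ large, so by homotopy invariance of the Brouwer degree
\begin{equation*}
\deg(H_n, \Lambda_{\varepsilon_n}, x_*) = \deg(\id, \Lambda_{\varepsilon_n}, x_*) = 1,
\end{equation*}
which produces $y_n \in \Lambda_{\varepsilon_n}$ with $\beta(\gamma_n(y_n)) = x_*$.

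Setting $u_n := \gamma_n(y_n) \in \mathcal{N}_{\varepsilon_n}$, one has $\limsup_{n \to \infty} \varepsilon_n^{-N}\mathcal{J}_{\varepsilon_n}(u_n) \le \inf_\Lambda \mathcal{C}$, so Proposition~\ref{propositionAsymptoticsMoments} gives $V(\beta(u_n)) \to \inf_\Lambda V$. But $V(\beta(u_n)) = V(x_*) > \inf_\Lambda V$ for every $n$, a contradiction.

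The delicate point I expect is the uniform asymptotic $\beta(w_{\varepsilon_n,y}) \to y$ on $\partial \Lambda_{\varepsilon_n}$, since one has to track the interplay between the cut-off $\eta$, the Nehari rescaling factor $t_{\varepsilon,y}$, and the $y$-dependent profile $U_{V(y)}$ while keeping estimates uniform in $y \in \partial \Lambda_{\varepsilon_n}$; once this is in hand, the degree argument is routine because \eqref{limsupLambdaepsilon} forces $\partial\Lambda_{\varepsilon_n}$ to collapse onto $\partial\Lambda$ while $x_*$ remains a genuine interior point of $\Lambda$.
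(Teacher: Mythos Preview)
Your proof is correct and follows essentially the same approach as the paper: contradiction via Proposition~\ref{propositionAsymptoticsMoments}, using a Brouwer degree argument on the barycenter map \(\beta\circ\gamma_n\) to produce a point \(y_n\) with \(\beta(\gamma_n(y_n))=x_*\) where \(V(x_*)>\inf_\Lambda V\). The only organizational difference is that the paper isolates the degree argument into the preceding Lemma~\ref{lem:barycenter}, whereas you carry it out inline; the uniform estimate \(\beta(w_{\varepsilon,y})\to y\) on \(\partial\Lambda_\varepsilon\) that you flag as delicate is exactly what the paper uses there.
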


We do not know whether one has the natural stronger conclusion 
\[
 \liminf_{\varepsilon \to 0} \varepsilon^{-N} c_{\varepsilon} \geq \sup_{\Lambda} \mathcal{C}.
\]

\begin{lemma}\label{lem:barycenter}
Let \( x \in \Lambda\). There exists \(\varepsilon_0 > 0\) such that for every \(\varepsilon \in{} (0, \varepsilon_0)\) and every \(\gamma \in \Gamma_{\varepsilon}\), there exists \(z \in \Lambda_\varepsilon\) such that \(\beta (\gamma(z)) = x\).
\end{lemma}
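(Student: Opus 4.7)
My plan is to apply Brouwer degree theory to the continuous map \(\beta \circ \gamma \colon \Bar{\Lambda}_\varepsilon \to \R^N\). The crucial observation is that the boundary trace of every \(\gamma \in \Gamma_\varepsilon\) is fixed to be \(y \mapsto w_{\varepsilon, y}\), so \(\deg(\beta \circ \gamma, \Lambda_\varepsilon, x)\) depends only on \(y \mapsto \beta(w_{\varepsilon, y})\) restricted to \(\partial \Lambda_\varepsilon\). If this map is close to the identity on \(\partial \Lambda_\varepsilon\) when \(\varepsilon\) is small, a straight-line homotopy to \(\id\) will show that for any \(x \in \Lambda_\varepsilon\) the degree equals \(\deg(\id, \Lambda_\varepsilon, x) = 1\), whence \(x \in (\beta \circ \gamma)(\Lambda_\varepsilon)\).

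The first and main analytic step is the uniform estimate
\[
  \lim_{\varepsilon \to 0}\, \sup_{y \in \Bar{\Lambda}_\varepsilon} \abs{\beta(w_{\varepsilon, y}) - y} = 0.
\]
After the change of variable \(z = y - \varepsilon w\) in both integrals defining \(\beta(w_{\varepsilon, y})\), this difference equals
\[
  -\varepsilon\, \frac{\displaystyle \int_{\R^N} w\, \bigl(\psi(y - \varepsilon w)\,\eta(y - \varepsilon w)\, U_{V(y)}(w)\bigr)^2\, dw}{\displaystyle \int_{\R^N} \bigl(\psi(y - \varepsilon w)\,\eta(y - \varepsilon w)\, U_{V(y)}(w)\bigr)^2\, dw}.
\]
Since \(\psi\) and \(\eta\) are both identically \(1\) on a fixed open neighborhood \(W\) of the compact set \(\Bar{\Lambda}\), for every \(R > 0\) one has \(y - \varepsilon w \in W\) whenever \(y \in \Bar{\Lambda}\) and \(\abs{w} \le R\), provided \(\varepsilon\) is small. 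Combined with the exponential decay of \(U_{V(y)}\) at infinity and the continuous dependence of \(U_\nu\) on \(\nu\) in the compact interval \([\inf_\Lambda V, \sup_\Lambda V] \subset (0, \infty)\), this makes the dominated convergence theorem applicable \emph{uniformly} in \(y\); the radial symmetry of \(U_{V(y)}\) around the origin then forces the ratio above to tend to \(0\).

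With this estimate, set \(\delta := \dist(x, \partial \Lambda) > 0\). Using \eqref{limsupLambdaepsilon} together with the first step, for \(\varepsilon\) small every \(y \in \partial \Lambda_\varepsilon\) satisfies \(\abs{y - x} \ge 2\delta/3\) and \(\abs{\beta(w_{\varepsilon, y}) - y} < \delta/6\); moreover, the construction of \(\Lambda_\varepsilon\) ensures \(x \in \Lambda_\varepsilon\) for \(\varepsilon\) small. The straight-line homotopy
\[
 F(t, y) := (1 - t) y + t\, \beta(\gamma(y)), \qquad (t, y) \in [0, 1] \times \Bar{\Lambda}_\varepsilon,
\]
then satisfies \(\abs{F(t, y) - x} \ge \delta/2 > 0\) for every \(y \in \partial \Lambda_\varepsilon\) and \(t \in [0, 1]\), since \(\gamma(y) = w_{\varepsilon, y}\) there. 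Homotopy invariance of Brouwer degree yields
\[
  \deg(\beta \circ \gamma, \Lambda_\varepsilon, x) = \deg(\id, \Lambda_\varepsilon, x) = 1,
\]
which provides \(z \in \Lambda_\varepsilon\) with \(\beta(\gamma(z)) = x\).

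The main obstacle is the uniform estimate in the first step. The subtle point is that as \(y\) ranges over \(\Bar{\Lambda}_\varepsilon\), in particular near \(\partial \Lambda_\varepsilon\), which itself sits near \(\partial \Lambda\), the truncations \(\psi(y - \varepsilon w)\eta(y - \varepsilon w)\) must still equal \(1\) for all bounded \(w\); this is precisely why it is essential that \(\eta\) and \(\psi\) are identically \(1\) on a fixed open neighborhood of \(\Bar{\Lambda}\) rather than merely on \(\Lambda\) itself, and one additionally uses that \(V\) is bounded away from \(0\) and \(\infty\) on \(\Bar{\Lambda}\) to get continuous dependence and uniform decay across the family \((U_{V(y)})_{y \in \Bar{\Lambda}}\).
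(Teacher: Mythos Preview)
Your proposal is correct and follows essentially the same approach as the paper: establish the uniform estimate \(\beta(w_{\varepsilon,y}) = y + o(1)\) as \(\varepsilon \to 0\), observe that \(x \in \Lambda_\varepsilon\) for \(\varepsilon\) small, and then invoke Brouwer degree via the straight-line homotopy to the identity on \(\partial\Lambda_\varepsilon\). The paper's proof is considerably terser but structurally identical; your more careful justification of the uniform limit (change of variables, uniform exponential decay over \(\nu \in [\inf_\Lambda V, \sup_\Lambda V]\), and the fact that \(\psi\) and \(\eta\) equal \(1\) on a full neighborhood of \(\Bar\Lambda\)) fills in exactly the details the paper leaves implicit.
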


\begin{proof}
For every \(z \in \partial \Lambda_\varepsilon\), one has in view of the definition of \(w_{\varepsilon, y}\),
\[
 \beta(w_{\varepsilon, y} )=y + o(1), 
\]
as \(\varepsilon \to 0\), uniformly in \(z\).

Let \(\gamma \in \Gamma_\varepsilon\).
Therefore, if \(\varepsilon\) is small enough, one has for every 
\(y \in \partial \Lambda_\varepsilon\), by \eqref{limsupLambdaepsilon}, \(x \in \Lambda_{\varepsilon}\) for \(n\) large enough.

When \(\varepsilon\) is small enough, we have 
\[
 \sup_{y \in \partial \Lambda_{\varepsilon}} \abs{\beta(w_{\varepsilon, y} )-y} < \inf_{y \in \partial \Lambda_{\varepsilon}} \abs{y-x},
\]
therefore, by the properties of the topological degree,  if \(x \in \Lambda_{\varepsilon}\) there exists  \(z \in \Lambda_\varepsilon\) such that \(\beta (\gamma(z)) = x\).
\end{proof}

We follow the arguments of \cite{delPinoFelmer1997}. Heuristically, the idea is to show that a sequence of functions violating the strict 
inequality \eqref{estim} cannot have enough energy to stay concentrated inside \(\Lambda\) and must thus concentrate around a
point of \(\partial \Lambda\). But this would in fact contradict the continuity of the paths in \(\Gamma_{\varepsilon}\). 

\begin{proof}[Proof of Lemma~\ref{lemmaEstimatecepsilon}]
Assume by contradiction that there is a sequence \( (\varepsilon_n)_{n \in \N}\) such that \(\lim_{n \to \infty} \varepsilon_n = 0\) and \(\lim_{n \to \infty} \varepsilon_n^{-N} c_{\varepsilon_n} \le \inf_{\Lambda} \mathcal{C}\). 
By definition of \(c_\varepsilon\), there exists \(\gamma_n \in \Gamma_{\varepsilon_n}\) such that
\[
 \lim_{n \to \infty} \sup_{x \in \Lambda_{\varepsilon_n}} \varepsilon_n^{-N}\mathcal{J}_{\varepsilon_n}\bigl(\gamma_n(x)\bigr) \le \inf_{\Lambda} \mathcal{C}\:.
\]
Choose \(x\in \Lambda\) such that \(V(x) > \inf_{\Lambda} V\).
For each \(n \in \N\) large enough, let \(x_n\) be given by Lemma \ref{lem:barycenter} so that \(\beta(\gamma_n(x_n)) = x\). One has
\[
 \limsup_{n \to \infty} \varepsilon_n^{-N}\mathcal{J}_{\varepsilon_n}\bigl(\gamma_n(x_n)\bigr) \leq \inf_{\Lambda} \mathcal{C}\:.
\]
Proposition~\ref{propositionAsymptoticsMoments} brings then a contradiction.
\end{proof}

\subsection{Existence of a solution}

We are now in a position to prove the strict inequality \eqref{estim}.

\begin{lemma}
\label{lemmaStrict}
If 
\[
 \sup_{\Lambda} \mathcal{C} > \inf_{\Lambda} \mathcal{C}= \sup_{\partial\Lambda} \mathcal{C}\:,
\]
then
\[ 
 \liminf_{\varepsilon \to 0} \varepsilon^{-N} (c_{\varepsilon} - a_{\varepsilon})>0\:.
\]
\end{lemma}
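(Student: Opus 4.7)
The plan is to observe that this lemma is essentially a direct consequence of the two previous estimates on \(a_\varepsilon\) and \(c_\varepsilon\), combined with the two parts of the hypothesis.

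First, I would invoke Lemma~\ref{lem:estimaeps}, which gives the exact asymptotics
\[
  \lim_{\varepsilon \to 0} \varepsilon^{-N} a_{\varepsilon} = \sup_{\partial\Lambda} \mathcal{C}.
\]
Next, since the hypothesis includes the strict inequality \(\sup_\Lambda \mathcal{C} > \inf_\Lambda \mathcal{C}\), Lemma~\ref{lemmaEstimatecepsilon} applies and yields
\[
  \liminf_{\varepsilon \to 0} \varepsilon^{-N} c_{\varepsilon} > \inf_\Lambda \mathcal{C}.
\]
The second part of the hypothesis, namely \(\inf_\Lambda \mathcal{C} = \sup_{\partial\Lambda} \mathcal{C}\), identifies the right-hand side of this strict inequality with the limit of \(\varepsilon^{-N} a_\varepsilon\). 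Subtracting therefore gives
\[
  \liminf_{\varepsilon \to 0} \varepsilon^{-N}(c_\varepsilon - a_\varepsilon)
  \geq \liminf_{\varepsilon \to 0} \varepsilon^{-N} c_\varepsilon - \lim_{\varepsilon \to 0} \varepsilon^{-N} a_\varepsilon
  > \inf_\Lambda \mathcal{C} - \sup_{\partial\Lambda}\mathcal{C} = 0,
\]
which is the desired conclusion.

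There is no real obstacle here: all the analytic work (the upper bound for \(a_\varepsilon\) via the test functions \(w_{\varepsilon,y}\), and the genuinely delicate lower bound for \(c_\varepsilon\) via the barycenter/degree argument of Lemma~\ref{lem:barycenter} and the concentration analysis of Proposition~\ref{propositionAsymptoticsMoments}) has already been carried out. The only thing to check in writing up the proof is that the \(\liminf\) of a difference dominates the difference of the \(\liminf\) and \(\lim\), which is standard and uses crucially that \(\varepsilon^{-N} a_\varepsilon\) admits an actual limit rather than merely a \(\limsup\). In particular, no new estimate or compactness argument is needed at this step; the lemma is simply the bookkeeping step that packages Lemmas~\ref{lem:estimaeps} and~\ref{lemmaEstimatecepsilon} into the strict minimax separation \(c_\varepsilon > a_\varepsilon\) required to apply the General Minimax Principle.
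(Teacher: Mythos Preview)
Your proposal is correct and matches the paper's own proof, which simply states that the result follows directly from Lemmas~\ref{lem:estimaeps} and~\ref{lemmaEstimatecepsilon}. You have merely spelled out the subtraction step and the use of the equality \(\inf_{\Lambda}\mathcal{C}=\sup_{\partial\Lambda}\mathcal{C}\) explicitly, which is exactly what the paper intends.
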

\begin{proof}
This follows directly from Lemmas~\ref{lem:estimaeps} and \ref{lemmaEstimatecepsilon}.
\end{proof}

As a consequence of the General Minimax Principle, we have thus proved the following existence result for the penalized problem \eqref{problemPNLSE}.
\begin{proposition}
 Let \(N\geq 3\), \(1 < p < \frac{N+2}{N-2}\) and let \(g_{\varepsilon}: \R^N \times \R^+ \rightarrow \R\) be a function satisfying assumptions \((g_1)\)-
\((g_4)\). For \(\varepsilon > 0\) small enough, there exists \(u_\varepsilon \in \mathcal{N}_\varepsilon\) such that \(\mathcal{J}_{\varepsilon}(u_\varepsilon)= c_{\varepsilon}\) and \(\mathcal{J}_{\varepsilon}'(u_\varepsilon)= 0\). 
\end{proposition}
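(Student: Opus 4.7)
The plan is a direct application of the General Minimax Principle quoted above, applied to $\mathcal{J}_\varepsilon$ restricted to the Nehari manifold $\mathcal{N}_\varepsilon$, with $M = \overline{\Lambda}_\varepsilon$, $M_0 = \partial \Lambda_\varepsilon$, and $\Gamma_0$ the singleton consisting of the boundary map $y \mapsto w_{\varepsilon,y}$. Since $\mathcal{N}_\varepsilon$ is a natural constraint for $\mathcal{J}_\varepsilon$ (as recalled in the subsection on the Nehari manifold), a critical point of $\mathcal{J}_\varepsilon\vert_{\mathcal{N}_\varepsilon}$ produced on this manifold is automatically a critical point of $\mathcal{J}_\varepsilon$ on all of $H^1_V(\R^N)$.

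First I would check that $\Gamma_\varepsilon \neq \emptyset$: the map $y \mapsto w_{\varepsilon,y}$ is continuous from $\overline{\Lambda}_\varepsilon$ into $\mathcal{N}_\varepsilon$, thanks to continuity of $V$, the continuous dependence of $U_\nu$ on the parameter $\nu$ in $H^1(\R^N)$, and the uniqueness of the Nehari scaling $t_{\varepsilon,y}$. Using this map itself as an admissible test path and invoking Lemma~\ref{estim:sup2} gives $c_\varepsilon < \infty$.

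Next I would verify the two structural hypotheses of the principle. The Palais-Smale condition at level $c_\varepsilon$ is provided by Lemma~\ref{lem:PalaisSmale}; since $\mathcal{N}_\varepsilon$ is a natural constraint, a Palais-Smale sequence on the manifold yields a Palais-Smale sequence in $H^1_V(\R^N)$ by the standard Lagrange multiplier argument, so compactness is preserved. For the strict inequality $c_\varepsilon > a_\varepsilon$, I would observe that under $p < \frac{N+2}{N-2}$ the exponent $\frac{p+1}{p-1}-\frac{N}{2}$ appearing in \eqref{estim:by} is strictly positive, so the hypothesis $\sup_\Lambda V > \inf_\Lambda V = \sup_{\partial\Lambda} V$ of Theorem~\ref{theoremMainLambda} transfers to $\sup_\Lambda \mathcal{C} > \inf_\Lambda \mathcal{C} = \sup_{\partial\Lambda} \mathcal{C}$; Lemma~\ref{lemmaStrict} then delivers $c_\varepsilon - a_\varepsilon > 0$ for all sufficiently small $\varepsilon$.

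With all hypotheses of the General Minimax Principle satisfied for $\varepsilon$ small, the principle produces $u_\varepsilon \in \mathcal{N}_\varepsilon$ with $\mathcal{J}_\varepsilon(u_\varepsilon) = c_\varepsilon$ and $(\mathcal{J}_\varepsilon\vert_{\mathcal{N}_\varepsilon})'(u_\varepsilon) = 0$, hence $\mathcal{J}_\varepsilon'(u_\varepsilon) = 0$ in $(H^1_V(\R^N))'$ by the natural-constraint property. There is no real obstacle left at this stage: the substantive work was done in Lemma~\ref{lemmaStrict} and, behind it, in Proposition~\ref{propositionAsymptoticsMoments}, where the interplay of the geometry of $\Lambda$ with the concentration behavior of almost-minimizers forced the minimax level to strictly exceed the boundary level.
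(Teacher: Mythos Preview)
Your proposal is correct and follows essentially the same approach as the paper: the paper's proof simply cites the General Minimax Principle together with the Palais--Smale condition (Lemma~\ref{lem:PalaisSmale}) and the strict inequality of Lemma~\ref{lemmaStrict}. You add useful detail the paper leaves implicit---the nonemptiness of \(\Gamma_\varepsilon\), the transfer of the hypothesis on \(V\) to \(\mathcal{C}\) via positivity of \(\tfrac{p+1}{p-1}-\tfrac{N}{2}\), and the natural-constraint argument explaining why a critical point on \(\mathcal{N}_\varepsilon\) is a free critical point---but the skeleton is identical.
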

\begin{proof}
This follows from the general minimax principle (Theorem~\ref{lem:estimaeps}), the Palais-Smale condition coming from Lemma~\ref{lem:PalaisSmale} and the strict inequality of Lemma~\ref{lemmaStrict}.
\end{proof}

\section{Back to the original problem}

\label{sectionOriginalProblem}

\subsection{Asymptotics of solutions}
Thanks to the asymptotics of solutions of Section~\ref{sectAsymptoticsCritical} and the estimates on the critical level of Section~\ref{sectionEstimatesLevels}, we prove that the solution \(u_{\varepsilon}\) is single-peaked.

\begin{lemma}\label{lem:unifdecay}
Let \((u_{\varepsilon})_{\varepsilon>0}\) be a family such that for \(\varepsilon > 0\) small enough, \(\mathcal{J}_{\varepsilon}(u_\varepsilon)= c_{\varepsilon}\) and \(\mathcal{J}_{\varepsilon}'(u_\varepsilon)= 0\).
Let \((x_{\varepsilon})_{\varepsilon>0}\) in \(\Lambda\) be such that
\[
 \liminf_{\varepsilon\to 0} u_{\varepsilon}(x_{\varepsilon}) > 0\:.
\]
If
\[
\sup_{\Lambda}  \mathcal{C} < 2 \inf_{\Lambda}  \mathcal{C}\: ,
\]
then for every \(U \subset \R^N\) such that \(\Bar{U}\) is compact and \(\inf_U V > 0\),
\[
 \lim_{\substack{\varepsilon\to 0 \\ R \to \infty}} \norm{u_{\varepsilon}}_{L^{\infty}(U \setminus
 B(x_{\varepsilon},\varepsilon R))}   = 0\:.
\]
If moreover
\[
  \sup_{\Lambda} \mathcal{C} > \inf_{\Lambda} \mathcal{C}= \sup_{\partial\Lambda} \mathcal{C}\:,
\]
then
\[
 \liminf_{\varepsilon\to 0} d(x_{\varepsilon}, \R^N \setminus \Lambda) > 0\:.
\]
\end{lemma}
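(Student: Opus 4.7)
The plan is to prove both assertions by contradiction, exploiting the identity $\mathcal{J}_{\varepsilon}(u_{\varepsilon}) = c_{\varepsilon}$ together with the multi-peak lower bound of Proposition~\ref{estim:inf2}. The upper bound from Lemma~\ref{estim:sup2} gives $\limsup_{\varepsilon \to 0} \varepsilon^{-N} c_{\varepsilon} \le \sup_{\Lambda} \mathcal{C}$, which under the hypothesis $\sup_{\Lambda} \mathcal{C} < 2 \inf_{\Lambda} \mathcal{C}$ forbids a second concentration point; the strict lower bound of Lemma~\ref{lemmaEstimatecepsilon}, combined with the equality $\sup_{\partial \Lambda} \mathcal{C} = \inf_{\Lambda} \mathcal{C}$, forbids a boundary concentration.

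For the first assertion, suppose by contradiction that the uniform decay fails. Then there exist $U$ as in the statement, $\delta > 0$, and sequences $\varepsilon_n \to 0$, $R_n \to \infty$, and $y_n \in \bar{U}$ with $\abs{y_n - x_{\varepsilon_n}} \ge \varepsilon_n R_n$ and $u_{\varepsilon_n}(y_n) \ge \delta$. Extracting subsequences, $y_n \to \bar{y} \in \bar{U}$ and $x_{\varepsilon_n} \to \bar{x} \in \bar{\Lambda}$, with $V(\bar{x}) > 0$ and $V(\bar{y}) > 0$ by the positivity assumptions and continuity. Since $\abs{y_n - x_{\varepsilon_n}}/\varepsilon_n \ge R_n \to \infty$, Proposition~\ref{estim:inf2} applied with $M = 2$ yields
\[
  \liminf_{n \to \infty} \varepsilon_n^{-N} \mathcal{J}_{\varepsilon_n}(u_{\varepsilon_n}) \ge \mathcal{C}(\bar{x}) + \mathcal{C}(\bar{y}) \ge 2 \inf_{\Lambda} \mathcal{C},
\]
where the second inequality uses that $\bar{x}, \bar{y} \in \bar{\Lambda}$ (from the conclusion of Proposition~\ref{estim:inf2}) together with the continuity and positivity of $V$ on $\bar{\Lambda}$. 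This contradicts $\limsup_{n \to \infty} \varepsilon_n^{-N} \mathcal{J}_{\varepsilon_n}(u_{\varepsilon_n}) \le \sup_{\Lambda} \mathcal{C} < 2 \inf_{\Lambda} \mathcal{C}$.

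For the second assertion, suppose that $d(x_{\varepsilon_n}, \R^N \setminus \Lambda) \to 0$ along some subsequence, and pass to a further subsequence so that $x_{\varepsilon_n} \to \bar{x} \in \partial \Lambda$. Choose any open set $U$ with $\bar{U}$ compact, $\bar{\Lambda} \subset U$, and $\inf_U V > 0$; the first assertion, applied to this $U$, verifies the additional hypothesis of the ``furthermore'' clause of Proposition~\ref{estim:inf2}. Applying that clause with $M = 1$ and $x^1_n = x_{\varepsilon_n}$ gives
\[
  \lim_{n \to \infty} \varepsilon_n^{-N} c_{\varepsilon_n} = \mathcal{C}(\bar{x}) \le \sup_{\partial \Lambda} \mathcal{C} = \inf_{\Lambda} \mathcal{C},
\]
contradicting Lemma~\ref{lemmaEstimatecepsilon}. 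The main technical point is the first part, in which the pointwise non-decay assumption must be upgraded into a genuine second concentration profile contributing at least $\inf_{\Lambda} \mathcal{C}$ to the asymptotic energy; the compactness of $\bar{U}$, the separation $R_n \to \infty$, and the positivity of $V$ on $\bar{U}$ are precisely what enables Proposition~\ref{estim:inf2} to deliver this bound, after which the quantitative hypothesis $\sup_{\Lambda} \mathcal{C} < 2 \inf_{\Lambda} \mathcal{C}$ closes the argument.
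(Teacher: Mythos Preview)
Your proof is correct and follows essentially the same approach as the paper: both assertions are proved by contradiction, the first by combining the upper bound of Lemma~\ref{estim:sup2} with the two-peak lower bound from Proposition~\ref{estim:inf2}, and the second by using the first assertion to activate the ``furthermore'' clause of Proposition~\ref{estim:inf2} and then contradicting Lemma~\ref{lemmaEstimatecepsilon}. Your write-up is in fact slightly more explicit than the paper's in extracting convergent subsequences and in spelling out why \(\bar{x},\bar{y}\in\bar{\Lambda}\) (via the conclusion of Proposition~\ref{estim:inf2}) and why a single admissible \(U\) suffices to verify the infimum hypothesis.
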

\begin{proof}
First we prove that
\[
 \lim_{\substack{\varepsilon\to 0 \\ R \to \infty}} \norm{u_{\varepsilon}}_{L^{\infty}(U \setminus
 B(x_{\varepsilon},\varepsilon R))} = 0\:.
\]
Assume by contradiction that there exist sequences \((\varepsilon_n)_{n\in \N}\) and
\((y_n)_{n\in \N}\) such that for every \(n \in \N\), \(y_n \in U\),
\begin{align*}
 \lim_{n\to \infty} \varepsilon_n &= 0\:, & \liminf_{n\to \infty} u_{\varepsilon_n}(y_n) &> 0 &
&\text{and} & \lim_{n\to \infty} \frac{\abs{x_{\varepsilon_n}-y_n}}{\varepsilon_n} &= +\infty\:.
\end{align*}
Then, by Lemma~\ref{estim:sup2},
\begin{equation*}
 \limsup_{n\to\infty} \varepsilon_n^{-N} \mathcal{J}_{\varepsilon_n}(u_{\varepsilon_n}) \le \sup_{\Lambda} \mathcal{C}\:,
\end{equation*} 
while by Proposition~\ref{estim:inf2}
\begin{equation*}
  \liminf_{n\to\infty} \varepsilon_n^{-N} \mathcal{J}_{\varepsilon_n}(u_{\varepsilon_n}) 
   \geq \liminf_{n\to\infty} \bigl( \mathcal{C}(x_{\varepsilon_n}) 
    + \mathcal{C}(y_n) \bigr) \geq 2 \inf_{\Lambda} \mathcal{C}\: .
\end{equation*}
This is a contradiction with our assumption.

\medbreak
Now we turn to the second assertion. Assume by contradiction that there exists a sequence \((\varepsilon_n)_{n\in \N}\) such that
\(\varepsilon_n \to 0\) and \(\lim_{n\to \infty} \dist (x_{\varepsilon_n}, \R^N\setminus \Lambda) = 0\). Then, by the first assertion, the second part of Proposition~\ref{estim:inf2}
and Lemma~\ref{lemmaEstimatecepsilon},
\begin{equation*}
\inf_{\Lambda} \mathcal{C}
< \liminf_{n\to\infty}  \varepsilon_n^{-N} \mathcal{J}_{\varepsilon_n}(u_{\varepsilon_n}) 
= \liminf_{n\to\infty} \mathcal{C}(x_{\varepsilon_n})
= \sup_{\partial\Lambda} \mathcal{C}\:.
\end{equation*}
But this contradicts our assumption.
\end{proof}

This allows now to show that \(u_\varepsilon\) is a subsolution for some second-order linear elliptic operator.

\begin{lemma}
\label{lemmaIneqSmallBalls}
 Let \((u_{\varepsilon})_{\varepsilon>0}\) be a family of positive solutions of \eqref{problemPNLSE} at the level \(c_{\varepsilon}\)
and let \((x_{\varepsilon})_{\varepsilon>0} \subset \Lambda\) be such that
\[
 \liminf_{\varepsilon\to 0} u_{\varepsilon}(x_{\varepsilon}) > 0.
\]
Then there exists \(\varepsilon_0 > 0\) and \(R>0\) such that for all \(\varepsilon \in (0,\varepsilon_0)\),
\[
 -\varepsilon^2 (\Delta + \mu H) u_{\varepsilon} + (1-\mu) V u_{\varepsilon} \leq 0 \hspace{0.5cm} \text{in} \ \R^N\setminus
 B(x_{\varepsilon},\varepsilon R).
\]
\end{lemma}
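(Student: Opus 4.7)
The plan is to turn the stated differential inequality into a pointwise algebraic bound on $g_\varepsilon$ and then verify that bound region by region using the already-established uniform decay estimate from Lemma~\ref{lem:unifdecay}. Since $u_\varepsilon$ solves~\eqref{problemPNLSE}, substituting $-\varepsilon^2 \Delta u_\varepsilon = g_\varepsilon(x, u_\varepsilon) - V u_\varepsilon$ into the desired inequality $-\varepsilon^2 (\Delta + \mu H) u_\varepsilon + (1-\mu) V u_\varepsilon \le 0$ shows that it is equivalent (for a.e.\ $x$) to the pointwise bound
\[
  g_\varepsilon\bigl(x, u_\varepsilon(x)\bigr) \le \mu \bigl(\varepsilon^2 H(x) + V(x)\bigr) u_\varepsilon(x).
\]
The task thus reduces to verifying this on $\R^N \setminus B(x_\varepsilon, \varepsilon R)$ for suitable $R$ and $\varepsilon$.

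On $\R^N \setminus \Lambda$ the bound is property~$(g_2)$ itself, using positivity of $u_\varepsilon$; this holds for every $\varepsilon > 0$ and every $R$. On $\Lambda$ we have $H \equiv 0$ by the definition of the penalization potential and $g_\varepsilon(x, u_\varepsilon) = u_\varepsilon^p$, so the bound reduces to the scalar inequality $u_\varepsilon^{p-1} \le \mu V$. To get this on $\Lambda \setminus B(x_\varepsilon, \varepsilon R)$ I would apply Lemma~\ref{lem:unifdecay} with $U$ a bounded open neighborhood of $\bar\Lambda$ with $\inf_U V > 0$, which exists because $V$ is continuous and $\inf_\Lambda V \ge \sup_{\partial\Lambda} V > 0$ under the hypotheses of Theorem~\ref{theoremMainLambda}. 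The two hypotheses of Lemma~\ref{lem:unifdecay}, namely $\sup_\Lambda \mathcal{C} < 2 \inf_\Lambda \mathcal{C}$ and $\sup_\Lambda \mathcal{C} > \inf_\Lambda \mathcal{C} = \sup_{\partial\Lambda} \mathcal{C}$, follow from the corresponding conditions on $V$ in Theorem~\ref{theoremMainLambda} together with the formula~\eqref{estim:by}, since the exponent $\tfrac{p+1}{p-1} - \tfrac{N}{2}$ is positive precisely when $p < \tfrac{N+2}{N-2}$, so $\mathcal{C}$ is a strictly increasing function of $V$.

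Lemma~\ref{lem:unifdecay} then yields $\|u_\varepsilon\|_{L^\infty(U \setminus B(x_\varepsilon, \varepsilon R))} \to 0$ as $\varepsilon \to 0$ and $R \to \infty$. Choosing first $R$ large and then $\varepsilon_0$ small enough, we can ensure
\[
 \|u_\varepsilon\|_{L^\infty(\Lambda \setminus B(x_\varepsilon, \varepsilon R))}^{p-1} \le \mu \inf_\Lambda V,
\]
so that $u_\varepsilon(x)^{p-1} \le \mu V(x)$ throughout $\Lambda \setminus B(x_\varepsilon, \varepsilon R)$. Combining with the exterior case completes the verification of the algebraic bound, hence of the claimed differential inequality. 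The only non-routine input is the uniform smallness of $u_\varepsilon$ away from $x_\varepsilon$; everything else is an algebraic manipulation of the definitions of $g_\varepsilon$ and $H$. Consequently the main care in the proof is purely bookkeeping: matching the $V$-side hypotheses of Theorem~\ref{theoremMainLambda} to the $\mathcal{C}$-side hypotheses of Lemma~\ref{lem:unifdecay}, which the monotonicity of $V \mapsto V^{(p+1)/(p-1) - N/2}$ handles cleanly.
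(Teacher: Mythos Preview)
Your proposal is correct and follows exactly the route the paper intends: the paper's own proof simply says the result ``follows from Lemma~\ref{lem:unifdecay}'' and refers to \cite{MorozVanSchaftingen2010}*{Lemma 5.1} for the details, and those details are precisely the algebraic reduction to \(g_\varepsilon(x,u_\varepsilon)\le \mu(\varepsilon^2 H+V)u_\varepsilon\) together with the region-by-region verification you carry out. One minor remark: for this lemma you only need the first conclusion of Lemma~\ref{lem:unifdecay} (uniform smallness on \(U\setminus B(x_\varepsilon,\varepsilon R)\)), hence only the hypothesis \(\sup_\Lambda\mathcal{C}<2\inf_\Lambda\mathcal{C}\); the second hypothesis is not needed here.
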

\begin{proof}
This follows from Lemma~\ref{lem:unifdecay}, see \cite{MorozVanSchaftingen2010}*{Lemma 5.1} for the details.
\end{proof}

We then have a comparison principle \cite{MorozVanSchaftingen2010}*{Lemma 3.2}.

\begin{lemma}
\label{lemmaComparison}       
Let \(\Omega \subset \R^N\) be a domain with a smooth boundary. 
Let $v \in H^1_V (\Omega)$ and \(w \in H^1_\mathrm{loc} (\Bar{\Omega})\) be such that \(w \ge 0\) in \(\Omega\).
If
\begin{equation*}
 - \varepsilon^2 (\Delta + \mu H) v + (1 - \mu) V v
\le - \varepsilon^2 (\Delta + \mu H) w + (1 - \mu) V w, \qquad \text{weakly in \(\Omega\)}.
\end{equation*}
and \(v \leq w\) on \(\partial\Omega\),
then \(v \leq w\) in \(\Omega\).
\end{lemma}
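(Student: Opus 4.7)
The plan is to test the assumed weak differential inequality against the nonnegative function \(\varphi := (v-w)_+\), and to combine the Dirichlet form on \(\varphi\) with the positivity principle of Lemma~\ref{lemmaPositivity} to force \(\varphi \equiv 0\).

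First I would verify admissibility. Since \(v \in H^1_V(\Omega)\) and \(w \in H^1_{\mathrm{loc}}(\bar\Omega)\) with \(w\ge 0\), the difference \(v-w\) lies in \(H^1_{\mathrm{loc}}(\bar\Omega)\); and on \(\{v \ge w\}\) one has \(0 \le \varphi \le v_+\), so \(\int_\Omega V \varphi^2 \le \int_\Omega V v^2 < \infty\). The trace inequality \(v \le w\) on \(\partial\Omega\) ensures that \(\varphi\) has vanishing trace, hence its extension by zero to \(\R^N\) belongs to \(H^1_V(\R^N)\). Consequently \(\varphi\) can be used as a test function in the weak inequality (approximating by \(\eta_R \varphi\) with a cutoff \(\eta_R\) supported in \(B(0,2R)\) and equal to \(1\) on \(B(0,R)\), and letting \(R\to\infty\) with dominated convergence in \(H^1_V\)).

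Subtracting the two sides of the hypothesis and testing against \(\varphi\), the identities \(\nabla(v-w)\cdot\nabla\varphi = |\nabla\varphi|^2\) and \((v-w)\varphi = \varphi^2\) almost everywhere yield
\[
\int_\Omega \Bigl(\varepsilon^2 |\nabla\varphi|^2 - \varepsilon^2 \mu H \varphi^2 + (1-\mu) V \varphi^2 \Bigr) \le 0.
\]
The crucial algebraic step is to split the integrand as
\[
\mu \bigl(\varepsilon^2 |\nabla\varphi|^2 - \varepsilon^2 H \varphi^2 \bigr) + (1-\mu)\bigl(\varepsilon^2 |\nabla\varphi|^2 + V \varphi^2 \bigr),
\]
where the first term integrates to a nonnegative quantity by Lemma~\ref{lemmaPositivity} (applied to \(\varphi\) extended by zero, using density of \(C^\infty_c(\R^N)\) in \(H^1_V(\R^N)\)), while the second term is pointwise nonnegative. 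Both integrated pieces must therefore vanish, and in particular \((1-\mu)\int_\Omega \varepsilon^2 |\nabla\varphi|^2 + V \varphi^2 = 0\). Since \(\mu<1\), this gives \(\varphi \equiv 0\) in \(\Omega\), i.e.\ \(v \le w\) almost everywhere.

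The main obstacle is the admissibility/density step: one must justify that \(\varphi\), extended by zero, is a legitimate test function both in the weak form of the hypothesis (so that the integration by parts is valid) and in Lemma~\ref{lemmaPositivity} (which is stated for \(C^\infty_c\) functions). This is where the assumption \(v \in H^1_V(\Omega)\), combined with \(w \ge 0\) and the vanishing boundary trace of \(\varphi\), really does the work: it gives enough decay at infinity to approximate \(\varphi\) by compactly supported smooth functions in \(H^1_V\), at which point both the testing and Lemma~\ref{lemmaPositivity} apply by continuity.
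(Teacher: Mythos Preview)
Your overall strategy is the same as the paper's: test the differential inequality against \((v-w)_+\) and invoke Lemma~\ref{lemmaPositivity}. However, there is a genuine gap in your admissibility argument. You assert that \(\varphi = (v-w)_+\), extended by zero, belongs to \(H^1_V(\R^N)\), but you only verify \(\int_\Omega V\varphi^2 < \infty\) via \(0 \le \varphi \le v_+\). You never check that \(\nabla\varphi \in L^2(\Omega)\); since \(w\) is merely in \(H^1_{\mathrm{loc}}(\bar\Omega)\), there is no reason for \(\nabla w\) (hence \(\nabla\varphi\)) to be globally square-integrable on \(\{v>w\}\). Consequently the phrase ``letting \(R\to\infty\) with dominated convergence in \(H^1_V\)'' is circular: dominated convergence in \(H^1_V\) presupposes \(\varphi\in H^1_V\), which is exactly what has not been established.

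The paper avoids this by never claiming \(\varphi\in H^1_V\). It tests with the compactly supported function \(\psi_n^2\varphi\) (which \emph{is} in \(H^1_V\) because \(w\in H^1_{\mathrm{loc}}\)) and obtains
\[
(1-\mu)\int_\Omega \bigl(\varepsilon^2|\nabla(\psi_n\varphi)|^2 + V|\psi_n\varphi|^2\bigr) \le \varepsilon^2\int_\Omega |\nabla\psi_n|^2\,\varphi^2 \le C\int_{B_{2n}\setminus B_n}\frac{|v|^2}{|x|^2}\,dx,
\]
where the last bound uses \(\varphi \le v\) (from \(w\ge 0\)) and \(|\nabla\psi_n|\le C/n\). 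The right-hand side tends to zero by the Hardy inequality \(\int_{\R^N}|v|^2/|x|^2 \le C\int_{\R^N}|\nabla v|^2\) together with dominated convergence. One then concludes \(\psi_n\varphi\to 0\) in \(H^1_V\), hence \(\varphi=0\). The Hardy step is precisely the ``decay at infinity'' you allude to in your last paragraph but do not carry out; it is the real content of the passage to the limit, and it controls the cutoff error directly rather than relying on any a priori membership of \(\varphi\) in \(H^1_V\).
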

\begin{proof}
Take \(\psi \in C^\infty_c(\R^N)\) such that \(\psi \equiv 1\) on \(B_1\) and \(\supp \psi \subset B_2\) and define \(\psi_n\in C^\infty_c(\R^N)\)
for \(n \in \N\) and \(x \in \R^N\) by \(\psi_n(x)=\psi(x/n)\). 
Using \( \psi_n^2(v - w)_+ \in H^1_V(\Omega)\) as a test function in the inequation, one has
\begin{multline*}
 \int_{\Omega} \bigl(\varepsilon^2 \abs{\nabla (\psi_n (v - w)_+)}^2 - \varepsilon^2 \mu H \abs{\psi_n (v - w)_+}^2 + (1 - \mu) V \abs{\psi_n (v -w)_+}^2 \bigr)\\
 \le \varepsilon^2 \int_{\Omega} \abs{\nabla \psi_n}^2 (v - w)_+^2\:.
\end{multline*}
By Lemma~\ref{lemmaPositivity} on the one hand and by  
definition of \(\psi_n\) and nonnegativity of \(w\) on the other hand, we have
\[
(1 - \mu) \int_{\Omega} \varepsilon^2 \abs{\nabla (\psi_n (v - w)_+)}^2 +  V \abs{\psi_n (v - w)_+}^2
\le C \int_{B_{2 n} \setminus B_n} \frac{\abs{v(x)}^2}{\abs{x}^2}\,dx.
\]
By Lebesgue dominated convergence, we deduce that \( \psi_n (v - w)_+ \to 0\) strongly in \(H^1_V (\Omega)\) as \(n \to \infty\). Hence, \( (v - w)_+ = 0\).
\end{proof}

\subsection{Barrier functions and solution of the original problem}
Since \(u_\varepsilon\) is a subsolution for some second-order linear elliptic operator, we shall compare it with supersolutions of that operator. We first recall how suitable supersolutions can be constructed.

\subsubsection{The case of fast decaying potentials}

Indepentently of the decay rate of \(V\) we have

\begin{lemma}
\label{lemmaBarrierFast}
Let \((x_{\varepsilon})_{\varepsilon} \subset \Lambda\) be such that \(\liminf_{\varepsilon\to 0} d(x_{\varepsilon},\partial \Lambda) > 0\), let \(\mu \in (0,1)\) and let \(R>0\).
If \(N \ge 3\), then there exists \(\varepsilon_0 > 0\) and a family of functions \((W_{\varepsilon})_{0<\varepsilon<\varepsilon_0}\) in \(C^{1,1}(\R^N\setminus B(x_{\varepsilon},\varepsilon R))\)
such that, for \(\varepsilon \in (0,\varepsilon_0)\),
\begin{enumerate}[(i)]
\item \(W_{\varepsilon}\) satisfies the inequation
\[
 -\varepsilon^2 (\Delta + \mu H) W_{\varepsilon} + (1-\mu) V W_{\varepsilon} \geq 0 \hspace{0.5cm} \text{in} \ \R^N\setminus
 B(x_{\varepsilon},\varepsilon R),
\]
\item \(\nabla W_{\varepsilon} \in L^2(\R^N\setminus B(x_{\varepsilon},\varepsilon R))\),
\item \(W_{\varepsilon} = 1\) on \(\partial B(x_{\varepsilon},\varepsilon R)\),
\item there exist \(C, \lambda, \nu > 0\) such that for every \(x \in \R^N \setminus B(x_{\varepsilon},\varepsilon R)\),
\[
 W_{\varepsilon}(x) \leq C \exp \left( -\frac{\lambda}{\varepsilon} \frac{\abs{x-x_{\varepsilon}}}{1+\abs{x-x_{\varepsilon}}} \right) \left( 1+\abs{x}^2 \right)^{-\frac{N-2}{2}}.
\]
\end{enumerate}
\end{lemma}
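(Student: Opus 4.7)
The plan is to build $W_\varepsilon$ as a composite barrier: an exponentially decaying function in a neighborhood of $x_\varepsilon$ (where $V$ is bounded below), matched to a Hardy-type polynomially decaying function in the exterior region (where the penalization potential $H$ dominates).

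Since $\liminf_{\varepsilon \to 0} d(x_\varepsilon, \partial \Lambda) > 0$, I would first fix $\rho_1 > 0$ so that $\overline{B(x_\varepsilon, \rho_1)} \subset \Lambda$ for all small $\varepsilon$, and set $V_0 := \inf_\Lambda V > 0$. On the inner annulus $B(x_\varepsilon, \rho_1) \setminus B(x_\varepsilon, \varepsilon R)$ the penalization potential $H$ vanishes and the inequation in (i) reduces to $-\varepsilon^2 \Delta W + (1-\mu) V W \geq 0$; a radial profile
\[
 W^{\mathrm{in}}_\varepsilon(x) := \exp\bigl(-\tfrac{\lambda}{\varepsilon}(\abs{x - x_\varepsilon}-\varepsilon R)\bigr) \bigl(\tfrac{\varepsilon R}{\abs{x - x_\varepsilon}}\bigr)^{(N-1)/2}
\]
is a supersolution when $\lambda^2 < (1-\mu) V_0$ (the polynomial prefactor absorbs the radial term $(N-1)W'/r$ in $\Delta$), equal to $1$ on $\partial B(x_\varepsilon, \varepsilon R)$.

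Outside $B(x_\varepsilon, \rho_1)$ the lower bound on $V$ is lost, and one must exploit the strict subcriticality of $H$ encoded in the logarithmic factor $L(r) := \log(\rho/\rho_0)/\log(r/\rho_0)$ together with $\mu < 1$. Following Moroz--Van Schaftingen, I would take an outer radial barrier
\[
 W^{\mathrm{out}}_\varepsilon(x) := b_\varepsilon \abs{x - x_0}^{-(N-2)/2} \phi\bigl(\log(\abs{x-x_0}/\rho_0)\bigr),
\]
with $\phi$ a suitably chosen decreasing function. The identity $-\Delta(r^{-(N-2)/2} f(\log r)) = r^{-(N-2)/2 - 2} ((N-2)^2 f/4 - f'')$ shows that $W^{\mathrm{out}}_\varepsilon$ satisfies $-\Delta W^{\mathrm{out}}_\varepsilon \geq \mu H W^{\mathrm{out}}_\varepsilon$ provided $\phi'' \leq ((N-2)^2/4)(1 - \mu L(r)^{1+\beta}) \phi$; the strictly positive factor $1-\mu L^{1+\beta}$ makes such a $\phi$ available, and since $V \geq 0$ contributes favorably, $W^{\mathrm{out}}_\varepsilon$ also satisfies the full inequation (i).

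The two pieces are then glued on $\partial B(x_\varepsilon, \rho_1)$ by setting $W_\varepsilon := \min(W^{\mathrm{in}}_\varepsilon, W^{\mathrm{out}}_\varepsilon)$ after choosing $b_\varepsilon$ so that the two profiles match there; since $W^{\mathrm{in}}_\varepsilon$ decays like $\exp(-\lambda \rho_1/\varepsilon)$ across the inner annulus, $b_\varepsilon$ is exponentially small in $1/\varepsilon$, which produces (after absorbing constants) the saturating factor $\exp(-\tfrac{\lambda}{\varepsilon}\tfrac{\abs{x-x_\varepsilon}}{1+\abs{x-x_\varepsilon}})$ appearing in (iv). A standard mollification near the matching sphere restores the $C^{1,1}$ regularity; (ii) and (iii) are immediate from the construction, and (iv) follows by combining the exponential and polynomial decay factors. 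The main obstacle is the precise choice of $\phi$: it must simultaneously be a supersolution against the near-critical Hardy weight, dominate the inner barrier on the gluing sphere, and decay fast enough that the stated polynomial bound $(1+\abs{x}^2)^{-(N-2)/2}$ holds after absorbing $b_\varepsilon$ into the constant $C$, which forces a delicate interplay between $\phi$, the logarithmic slack in the definition of $H$, and the strict inequality $\mu < 1$.
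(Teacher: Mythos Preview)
Your outline is correct and coincides with the paper's approach: the paper's entire proof is the one-line remark that ``the arguments are the same as those of V.~Moroz and J.~Van~Schaftingen \cite{MorozVanSchaftingen2010}*{Lemma 5.2}, since the penalization potential \(H\) is the same,'' and your sketch is precisely a summary of that construction (inner exponential barrier where \(V\) is bounded below, outer near-Hardy barrier built from \(r^{-(N-2)/2}\phi(\log r)\), matched on an intermediate sphere). You have also correctly isolated the one genuinely delicate point, namely the choice of \(\phi\): a pure exponential \(\phi(t)=e^{-\alpha t}\) with a fixed \(\alpha\) only yields \(r^{-(N-2)/2-\alpha}\) with \(\alpha<\tfrac{N-2}{2}\sqrt{1-\mu}\) near the gluing region, which is not enough for the stated bound \((1+\abs{x}^2)^{-(N-2)/2}\); in \cite{MorozVanSchaftingen2010} this is handled by letting the exponent vary with \(t\) (equivalently, taking \(\phi(t)=\exp(-\int_0^t\alpha(s)\,ds)\) with \(\alpha(s)\nearrow\tfrac{N-2}{2}\) as \(s\to\infty\), using that \(L(r)\to 0\)), so that the full decay \(r^{-(N-2)}\) is recovered asymptotically.
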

\begin{proof}
The arguments are the same as those of V.\thinspace Moroz and J.\thinspace Van Schaftingen \cite{MorozVanSchaftingen2010}*{Lemma 5.2}, since the penalization potential \(H\) is the same.
\end{proof}

The decay of \(u_\varepsilon\) is then similar to the decay of \(W_\varepsilon\).

\begin{proposition}
\label{propositionDecayFast}
Let \((u_{\varepsilon})_{\varepsilon>0}\) be a family of positive solutions of \eqref{problemPNLSE} at the level \(c_{\varepsilon}\)
and let \((x_{\varepsilon})_{\varepsilon>0} \subset \Lambda\) be such that
\[
 \liminf_{\varepsilon\to 0} u_{\varepsilon}(x_{\varepsilon}) > 0\:.
\]
If 
\(N \ge 3\) 
then there exists \(C,\lambda > 0\) and \(\varepsilon_0 > 0\) and \(R>0\) such that for all \(\varepsilon \in (0,\varepsilon_0)\),
\begin{equation*}
  u_{\varepsilon}(x) \leq C \exp \left( -\frac{\lambda}{\varepsilon} \frac{\abs{x-x_{\varepsilon}}}{1+\abs{x-x_{\varepsilon}}} \right) 
\left( 1+\abs{x}^2 \right)^{-\frac{N-2}{2}},
 \qquad x \in \R^N.
\end{equation*}
\end{proposition}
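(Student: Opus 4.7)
The plan is to combine the three preceding results into a standard sub/supersolution argument. Lemma~\ref{lemmaIneqSmallBalls} tells us that \(u_\varepsilon\) is a subsolution of the linear operator \(-\varepsilon^2(\Delta+\mu H)+(1-\mu)V\) on \(\Omega_\varepsilon := \R^N\setminus B(x_\varepsilon,\varepsilon R)\); Lemma~\ref{lemmaBarrierFast} produces a supersolution \(W_\varepsilon\) of the same operator on the same set, with precisely the decay asserted in the conclusion; and Lemma~\ref{lemmaComparison} is the comparison principle adapted to this operator. The claimed estimate should then follow by applying the comparison principle to \(u_\varepsilon\) and a suitable multiple \(MW_\varepsilon\).

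Concretely, I would first fix \(R>0\) and \(\varepsilon_0>0\) so that both Lemma~\ref{lemmaIneqSmallBalls} and Lemma~\ref{lemmaBarrierFast} apply on \(\Omega_\varepsilon\), shrinking \(\varepsilon_0\) if needed. Next I would establish a uniform bound \(u_\varepsilon(x)\le M\) for \(x\in\partial B(x_\varepsilon,\varepsilon R)\) with \(M>0\) independent of \(\varepsilon\). For this I would invoke Lemma~\ref{lemuepsrescaledcompact}: the rescaled functions \(v_\varepsilon(y):=u_\varepsilon(x_\varepsilon+\varepsilon y)\) are relatively compact in \(C^1_\mathrm{loc}(\R^N)\) (Lemma~\ref{lem:bounded} provides the required energy bound at scale \(\varepsilon^N\) together with the minimax upper bound from Lemma~\ref{estim:sup2}), so in particular they are uniformly bounded on the sphere of radius \(R\), which gives the desired \(L^\infty\) bound on \(\partial B(x_\varepsilon,\varepsilon R)\).

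I would then apply Lemma~\ref{lemmaComparison} on \(\Omega=\Omega_\varepsilon\) with \(v=u_\varepsilon\) and \(w=MW_\varepsilon\). The operator inequality
\[
-\varepsilon^2(\Delta+\mu H)v+(1-\mu)Vv\le 0\le -\varepsilon^2(\Delta+\mu H)w+(1-\mu)Vw
\]
holds weakly on \(\Omega_\varepsilon\) by Lemmas~\ref{lemmaIneqSmallBalls} and~\ref{lemmaBarrierFast}(i); the boundary inequality \(v\le w\) on \(\partial\Omega_\varepsilon\) holds because \(W_\varepsilon\equiv 1\) there (Lemma~\ref{lemmaBarrierFast}(iii)) and \(u_\varepsilon\le M\) by the previous step; finally \(u_\varepsilon\in H^1_V(\Omega_\varepsilon)\) and \(W_\varepsilon\in H^1_\mathrm{loc}(\overline{\Omega_\varepsilon})\) with \(W_\varepsilon\ge 0\) from the explicit construction (Lemma~\ref{lemmaBarrierFast}(ii)--(iv)). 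The comparison principle then yields \(u_\varepsilon\le MW_\varepsilon\) on \(\Omega_\varepsilon\), and combining with the pointwise decay in Lemma~\ref{lemmaBarrierFast}(iv) gives the claimed bound outside \(B(x_\varepsilon,\varepsilon R)\). Inside the ball the estimate is trivial after enlarging \(C\), since \(u_\varepsilon\) is uniformly bounded there and the right-hand side is bounded below by a positive constant depending only on \(R\).

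The main obstacle I expect is the uniform \(L^\infty\) bound on \(\partial B(x_\varepsilon,\varepsilon R)\); this is the only step that does not follow by direct quotation from a preceding lemma, but it is a routine consequence of the compactness of rescaled solutions in Lemma~\ref{lemuepsrescaledcompact} together with the energy bound provided by the minimax characterization. Once this is in place, the proof is a direct application of the comparison principle against the barrier.
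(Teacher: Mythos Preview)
Your proposal is correct and follows exactly the approach the paper takes: the paper's proof is a one-line reference to Lemmas~\ref{lemmaIneqSmallBalls}, \ref{lemmaBarrierFast}, and \ref{lemmaComparison}, and you have simply spelled out the details of how these combine, including the uniform boundary bound via the compactness of rescaled solutions, which the paper leaves implicit.
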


\begin{proof}
This is a consequence of Lemmas~\ref{lemmaIneqSmallBalls} and \ref{lemmaBarrierFast} together with the comparison principle (Lemma~\ref{lemmaComparison}).
\end{proof}

We can now go back to the original problem.

\begin{proposition}
\label{propOriginalFast}
Let \((u_{\varepsilon})_{\varepsilon>0}\) be a family of positive solutions of \eqref{problemPNLSE} at the level \(c_{\varepsilon}\). 
If
\(
 \frac{1}{p} < \frac{N-2}{N},
\)
then there exists \(\varepsilon_0 > 0\) such that, for every \(0 < \varepsilon < \varepsilon_0\), \(u_{\varepsilon}\) solves the original
problem \eqref{problemNLSE}.
\end{proposition}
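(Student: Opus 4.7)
The plan is to show that for \(\varepsilon\) small enough,
\[
 u_\varepsilon(x)^{p-1} \leq \mu\bigl(\varepsilon^2 H(x) + V(x)\bigr) \quad \text{for every } x \in \R^N \setminus \Lambda;
\]
once this is established, the definition of \(g_\varepsilon\) yields \(g_\varepsilon(x, u_\varepsilon(x)) = u_\varepsilon(x)^p\) on \(\R^N \setminus \Lambda\) (and trivially on \(\Lambda\)), so \(u_\varepsilon\) solves \eqref{problemNLSE}.

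First I pick \(x_\varepsilon \in \overline{\Lambda}\) realizing \(\sup_{\overline{\Lambda}} u_\varepsilon\). Lemma~\ref{lem:1} together with \(\inf_\Lambda V > 0\) (implicit in the hypothesis \(\sup_\Lambda V^{\frac{p+1}{p-1}-\frac{N}{2}} < 2 \inf_\Lambda V^{\frac{p+1}{p-1}-\frac{N}{2}}\) of Theorem~\ref{theoremMainLambda}) yields \(\liminf_{\varepsilon \to 0} u_\varepsilon(x_\varepsilon) > 0\). Via \eqref{estim:by} the hypotheses on \(V\) translate to those required by Lemma~\ref{lem:unifdecay} on \(\mathcal{C}\), producing \(d := \liminf_{\varepsilon \to 0} \dist(x_\varepsilon, \R^N \setminus \Lambda) > 0\). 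For \(\varepsilon\) small and \(x \in \R^N \setminus \Lambda\) we thus have \(\abs{x - x_\varepsilon} \geq d/2\); since \(t \mapsto t/(1+t)\) is increasing, Proposition~\ref{propositionDecayFast} then gives a uniform estimate
\[
 u_\varepsilon(x)^{p-1} \leq C\, e^{-c/\varepsilon}\bigl(1 + \abs{x}^2\bigr)^{-\frac{(N-2)(p-1)}{2}}
\]
for every \(x \in \R^N \setminus \Lambda\), with \(c > 0\) depending only on \(d\), \(\lambda\), and \(p\).

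Next I compare \(F(x) := (1+\abs{x}^2)^{-(N-2)(p-1)/2}\) with \(H(x)\). Pick \(R_0 > 0\) so large that \(\overline{\Lambda} \subset B(0, R_0)\). On the compact set \(\overline{B(0, R_0)} \setminus \Lambda\), \(H\) is continuous and strictly positive, so \(F/H\) is bounded. On \(\R^N \setminus B(0, R_0)\), the definition of \(H\) yields
\[
 H(x) \geq \frac{c_1}{\abs{x-x_0}^2 \bigl(\log(\abs{x-x_0}/\rho_0)\bigr)^{1+\beta}},
\]
so \(F/H\) is controlled by a constant times \(\abs{x}^{2 - (N-2)(p-1)} (\log \abs{x})^{1+\beta}\). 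The hypothesis \(\frac{1}{p} < \frac{N-2}{N}\), equivalent to \((N-2)(p-1) > 2\), makes this quantity bounded on \(\R^N \setminus B(0, R_0)\). There is thus \(M > 0\) with \(F \leq M\,H\) on \(\R^N \setminus \Lambda\), so
\[
 u_\varepsilon(x)^{p-1} \leq C M \,e^{-c/\varepsilon}\, H(x) \leq \mu \varepsilon^2 H(x) \leq \mu\bigl(\varepsilon^2 H(x) + V(x)\bigr)
\]
for \(\varepsilon\) small, since \(e^{-c/\varepsilon}/\varepsilon^2 \to 0\).

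The main obstacle is precisely this algebraic comparison at infinity: the Hardy-type penalization \(H\) decays only like \(\abs{x}^{-2}\) (up to log factors), and the algebraic factor in Proposition~\ref{propositionDecayFast} must strictly outrun this decay. The subcriticality condition \((N-2)(p-1) > 2\) is exactly what produces the constant \(M\), and it is consistent with the optimality statement in Theorem~\ref{theoremMainSmooth} that without it no compactly supported \(V\) could yield a solution, so no slack in the assumption on \(p\) can be expected.
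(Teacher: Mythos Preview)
Your proof is correct and follows essentially the same route as the paper: obtain a concentration point via Lemma~\ref{lem:1}, use Lemma~\ref{lem:unifdecay} to get it uniformly inside \(\Lambda\), apply Proposition~\ref{propositionDecayFast}, and then check that on \(\R^N\setminus\Lambda\) the resulting bound on \(u_\varepsilon^{p-1}\) is dominated by \(\mu\varepsilon^2 H\). The paper compresses the last step into a single displayed inequality, whereas you spell out the comparison of \((1+\abs{x}^2)^{-(N-2)(p-1)/2}\) with \(H\) on a compact annulus and at infinity, making explicit that \((N-2)(p-1)>2\) is exactly the algebraic condition needed to beat the \(\abs{x}^{-2}(\log\abs{x})^{-1-\beta}\) decay of \(H\).
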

\begin{proof}
The proof follows the lines of \cite{MorozVanSchaftingen2010}*{Proposition 5.4}. By Lemma \ref{lem:1},
there exists a family of points \((x_{\varepsilon})_{\varepsilon>0} \subset \Lambda\) such that
\[
 \liminf_{\varepsilon\to 0} u_{\varepsilon}(x_{\varepsilon}) > 0.
\]
By Lemma \ref{lem:unifdecay}, \(d_0 := \inf d(x_{\varepsilon}, \partial \Lambda) > 0\). Hence, by
proposition~\ref{propositionDecayFast}, we have for \(\varepsilon > 0\) small enough and for \(x \in \R^N\setminus\Lambda\),
\begin{equation*}
\begin{split}
 \left(u_{\varepsilon}(x)\right)^{p-1} &\leq \Bigl( C \exp \Bigl( -\frac{\lambda}{\varepsilon} \frac{\abs{x-x_{\varepsilon}}}{1+\abs{x-x_{\varepsilon}}} \Bigr) 
\bigl( 1+\abs{x}^2 \bigr)^{-\frac{N-2}{2}} \Bigr)^{p-1} \\
&\leq \mu \varepsilon^2 \frac{(N-2)^2}{4\abs{x-x_0}^2} \Bigl(\frac{\log \frac{\rho}{\rho_0}}{\log \frac{\abs{x-x_0}}{\rho_0}}
\Bigr)^{1+\beta}
= \mu \varepsilon^2 H(x).
\end{split}
\end{equation*}
By definition of the penalized nonlinearity \(g_{\varepsilon}\), one has then 
\begin{equation*}
g_{\varepsilon}\left( x, u_{\varepsilon}(x) \right) = \left( u_{\varepsilon}(x) \right)^p, \qquad x \in \R^N\setminus\Lambda,
\end{equation*}
and therefore \(u_{\varepsilon}\) solves the original problem \eqref{problemNLSE}.
\end{proof}

\subsubsection{The case of slow decaying potentials}

Now we assume that \(\liminf_{\abs{x} \to \infty} V(x) \abs{x}^2\). We first have a counterpart of Lemma~\ref{lemmaBarrierFast}.

\begin{lemma}
Let \((x_{\varepsilon})_{\varepsilon} \subset \Lambda\) be such that \(\liminf_{\varepsilon\to 0} d(x_{\varepsilon},\partial \Lambda) > 0\), let \(\mu \in (0,1)\) and let \(R>0\).
If 
\[
  \liminf_{\abs{x} \to \infty} V(x) \abs{x}^2 > 0,
\]
then, there exists \(\varepsilon_0 > 0\) and a family of functions \((W_{\varepsilon})_{0<\varepsilon<\varepsilon_0}\) in \(C^{1,1}(\R^N\setminus B(x_{\varepsilon},\varepsilon R))\)
such that, for \(\varepsilon \in (0,\varepsilon_0)\),
\begin{enumerate}[(i)]
\item \(W_{\varepsilon}\) satisfies the inequation
\[
 -\varepsilon^2 (\Delta + \mu H) W_{\varepsilon} + (1-\mu) V W_{\varepsilon} \geq 0 \hspace{0.5cm} \text{in} \ \R^N\setminus
 B(x_{\varepsilon},\varepsilon R),
\]
\item \(\nabla W_{\varepsilon} \in L^2(\R^N\setminus B(x_{\varepsilon},\varepsilon R))\),
\item \(W_{\varepsilon} = 1\) on \(\partial B(x_{\varepsilon},\varepsilon R)\),
\item there exist \(C, \lambda, \nu > 0\) such that for every \(x \in \R^N\setminus B(x_{\varepsilon},\varepsilon R)\),
\[
 W_{\varepsilon}(x) \leq C \exp \Bigl( -\frac{\lambda}{\varepsilon} \frac{\abs{x-x_{\varepsilon}}}{1+\abs{x-x_{\varepsilon}}} \Bigr) \bigl( 1+\abs{x}^2 \bigr)^{-\frac{\nu}{\epsilon}}.
\]
\end{enumerate}
\end{lemma}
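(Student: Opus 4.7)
The plan is to adapt the fast-decay construction (Lemma~\ref{lemmaBarrierFast}, following \cite{MorozVanSchaftingen2010}*{Lemma 5.2}) by replacing the fundamental-solution factor $(1+\abs{x}^2)^{-(N-2)/2}$ with a much faster-decaying power $(1+\abs{x-x_\varepsilon}^2)^{-\nu/\varepsilon}$ whose exponent grows like $1/\varepsilon$. Setting $c_0 := \liminf_{\abs{x}\to\infty} V(x)\abs{x}^2 > 0$ and fixing $R_0 > 0$ such that $V(x) \ge c_0/(2\abs{x}^2)$ for $\abs{x}\ge R_0$, I would try
\[
 W_\varepsilon(x) := A_\varepsilon \exp\Bigl(-\frac{\lambda}{\varepsilon}\frac{\abs{x-x_\varepsilon}}{1+\abs{x-x_\varepsilon}}\Bigr)\bigl(1+\abs{x-x_\varepsilon}^2\bigr)^{-\nu/\varepsilon},
\]
where $\lambda,\nu > 0$ are small parameters chosen independently of $\varepsilon$ and $A_\varepsilon$ is a normalization making $W_\varepsilon=1$ on $\partial B(x_\varepsilon,\varepsilon R)$. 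Using $\abs{x-x_\varepsilon}$ rather than $\abs{x}$ in the polynomial factor makes $W_\varepsilon$ exactly radial about $x_\varepsilon$, affecting only the constant in (iv) since $x_\varepsilon$ stays in the bounded set $\Lambda$.

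To verify (i), I would write $W_\varepsilon = A_\varepsilon \Phi_\varepsilon \Psi_\varepsilon$ and expand $-\varepsilon^2\Delta W_\varepsilon/W_\varepsilon$ via the product-rule identity $-\varepsilon^2\Delta\log W_\varepsilon - \varepsilon^2\abs{\nabla\log W_\varepsilon}^2$. Elementary calculus gives $-\varepsilon^2\Delta\Phi_\varepsilon/\Phi_\varepsilon = O(\lambda)$ uniformly; with $r := \abs{x-x_\varepsilon}$, one has $-\varepsilon^2 \Delta\Psi_\varepsilon/\Psi_\varepsilon = -4\nu^2\, r^2/(1+r^2)^2 + O(\varepsilon\nu)$, which is $O(\nu^2)$ globally and asymptotic to $-4\nu^2/r^2$ at infinity; and the cross term $-2\varepsilon^2\nabla\log\Phi_\varepsilon\cdot\nabla\log\Psi_\varepsilon$ is $O(\lambda\nu)$. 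On the positive side, $(1-\mu)V \ge (1-\mu)c_0/(2\abs{x}^2)$ for $\abs{x}\ge R_0$ absorbs the leading $-4\nu^2/r^2$ provided $\nu^2 < (1-\mu)c_0/8$; on the bounded annulus $\{\varepsilon R \le r,\ \abs{x}\le R_0\}$, $(1-\mu)V$ has a positive lower bound that absorbs the (bounded) Laplacian contributions when $\lambda$ is small. The penalization contribution $\varepsilon^2 \mu H(x) = O(\varepsilon^2/\abs{x-x_0}^2)$ is negligible.

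Properties (ii)--(iv) are then routine. The pointwise estimate $\abs{\nabla W_\varepsilon} \le C(\lambda+\nu)\varepsilon^{-1}\, W_\varepsilon$, combined with the rapid polynomial decay $W_\varepsilon(x) \le C(1+r^2)^{-\nu/\varepsilon}$, yields $\nabla W_\varepsilon \in L^2(\R^N\setminus B(x_\varepsilon,\varepsilon R))$ as soon as $4\nu/\varepsilon > N$, which holds for $\varepsilon$ small. The uniform bound $A_\varepsilon \le C$ follows from $\varepsilon R \to 0$, and (iv) comes directly from the explicit form of $W_\varepsilon$ together with the inequality $1+\abs{x}^2 \le C(1+r^2)$ valid on $\R^N$.

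The main obstacle is the balancing in the supersolution inequality (i), especially in the transition annulus $\{r\sim 1\}$ where neither the exponential nor the polynomial factor is in its asymptotic regime. The saturating phase $r/(1+r)$ is essential: it ensures $\abs{\nabla\log\Phi_\varepsilon} = O(\varepsilon^{-1}(1+r)^{-2})$, so that both the $\Phi_\varepsilon$-Laplacian term and the cross term decay quadratically at infinity and can be controlled by the $V$-budget there. A further subtlety would arise if the continuous potential $V$ happened to vanish at some bounded point outside $\Lambda$, because the slow polynomial decay $(1+r^2)^{-\nu/\varepsilon}$ is not supersolution-compatible with $-\Delta - \mu H$ alone; this is remedied by borrowing from Lemma~\ref{lemmaBarrierFast} and multiplying by an auxiliary Hardy-type factor active only in the bounded region, which restores superharmonicity there via the $H$ term where $V$ is insufficient. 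Once these are in place, the remaining estimates are direct.
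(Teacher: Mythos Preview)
The paper itself does not prove this lemma; it simply cites the discussion after Theorem~7 in \cite{MorozVanSchaftingen2010}. Your ansatz has the correct asymptotic shape --- the saturating exponential times a power \((1+r^2)^{-\nu/\varepsilon}\) with \(\varepsilon\)-dependent exponent is exactly what produces the estimate (iv) --- and your balancing at infinity (choosing \(\nu^2\) small against \((1-\mu)c_0\)) is right.

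There is, however, a real gap in the intermediate region. Outside \(\Lambda\) but inside \(\{\abs{x}\le R_0\}\), the hypotheses do not guarantee \(V>0\), so the sentence ``\((1-\mu)V\) has a positive lower bound on the bounded annulus'' is not justified. In that region your computation gives
\[
-\varepsilon^2\frac{\Delta W_\varepsilon}{W_\varepsilon}
= -\frac{\lambda^2}{(1+r)^4}-\frac{4\nu^2 r^2}{(1+r^2)^2} + O(\varepsilon\lambda+\varepsilon\nu+\lambda\nu),
\]
which is of order \(-(\lambda^2+\nu^2)\), not \(O(\varepsilon^2)\). With \(V=0\) there is nothing to absorb it. Your proposed fix (``an auxiliary Hardy-type factor \dots\ via the \(H\) term'') cannot work as stated: \(H\) enters the operator as \(-\varepsilon^2\mu H\), so it is a \emph{bad} term you must dominate, not a source of positivity; and any genuine Hardy-type factor contributes only \(O(1/\abs{x}^2)\) to \(-\Delta/\cdot\), which is bounded on the annulus and still cannot beat a fixed \(O(\nu^2)\).

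The standard remedy (and the one implicit in \cite{MorozVanSchaftingen2010}) is to build \(W_\varepsilon\) piecewise rather than as a single global product. On a fixed bounded set containing \(\{\abs{x}\le R_0\}\), take a \emph{non-saturating} exponential \(\exp(-\lambda r/\varepsilon)\): for \(r\) bounded away from \(0\) one gets \(-\varepsilon^2\Delta/\cdot = \lambda^2 - \varepsilon\lambda(N-1)/r \ge \lambda^2/2\) for \(\varepsilon\) small, which dominates both \(\varepsilon^2\mu H\) and any cross terms regardless of \(V\); near \(x_\varepsilon\) the term \(-\varepsilon\lambda(N-1)/r\) is handled by \(V\ge\inf_\Lambda V>0\). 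Outside that bounded set, switch to the power tail \((1+r^2)^{-\nu/\varepsilon}\), where your infinity balance applies. The two pieces are then matched in \(C^{1,1}\) (or one takes a suitable minimum). Once this patching is carried out, your verification of (ii)--(iv) goes through as you describe.
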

\begin{proof}
See the discussion after \cite{MorozVanSchaftingen2010}*{Theorem 7}.
\end{proof}

As a consequence we have
\begin{proposition}
\label{propositionDecaySlow}
Let \((u_{\varepsilon})_{\varepsilon>0}\) be a family of positive solutions of \eqref{problemPNLSE} at the level \(c_{\varepsilon}\)
and let \((x_{\varepsilon})_{\varepsilon>0} \subset \Lambda\) be such that
\[
 \liminf_{\varepsilon\to 0} u_{\varepsilon}(x_{\varepsilon}) > 0\:.
\]
If 
\[
  \liminf_{\abs{x} \to \infty} V(x) \abs{x}^2 > 0,
\] 
then there exists \(C,\lambda, \nu > 0\) and \(\varepsilon_0 > 0\) and \(R>0\) such that for all \(\varepsilon \in (0,\varepsilon_0)\),
\begin{equation*}
  u_{\varepsilon}(x) \leq C \exp \left( -\frac{\lambda}{\varepsilon} \frac{\abs{x-x_{\varepsilon}}}{1+\abs{x-x_{\varepsilon}}} \right) 
\bigl( 1+\abs{x}^2 \bigr)^{-\frac{\nu}{\epsilon}},
 \qquad x \in \R^N.
\end{equation*}
\end{proposition}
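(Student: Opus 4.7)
The plan is to mirror the proof of Proposition~\ref{propositionDecayFast} using the slow-decay barrier from the lemma just above. The overall strategy is a subsolution/supersolution comparison argument for the linear elliptic operator $-\varepsilon^2(\Delta + \mu H) + (1-\mu) V$.

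First, I would invoke Lemma~\ref{lemmaIneqSmallBalls} to obtain $\varepsilon_0 > 0$ and $R > 0$ such that, for every $\varepsilon \in (0, \varepsilon_0)$,
\[
 -\varepsilon^2 (\Delta + \mu H) u_{\varepsilon} + (1-\mu) V u_{\varepsilon} \leq 0 \qquad \text{in } \R^N \setminus B(x_{\varepsilon}, \varepsilon R).
\]
This is exactly where the hypothesis $\liminf_{\varepsilon \to 0} u_\varepsilon(x_\varepsilon) > 0$ is used, via the hidden appeal to Lemma~\ref{lem:unifdecay} inside Lemma~\ref{lemmaIneqSmallBalls}; the latter also gives $\liminf_{\varepsilon \to 0} \dist(x_\varepsilon, \partial \Lambda) > 0$, which is needed as an input to the barrier construction.

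Next, I would apply the preceding lemma (the slow-decay version of the barrier) to produce a family $(W_\varepsilon)_{0 < \varepsilon < \varepsilon_0}$ of supersolutions of the same operator outside $B(x_\varepsilon, \varepsilon R)$, with $W_\varepsilon = 1$ on $\partial B(x_\varepsilon, \varepsilon R)$ and the decay estimate
\[
 W_{\varepsilon}(x) \leq C \exp \Bigl( -\frac{\lambda}{\varepsilon} \frac{\abs{x-x_{\varepsilon}}}{1+\abs{x-x_{\varepsilon}}} \Bigr) \bigl( 1+\abs{x}^2 \bigr)^{-\frac{\nu}{\varepsilon}}.
\]
Since $u_\varepsilon$ is continuous on the compact set $\overline{B(x_\varepsilon, \varepsilon R)}$, it is bounded there by some constant $M > 0$ independent of $\varepsilon$ (this uses the uniform $L^\infty$ bounds on rescaled solutions coming from Lemma~\ref{lemuepsrescaledcompact}, or a direct elliptic regularity bound via the energy estimate in Lemma~\ref{lem:bounded}). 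Thus $u_\varepsilon \le M W_\varepsilon$ on $\partial B(x_\varepsilon, \varepsilon R)$.

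Finally, I would apply the comparison principle Lemma~\ref{lemmaComparison} with $v = u_\varepsilon$, $w = M W_\varepsilon$, and $\Omega = \R^N \setminus \overline{B(x_\varepsilon, \varepsilon R)}$. Both sides satisfy the opposite differential inequalities, the boundary inequality holds by construction, and $W_\varepsilon \ge 0$ together with $\nabla W_\varepsilon \in L^2$ give the required integrability. The conclusion $u_\varepsilon \le M W_\varepsilon$ in $\Omega$ then yields the desired pointwise bound after absorbing $M$ into $C$. Inside $B(x_\varepsilon, \varepsilon R)$ the stated estimate is trivial by enlarging $C$, since the exponential and polynomial factors are bounded below there. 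The main subtle point, as usual in this framework, is not any single step but verifying that all three ingredients — the barrier, the uniform a priori bound on $\partial B(x_\varepsilon, \varepsilon R)$, and the applicability of the comparison principle — are uniform in $\varepsilon$; all of these were already prepared in the preceding sections, so the proof reduces to citing them.
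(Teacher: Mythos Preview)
Your proposal is correct and follows the same approach as the paper: the paper states the proposition as a direct consequence of the preceding barrier lemma, and its proof (by analogy with Proposition~\ref{propositionDecayFast}) is precisely the combination of Lemma~\ref{lemmaIneqSmallBalls}, the slow-decay barrier, and the comparison principle of Lemma~\ref{lemmaComparison} that you describe. Your additional remarks on the uniform $L^\infty$ bound on $\partial B(x_\varepsilon,\varepsilon R)$ and on the ingredients needed for Lemma~\ref{lemmaComparison} are accurate elaborations of points the paper leaves implicit.
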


This allows us to go back to our original problem.

\begin{proposition}
\label{propOriginalSlow}
 Let \((u_{\varepsilon})_{\varepsilon>0}\) be a family of positive solutions of \eqref{problemPNLSE} at the level \(c_{\varepsilon}\). 
If
\begin{equation*}
 \liminf_{\abs{x} \to \infty} V(x) \abs{x}^2 > 0,
\end{equation*}
then there exists \(\varepsilon_0 > 0\) such that, for every \(0 < \varepsilon < \varepsilon_0\), \(u_{\varepsilon}\) solves the original
problem \eqref{problemNLSE}.
\end{proposition}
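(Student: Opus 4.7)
The strategy is to parallel Proposition~\ref{propOriginalFast}, with the slow-decay barrier replacing the fast-decay one. First, by Lemma~\ref{lem:1}, we extract a family $(x_\varepsilon)_{\varepsilon > 0} \subset \Lambda$ with $\liminf_{\varepsilon \to 0} u_\varepsilon(x_\varepsilon) > 0$; Lemma~\ref{lem:unifdecay} then yields $d_0 := \liminf_{\varepsilon \to 0} \dist(x_\varepsilon, \R^N \setminus \Lambda) > 0$, so Proposition~\ref{propositionDecaySlow} applies and furnishes constants $C, \lambda, \nu > 0$ and $\varepsilon_0 > 0$ such that
\[
 u_\varepsilon(x) \le C \exp\Bigl(-\frac{\lambda}{\varepsilon}\frac{\abs{x-x_\varepsilon}}{1+\abs{x-x_\varepsilon}}\Bigr)\bigl(1+\abs{x}^2\bigr)^{-\nu/\varepsilon}
\]
for every $x \in \R^N$ and $\varepsilon \in (0,\varepsilon_0)$. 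Since $u_\varepsilon$ already solves the penalized equation \eqref{problemPNLSE}, it will be a solution of \eqref{problemNLSE} as soon as $g_\varepsilon(x, u_\varepsilon(x)) = u_\varepsilon(x)^p$ for every $x \in \R^N \setminus \Lambda$, which by definition of the penalized nonlinearity reduces to the pointwise comparison
\[
 u_\varepsilon(x)^{p-1} \le \mu\bigl(\varepsilon^2 H(x) + V(x)\bigr) \qquad \text{for every } x \in \R^N \setminus \Lambda.
\]

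To verify this inequality I would split $\R^N \setminus \Lambda$ at a radius $R_1$ chosen large enough that the hypothesis $\liminf_{\abs{x}\to\infty} V(x)\abs{x}^2 > 0$ yields $V(x) \ge c_0\abs{x}^{-2}$ for every $\abs{x} \ge R_1$. On the bounded part $(\R^N \setminus \Lambda) \cap B(0,R_1)$ the distance $\abs{x-x_\varepsilon}$ stays above $d_0/2$ uniformly in $\varepsilon$ small, so the exponential prefactor in the decay estimate forces $u_\varepsilon(x)^{p-1} = O(e^{-c/\varepsilon})$; meanwhile, since $\abs{x-x_0} \ge \rho$ and $\abs{x}$ is bounded on this set, the penalization potential $H$ is bounded below by a positive constant, so $\mu\varepsilon^2 H(x) \ge c'\varepsilon^2$, and the comparison holds for $\varepsilon$ sufficiently small because an exponential in $-1/\varepsilon$ beats $\varepsilon^2$. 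On the tail $\abs{x} \ge R_1$ I would instead compare
\[
 u_\varepsilon(x)^{p-1} \le C^{p-1}\bigl(1+\abs{x}^2\bigr)^{-\nu(p-1)/\varepsilon}
 \quad\text{with}\quad
 \mu V(x) \ge \mu c_0 \abs{x}^{-2};
\]
once $\varepsilon$ is small enough that $\nu(p-1)/\varepsilon > 1$, the quantity $\abs{x}^2(1+\abs{x}^2)^{-\nu(p-1)/\varepsilon}$ is nonincreasing in $\abs{x}$ and is therefore controlled by its value at $\abs{x} = R_1$, namely $(1+R_1^2)^{1-\nu(p-1)/\varepsilon}$, which tends to $0$ as $\varepsilon \to 0$.

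I do not expect any genuine obstacle beyond bookkeeping: the conceptual point that plays the role of the hypothesis $\frac{1}{p} < \frac{N-2}{N}$ in Proposition~\ref{propOriginalFast} is the divergence of the decay exponent $\nu/\varepsilon$ of $u_\varepsilon$ as $\varepsilon \to 0$, which makes the tail of $u_\varepsilon^{p-1}$ dominated by the polynomial lower bound on $V$ regardless of how close $p-1$ is to zero. The only technical care needed is in choosing the splitting radius $R_1$ so that both regimes mesh uniformly in $\varepsilon$, which the decay estimates above arrange automatically.
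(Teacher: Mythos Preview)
Your proposal is correct and follows essentially the same route as the paper: extract the peak via Lemma~\ref{lem:1} and Lemma~\ref{lem:unifdecay}, invoke the slow-decay estimate of Proposition~\ref{propositionDecaySlow}, and then verify the pointwise inequality that collapses \(g_\varepsilon\) to the pure power. The only organizational difference is that the paper compares \(u_\varepsilon^{p-1}\) directly against \(\mu\varepsilon^2 H\) on all of \(\R^N\setminus\Lambda\), whereas you split at a large radius and use \(\mu\varepsilon^2 H\) on the bounded part and \(\mu V\) on the tail; since the penalization threshold is \(\mu(\varepsilon^2 H + V)\), either lower bound suffices, and your version has the virtue of making the role of the hypothesis \(\liminf_{\abs{x}\to\infty} V(x)\abs{x}^2 > 0\) explicit in the tail estimate.
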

\begin{proof}
The proof begins as the proof of Proposition~\ref{propOriginalFast}. Applying
proposition~\ref{propositionDecaySlow}, we have for \(\varepsilon > 0\) small enough and for \(x \in \R^N\setminus\Lambda\),
\begin{equation*}
\begin{split}
 \bigl(u_{\varepsilon}(x)\bigr)^{p-1} &\leq \Bigl( C \exp \Bigl( -\frac{\lambda}{\varepsilon} \frac{\abs{x-x_{\varepsilon}}}{1+\abs{x-x_{\varepsilon}}} \Bigr) 
\bigl( 1+\abs{x}^2 \bigr)^{-\frac{\nu}{\epsilon}} \Bigr)^{p-1} \\
&\leq \mu \varepsilon^2 \frac{(N-2)^2}{4\abs{x-x_0}^2 \bigl(\log \frac{\abs{x-x_0}}{\rho_0}
\bigr)^{1+\beta}}
= \mu \varepsilon^2 H(x)\:,
\end{split}
\end{equation*}
and therefore \(u_{\varepsilon}\) solves the original problem \eqref{problemNLSE}.
\end{proof}

\subsubsection{Proof of the main theorem}

Finally we complete the proof of the main theorem.

\begin{proof}[Proof of Theorem \ref{theoremMainLambda}] 
Let \(u_\varepsilon\) be the solution of the penalized problem \eqref{problemPNLSE} from Proposition~\ref{prop:Minimizer}. By Lemma \ref{lem:1}, there exists \((x_\varepsilon)_{\varepsilon > 0}\) such that 
\[
 \liminf_{\varepsilon \to 0} u_\varepsilon(x_\varepsilon) > 0.
\]
By Lemma \ref{lem:unifdecay},
\[
  \liminf_{\varepsilon\to 0} d(x_{\varepsilon}, \R^N \setminus \Lambda) > 0\:.
\]
By Proposition~\ref{propOriginalFast} or \ref{propOriginalSlow},
\(u_\varepsilon\) solves \eqref{problemNLSE} for \(\varepsilon\) small enough.
\end{proof}

\section{The low-dimensional case}
\label{sectionSlow}
In the case \(N \le 2\), we do not have the Hardy inequality, but since one cannot have \(\frac{1}{p} < \frac{N-2}{N}\), we can use some information about the decay of \(V\).

We define the penalization
potential \(H : \R^N \to \R\) by
\begin{equation*}
 H(x) := \bigl(1-\chi_{\Lambda}(x)\bigr) \frac{1}{\abs{x-x_0}^{2 + \beta}}
\end{equation*}
for some \(\beta > 0\).

In place of Lemma~\ref{lemmaPositivity}, we now have

\begin{lemma}
For every \(u \in C^{\infty}_c(\R^N)\), 
\begin{equation*}
 \int_{\R^N} \bigl( \abs{\nabla u}^2 - H \abs{u}^2   \bigr) \geq - C \int_{\R^N} V \abs{u}^2\:.
\end{equation*}
\end{lemma}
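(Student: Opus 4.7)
The plan is to rewrite the desired inequality as $\int H|u|^2 \le \int |\nabla u|^2 + C\int V|u|^2$ and bound the penalization integral by the full energy uniformly in $u \in C^\infty_c(\R^N)$. I would decompose $\R^N = B(0,R) \cup (\R^N \setminus B(0,R))$ for a suitably large $R$ and estimate each piece separately, exploiting the decay hypothesis $\liminf_{|x| \to \infty} V(x)|x|^2 > 0$ that is standing in the low-dimensional regime.

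For the exterior piece I would choose $R$ so large that both $V(x) \ge c|x|^{-2}$ and $|x|^{\beta} \ge 1$ hold on $\R^N \setminus B(0,R)$. Since $H(x) \le C|x|^{-(2+\beta)}$ there, this yields
\[
 H(x) \le \frac{C}{|x|^{2+\beta}} \le \frac{C}{|x|^{2}} \le \frac{C}{c}\,V(x) \qquad \text{on } \R^N \setminus B(0,R),
\]
so that $\int_{\R^N \setminus B(0,R)} H|u|^2 \le C' \int V|u|^2$.

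For the interior piece I would observe that $H \equiv 0$ on $\Lambda$ and $|x - x_0| \ge \dist(x_0,\partial\Lambda) > 0$ on $\R^N \setminus \Lambda$, so $H$ is uniformly bounded by some $M_R$ on $B(0,R)$. Taking $R$ large enough that $\overline{\Lambda} \subset B(0,R)$, the remaining task is to bound $\int_{B(0,R)} |u|^2$ by the right-hand side. For this I would invoke a Poincaré-with-anchor inequality on the connected bounded domain $B(0,R+1)$:
\[
 \int_{B(0,R+1)} |v|^2 \le C_{R,\Lambda}\,\Bigl( \int_{B(0,R+1)} |\nabla v|^2 + \int_{\Lambda} |v|^2 \Bigr) \qquad \forall\, v \in H^1(B(0,R+1)).
\]
The anchor integral $\int_\Lambda |u|^2$ is then majorized by $(\inf_\Lambda V)^{-1} \int V|u|^2$ using the standing hypothesis $\inf_\Lambda V > 0$, which closes the estimate and yields the claim with a uniform $C$.

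The main obstacle is justifying the Poincaré-with-anchor inequality. It is a standard consequence of Rellich--Kondrachov compactness on $B(0,R+1)$: a normalized sequence defeating the inequality would converge in $L^2$, along a subsequence, to a constant that vanishes on $\Lambda$; positive measure of $\Lambda$ forces the constant to be zero, contradicting normalization. Once this soft compactness argument is in place, the rest is bookkeeping between the explicit form of $H$ and the decay assumption on $V$, considerably gentler than the Hardy-based argument of Lemma~\ref{lemmaPositivity} used in the higher-dimensional regime.
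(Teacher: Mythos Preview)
Your exterior estimate is exactly the paper's: the decay hypothesis \(\liminf_{|x|\to\infty}V(x)|x|^2>0\) and the extra \(|x|^{-\beta}\) in \(H\) give \(H\le C'V\) outside a large ball, so \(\int_{\R^N\setminus B_R}H|u|^2\le C'\int V|u|^2\).

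The difficulty is the interior part, and here your Poincar\'e--with--anchor argument has a genuine gap. Rewriting the lemma as \(\int H|u|^2\le \int|\nabla u|^2+C\int V|u|^2\), the coefficient in front of \(\int|\nabla u|^2\) must be \emph{exactly} \(1\). Your bound gives
\[
\int_{B_R}H|u|^2\le M_R\int_{B_R}|u|^2\le M_R\,C_{R,\Lambda}\Bigl(\int_{\R^N}|\nabla u|^2+\tfrac{1}{\inf_\Lambda V}\int_{\R^N}V|u|^2\Bigr),
\]
and there is no reason for \(M_RC_{R,\Lambda}\le 1\): the anchor constant \(C_{R,\Lambda}\) blows up with \(R\) (test with \(u\equiv 1\) on \(B_{R+1}\) to see \(C_{R,\Lambda}\gtrsim R^{N}\)), while \(R\) is forced to be large by the exterior step. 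If \(M_RC_{R,\Lambda}>1\), the resulting inequality \(\int|\nabla u|^2-\int H|u|^2\ge (1-M_RC_{R,\Lambda})\int|\nabla u|^2-C''\int V|u|^2\) is useless, since \(\int|\nabla u|^2\) cannot in general be dominated by \(\int V|u|^2\).

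The paper avoids this by replacing your anchor with a \emph{cutoff}: it multiplies \(u\) by \(\varphi\in C^\infty_c(B_{2R})\) with \(\varphi=1\) on \(B_R\), so that \(\varphi u\in H^1_0(B_{2R})\), and then bounds \(\int H|\varphi u|^2\) by a Poincar\'e/Sobolev term \(\tfrac12\int|\nabla(\varphi u)|^2\). Expanding \(\tfrac12|\nabla(\varphi u)|^2\le\varphi^2|\nabla u|^2+|\nabla\varphi|^2|u|^2\) and using \(|\varphi|\le1\) keeps the coefficient of \(\int|\nabla u|^2\) at \(1\), and the cross term \(\int|\nabla\varphi|^2|u|^2\) lives on the annulus \(B_{2R}\setminus B_R\), where the decay hypothesis itself gives \(V\ge c/(2R)^2\) and hence \(\int|\nabla\varphi|^2|u|^2\le C'''\int V|u|^2\). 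The point is that the paper anchors on the annulus (where the decay assumption controls \(V\) from below), not on \(\Lambda\); this is precisely what lets the gradient coefficient stay at \(1\).
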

\begin{proof}
Since \(\liminf_{\abs{x} \to \infty} V(x) \abs{x}^2 > 0\), there exists \(R > 0\) such that if \(x \in R^N \setminus B_R\), \(H(x) \ge V(x)\). One has 
\[
  \int_{\R^N \setminus B_R} H \abs{u}^2 \leq \int_{\R^N \setminus B_R} V \abs{u}^2\:.
\]
Taking \(\varphi \in C^\infty (\R^N)\) such that \(\varphi \ge 0\), \(\supp \varphi \subset B_{2R}\) and \(\varphi = 1\) on \(B_R\), by the Sobolev inequality,
\[
\begin{split}
 \int_{B_R} H \abs{u}^2  \le  \int_{\R^N} H \abs{ \varphi u}^2  & \leq C \int_{\R^N} V \abs{(1-\varphi)u}^2 + \frac{1}{2}\int_{\R^N} \abs{\nabla (\varphi u) }^2 \\
 &\le C \int_{\R^N} V \abs{(1-\varphi)u}^2 + \int_{\R^N} \bigl( \abs{\varphi}^2\abs{\nabla u}^2 + \abs{\nabla \varphi}^2 \abs{u}^2\bigr).
\end{split}
\]
from which the conclusion follows.
\end{proof}

The proof of the counterpart of the Palais-Smale condition (Lemma~\ref{lem:PalaisSmale}) for \(N \in \{1, 2\}\) relies on the condition \(\lim_{\abs{x} \to \infty} V(x) \abs{x}^2 > 0\).

The rest of the proof is the same up to minor modifications when Lemma~\ref{lemmaPositivity} is used. These modification of the argument works in fact also for \(N \ge 3\) when one assumes that \(\lim_{\abs{x} \to \infty} V(x) \abs{x}^2 > 0\).

\begin{bibdiv}
\begin{biblist}

\bib{AmbrosettiBadialeCingolani1996}{article}{
   author={Ambrosetti, Antonio},
   author={Badiale, Marino},
   author={Cingolani, Silvia},
   title={Semiclassical states of nonlinear Schr\"odinger equations with
   bounded potentials},
   journal={Atti Accad. Naz. Lincei Cl. Sci. Fis. Mat. Natur. Rend. Lincei
   (9) Mat. Appl.},
   volume={7},
   date={1996},
   number={3},
   pages={155--160},
   issn={1120-6330},
}

\bib{AmbrosettiBadialeCingolani1997}{article}{
	author={Ambrosetti, Antonio},
	author = {Badiale, Marino},
	author = {Cingolani, Silvia},
	Journal = {Arch. Rational Mech. Anal.},
	Pages = {285\ndash 300},
	Title = {Semiclassical States of Nonlinear {S}chr\"{o}dinger Equations},
	Volume = {140},
	Year = {1997}}

\bib{AmbrosettiFelliMalchiodi2005}{article}{
    AUTHOR = {Ambrosetti, Antonio},
    AUTHOR = {Felli, Veronica},
    author = {Malchiodi, Andrea},
     TITLE = {Ground states of nonlinear {S}chr\"odinger equations with
              potentials vanishing at infinity},
   JOURNAL = {J. Eur. Math. Soc. (JEMS)},
    VOLUME = {7},
      YEAR = {2005},
    NUMBER = {1},
     PAGES = {117\ndash 144},
      ISSN = {1435-9855}
}

\bib{AmbrosettiMalchiodi2006}{book}{
      author={Ambrosetti, Antonio},
      author={Malchiodi, Andrea},
       title={{Perturbation methods and semilinear elliptic problems on $\R\sp
  n$}},
      series={Progress in Mathematics},
   publisher={Birkh\"{a}user Verlag},
        date={2006},
      volume={240},
}

\bib{AmbrosettiMalchiodiRuiz2006}{article}{
   author={Ambrosetti, A.},
   author={Malchiodi, A.},
   author={Ruiz, D.},
   title={Bound states of nonlinear Schr\"odinger equations with potentials
   vanishing at infinity},
   journal={J. Anal. Math.},
   volume={98},
   date={2006},
   pages={317--348},
   issn={0021-7670},
}

\bib{BaDengPeng2010}{article}{
   author={Ba, Na},
   author={Deng, Yinbin},
   author={Peng, Shuangjie},
   title={Multi-peak bound states for Schr\"odinger equations with compactly
   supported or unbounded potentials},
   journal={Ann. Inst. H. Poincar\'e Anal. Non Lin\'eaire},
   volume={27},
   date={2010},
   number={5},
   pages={1205--1226},
   issn={0294-1449},
}

\bib{BV}{article}{
   author={Bidaut-V{\'e}ron, Marie-Fran{\c{c}}oise},
   title={Local and global behavior of solutions of quasilinear equations of
   Emden-Fowler type},
   journal={Arch. Rational Mech. Anal.},
   volume={107},
   date={1989},
   number={4},
   pages={293--324},
   issn={0003-9527},
}

\bib{DCBonheureVanSchaftingen2008}{article}{
  author = {Bonheure, Denis},
  author = {Di Cosmo, Jonathan},
  author = {Van Schaftingen, Jean},
  title = {Nonlinear Schr\"odinger equation with unbounded or vanishing potentials: solutions concentrating on lower dimensional spheres}, 
  journal = {J. Differential Equations},
  volume = {252},
  year = {2012}, 
  number = {1}, 
  pages = {941--968},
}

\bib{BonheureVanSchaftingen2006}{article}{
   author={Bonheure, Denis},
   author={Van~Schaftingen, Jean},
   title={Nonlinear Schr\"odinger equations with potentials vanishing at
   infinity},
   journal={C. R. Math. Acad. Sci. Paris},
   volume={342},
   date={2006},
   number={12},
   pages={903--908},
   issn={1631-073X},
}

\bib{BonheureVanSchaftingen2008}{article}{
      author={Bonheure, Denis},
      author={Van~Schaftingen, Jean},
       title={Bound state solutions for a class of nonlinear {S}chr\"{o}dinger
  equations},
        date={2008},
     journal={Rev. Mat. Iberoamericana},
      volume={24},
       pages={297\ndash 351},
}

\bib{delPinoFelmer1996}{article}{
   author={del~Pino, Manuel},
   author={Felmer, Patricio},
   title={Local mountain passes for semilinear elliptic problems in
   unbounded domains},
   journal={Calc. Var. Partial Differential Equations},
   volume={4},
   date={1996},
   number={2},
   pages={121--137},
   issn={0944-2669},
}

\bib{delPinoFelmer1997}{article}{
	Author = {del~Pino, Manuel},
	Author = {Felmer, Patricio},
	Journal = {J. Funct. Anal.},
	Pages = {245\ndash 265},
	Title = {Semi-classical states for nonlinear {S}chr\"{o}dinger equations},
	Volume = {149},
	Year = {1997}}

\bib{delPinoFelmer1998}{article}{
    Author = {del~Pino, Manuel},
    Author = {Felmer, Patricio},
     TITLE = {Multi-peak bound states for nonlinear Schr\"odinger equations},
   JOURNAL = {Ann. Inst. H. Poincar\'e Anal. Non Lin\'eaire},
    VOLUME = {15},
      YEAR = {1998},
    NUMBER = {2},
     PAGES = {127\ndash 149},
}

\bib{delPinoFelmer2002}{article}{
	Author = {del~Pino, Manuel},
	Author = {Felmer, Patricio},
	Journal = {Math. Ann.},
	Pages = {1\ndash 32},
	Title = {Semi-classical states of nonlinear {S}chr\"{o}dinger equations: a variational reduction method},
	Volume = {324},
	Year = {2002}}

\bib{FY}{article}{
   author={Fei, Mingwen},
   author={Yin, Huicheng},
   title={Existence and concentration of bound states of nonlinear
   Schr\"odinger equations with compactly supported and competing
   potentials},
   journal={Pacific J. Math.},
   volume={244},
   date={2010},
   number={2},
   pages={261--296},
   issn={0030-8730},
}

\bib{FloerWeinstein1986}{article}{
   author={Floer, Andreas},
   author={Weinstein, Alan},
   title={Nonspreading wave packets for the cubic Schr\"odinger equation
   with a bounded potential},
   journal={J. Funct. Anal.},
   volume={69},
   date={1986},
   number={3},
   pages={397--408},
   issn={0022-1236},
}

\bib{Kwong}{article}{
	Author = {Kwong, Man-Kam},
	Journal = {Arch. Rational Mech. Anal.},
	Pages = {243\ndash 266},
	Title = {Uniqueness of positive solutions of {\(\Delta u - u + u^p = 0\) in \(\R^N\)}},
	Volume = {105},
	Year = {1989}}

\bib{Lions1984}{article}{
      author={Lions, P.-L.},
       title={The concentration-compactness principle in the calculus of
  variations. The locally compact case.},   
   partial={
     part={part 1},  
     journal={Ann. Inst. H. Poincar\'e Anal. Non Lin\'eaire},
     volume={1},
     date={1984},
     number={2},
     pages={109--145},
     issn={0294-1449},
   },
   partial={
        part={part 2},
        date={1984},
        ISSN={0294-1449},
     journal={Ann. Inst. H. Poincar\'e Anal. Non Lin\'eaire},
      volume={1},
      number={4},
       pages={223\ndash 283},},
      
}

\bib{MW}{book}{
    AUTHOR = {Mawhin, Jean},
    AUTHOR = {Willem, Michel},
     TITLE = {Critical point theory and {H}amiltonian systems},
    SERIES = {Applied Mathematical Sciences},
    VOLUME = {74},
 PUBLISHER = {Springer},
   ADDRESS = {New York},
      YEAR = {1989},
     PAGES = {xiv+277},
      ISBN = {0-387-96908-X}
}

\bib{MorozVanSchaftingen2009}{article}{
   author={Moroz, Vitaly},
   author={Van~Schaftingen, Jean},
   title={Existence and concentration for nonlinear Schr\"odinger equations
   with fast decaying potentials},
   journal={C. R. Math. Acad. Sci. Paris},
   volume={347},
   date={2009},
   number={15-16},
   pages={921--926},
   issn={1631-073X},
}

\bib{MorozVanSchaftingen2010}{article}{
      author = {Moroz, Vitaly},
      author = {Van~Schaftingen, Jean},
      title = {Semiclassical stationary states for nonlinear {S}chr\"odinger equations with fast decaying potentials},
      journal = {Calc. Var. Partial Differential Equations},
	Pages = {1\ndash 27},
	Volume = {37},
	Year = {2010},
	Number = {1},
}

\bib{Oh1988}{article}{
   author={Oh, Yong-Geun},
   title={Existence of semiclassical bound states of nonlinear Schr\"odinger
   equations with potentials of the class \((V)_a\)},
   journal={Comm. Partial Differential Equations},
   volume={13},
   date={1988},
   number={12},
   pages={1499--1519},
   issn={0360-5302},
}

\bib{Oh1988Errata}{article}{
   author={Oh, Yong-Geun},
   title={Correction to: ``Existence of semiclassical bound states of
   nonlinear Schr\"odinger equations with potentials of the class \((V)_a\)''},
   journal={Comm. Partial Differential Equations},
   volume={14},
   date={1989},
   number={6},
   pages={833--834},
   issn={0360-5302},
}

\bib{Oh1990}{article}{
   author={Oh, Yong-Geun},
   title={On positive multi-lump bound states of nonlinear Schr\"odinger
   equations under multiple well potential},
   journal={Comm. Math. Phys.},
   volume={131},
   date={1990},
   number={2},
   pages={223--253},
}

\bib{Rabinowitz1992}{article}{
   author={Rabinowitz, Paul H.},
   title={On a class of nonlinear Schr\"odinger equations},
   journal={Z. Angew. Math. Phys.},
   volume={43},
   date={1992},
   number={2},
   pages={270--291},
   issn={0044-2275},
}

\bib{Willem1996}{book}{
    AUTHOR = {Willem, Michel},
     TITLE = {Minimax theorems},
    SERIES = {Progress in Nonlinear Differential Equations and their Applications, 24},
 PUBLISHER = {Birkh\"auser},
   ADDRESS = {Boston, Mass.},
      YEAR = {1996},
     PAGES = {x+162},
      ISBN = {0-8176-3913-6}
}

\bib{Wang1993}{article}{
   author={Wang, Xuefeng},
   title={On concentration of positive bound states of nonlinear
   Schr\"odinger equations},
   journal={Comm. Math. Phys.},
   volume={153},
   date={1993},
   number={2},
   pages={229--244},
   issn={0010-3616},
}

\bib{YinZhang2009}{article}{
   author={Yin, Huicheng},
   author={Zhang, Pingzheng},
   title={Bound states of nonlinear Schr\"odinger equations with potentials
   tending to zero at infinity},
   journal={J. Differential Equations},
   volume={247},
   date={2009},
   number={2},
   pages={618--647},
   issn={0022-0396},
}

\end{biblist}
\end{bibdiv}

\end{document}